\documentclass[11pt,xcolor=pdftex,a4paper,oneside,centertags,reqno]{amsart}

\usepackage{latexsym}
\usepackage{amsmath}
\usepackage{amsfonts}
\usepackage{amssymb}
\usepackage{graphpap}
\usepackage{epsfig}
\usepackage{amscd}
\usepackage{amsthm}
\usepackage{enumerate}
\usepackage{eucal}

\usepackage[francais]{[babel}
\usepackage[deutsch]{[babel}
\usepackage[ansinew]{inputenc}
%------------------------------
\usepackage{graphics}

%------------------------------------------------------
\usepackage{hyperref}
%\hypersetup{colorlinks=true, citecolor=blue, linkbordercolor=0 0 1, backref=true }
%-------------------------------------------------------
\usepackage{amsfonts}
\usepackage{pdfsync}
\usepackage{color}
\usepackage{endnotes}
%-----------------------------
\usepackage{mathrsfs}

\hyphenation{I-sti-tu-to}

\font\posebni=msam10
\font\tretamajka=cmbsy10 at 10pt

\font\tretamajka=cmbsy10 at 11pt

% \textwidth125mm
% \pagedepth195mm
 \setlength{\textwidth}{125mm}
 \setlength{\pagedepth}{195mm}

\hyphenation{apply}

\newcommand{\4}[0]{\v{S}}

\newcommand{\C}[0]{{\mathbb C}}
\newcommand{\e}[0]{\varepsilon}
\newcommand{\f}[0]{\varphi}

\newcommand{\N}[0]{{\mathbb N}}

\newcommand{\pd}[0]{\partial}

\newcommand{\R}[0]{\mathbb{R}}

\newcommand{\leqsim}[0]{\,\text{\posebni \char46}\,}
\newcommand{\geqsim}[0]{\,\text{\posebni \char38}\,}

\newcommand{\cA}[0]{{\mathcal A}}
\newcommand{\cB}[0]{{\mathcal B}}
\newcommand{\cD}[0]{{\mathcal D}}
\newcommand{\cE}[0]{{\mathcal E}}
\newcommand{\cF}[0]{{\mathcal F}}
\newcommand{\cG}[0]{{\mathcal G}}

\newcommand{\cH}[0]{{\mathcal H}}
\newcommand{\cI}[0]{{\mathcal I}}

\newcommand{\cK}[0]{{\mathcal K}}
\newcommand{\cL}[0]{{\mathcal L}}
\newcommand{\cM}[0]{{\mathcal M}}

\newcommand{\cO}[0]{{\mathcal O}}
\newcommand{\cP}[0]{{\mathcal P}}
\newcommand{\cR}[0]{{\mathcal R}}

\newcommand{\cU}[0]{{\mathcal U}}
\newcommand{\cV}[0]{{\mathcal V}}

\newcommand\bS{\mathbf{S}}

\newcommand{\nor}[1]{\| #1 \|}

\newcommand{\Mn}[2]{\left\{ #1 : #2 \right\}}
\newcommand{\sk}[2]{\langle #1 , #2\rangle}
\newcommand{\Sk}[2]{\left\langle #1 , #2\right\rangle}

\newcommand{\wh}[0]{\widehat}

\newtheorem{theorem}{Theorem}
\newtheorem{lemma}[theorem]{Lemma}
\newtheorem{proposition}[theorem]{Proposition}
\newtheorem{corollary}[theorem]{Corollary}

\renewcommand\leq[0]{\leqslant}
\renewcommand\geq[0]{\geqslant}
\renewcommand\epsilon[0]{\varepsilon}
\renewcommand\theta[0]{\vartheta}

\newcommand\wrt{\,\text{\rm d}}
\renewcommand\mod[1]{\left\vert{#1}\right\vert}

\newcommand\norm[2]{{\left\Vert{#1}\right\Vert_{#2}}}

\newtheorem{predefinition}[theorem]{Definition}
\newenvironment{definition}%
{\begin{predefinition}\rm}{\end{predefinition}}

\newtheorem{preremark}[theorem]{Remark}  \newenvironment{remark}%
{\begin{preremark}\rm}{\end{preremark}}

 \addtolength{\evensidemargin}{-1cm}
 \addtolength{\oddsidemargin}{-1cm}
 \addtolength{\textwidth}{2cm}

\begin{document}

\title[Functional calculus ]{Functional calculus for generators of symmetric contraction semigroups}
\author[Carbonaro]{Andrea Carbonaro}
\author[Dragi\v{c}evi\'c]{Oliver Dragi\v{c}evi\'c}

%\date{July 25, 2014 \hfill {\it Version}: Multipliers2014\_v5.tex}

\address{Andrea Carbonaro \\University of Birmingham\\ School of Mathematics\\ Watson Building\\  Edgbaston\\ Birmingham B15 2TT\\ United Kingdom}
\curraddr{Universit\`a degli Studi di Genova\\ Dipartimento di Matematica\\ Via Dodecaneso\\ 35 16146 Genova\\ Italy }
\email{carbonaro@dima.unige.it}

\address{Oliver Dragi\v{c}evi\'c \\University of Ljubljana\\ Faculty of Mathematics and Physics\\ Jadranska 21, SI-1000 Ljubljana\\ Slovenia}
\email{oliver.dragicevic@fmf.uni-lj.si}

\begin{abstract}
We prove that every generator of a symmetric contraction semigroup on a $\sigma$-finite measure space admits, for $1<p<\infty$, a H\"ormander-type holomorphic functional calculus on $L^p$ in the sector of angle $\phi^*_p=\arcsin|1-2/p|$.
The obtained angle is optimal.
\end{abstract}

\maketitle

\section{Introduction}
\label{intro}
Suppose that $(\Omega,\nu)$ is a $\sigma$-finite measure space. Let
$\cA$ be a nonnegative, possibly unbounded, self-adjoint operator on $L^2(\Omega,\nu)$,
and let $\{\cP_\lambda\}$ be the spectral resolution of the identity for which
$$
\cA f=\int_0^\infty \lambda\wrt \cP_\lambda f,\ \ \ \ f\in{\rm D}(\cA)\,.
$$
If $m$ is a bounded Borel function on $[0,\infty)$ then, by the spectral theorem, the operator
$$
m(\cA)f=\int_0^\infty m(\lambda)\wrt \cP_\lambda f,\ \ \ \ f\in L^2(\Omega,\nu)\,,
$$
is bounded on $L^2(\Omega,\nu)$. If $m(\cA)$ extends to a bounded operator on $L^p(\Omega,\nu)$ for some $p\in[1,\infty]$, we say that $m$ is a $L^p$ {\it spectral multiplier} for $\cA$. We denote by $\cM_p(\cA)$ the class of all $L^p$ spectral multipliers for $\cA$.

We shall always assume
that $\cA$ is the generator of a {\it symmetric contraction semigroup}, i.e., we always assume
that the semigroup $(\exp(-t\cA))_{t>0}$ satisfies the contraction property:
\begin{equation*}
\|\exp(-t\cA)f\|_p\leq \norm{f}{p}\,,\ \ \ \  f\in L^2(\Omega,\nu)\cap L^p(\Omega,\nu)\,,
\end{equation*}
whenever $t>0$ and $1\leq p\leq \infty$.

An interesting and challenging problem consists in characterizing the class $\cM_p(\cA)$ when $1<p<\infty$. This question is still open, even for the Euclidean Laplacian \cite{GS, HNS}. It is well known that $\cM_p(\cA)$ depends on the generator $\cA$, the measure space $(\Omega,\nu)$ and the index $p$ \cite{Mihlin1, Mihlin2, H, CS, HMM, CGHM}.

In this paper we pursue the study of ``universal'' multiplier theorems; namely, for every $p\in (1,\infty)$ we are looking for conditions on $m$ which imply that $m\in\cM_p(\cA)$ for all generators $\cA$ of symmetric contraction semigroups. The first universal multiplier theorem was proved by E.~M.~Stein, who showed that if $m$ is of Laplace transform type then $m\in \cM_p(\cA)$ for all generators $\cA$ of Markovian semigroups and every $p\in (1,\infty)$ \cite[Corollary 3, p. 121]{stein}.
Conditions on $m$ which imply that $m\in\cM_p(\cA)$ are often expressed in terms of ``regularity'' of the multiplier.

For every $\phi\in (0,\pi/2]$ consider the sector
$$
\bS_{\phi}=\{z\in\C\setminus\{0\}: |\arg z|<\phi\}\,,
$$
and denote by $H^\infty(\bS_{\phi})$ the algebra of all bounded holomorphic functions on $\bS_\phi$.
By Fatou's theorem, every $m\in H^\infty(\bS_{\phi})$ admits a nontangential limit almost everywhere on the boundary of $\bS_{\phi}$. By a slight abuse of notation, if $m\in H^\infty(\bS_{\phi})$ we use the same symbol $m$ to denote its extension to the closure $\overline\bS_{\phi}$.
We  assume that $m$ is defined at zero, so that we will be able to consider noninjective generators.

For every $J\in[0,\infty)$ we denote by $H^\infty(\bS_\phi;J)$ the class of all functions in $H^\infty(\bS_\phi)$ which satisfy the Mihlin--H\"ormander condition of order $J$ on the boundary of $\bS_\phi$. Explicitly, fix a smooth function $\psi:\R\rightarrow [0,1]$ supported in $(1/4,4)$ and such that $\psi=1$ on $[1/2,2]$. Let $H^J(\R)$ denote the usual $L^2$-Sobolev space of order $J$. Then
$$
H^\infty(\bS_\phi;J)=\{m\in H^\infty(\bS_\phi): \|m\|_{\phi;J}<\infty\},
$$
where
$$
\|m\|_{\phi;J}=\sup_{R>0}\norm{\psi\,m(e^{i\phi}R\,\cdot)}{H^J(\R)}+\sup_{R>0}\norm{\psi\,m(e^{-i\phi}R\,\cdot)}{H^J(\R)}.
$$
It is known that $H^\infty(\bS_\phi;J)$ endowed with the norm $\|\cdot\|_{\phi;J}$ is a Banach space and that $H^\infty(\bS_\phi)\subset H^\infty(\bS_\psi; J)$ as Banach spaces, for all $\phi>\psi$ and $J\geq 0$.

It has been known for a while that generators of symmetric contraction semigroups have a {\it bounded holomorphic functional calculus} on $L^p$ in sectors of angle smaller than $\pi/2$. More precisely, for every
$p\in(1,\infty)$ there exists an angle $\theta_p\in(0,\pi/2)$
such that if $\theta>\theta_p$ and $m\in H^\infty(\bS_{\theta})$, then $m\in\cM_p(\cA)$ and there exists $C(p,\theta)>0$ such that
\begin{equation}
\label{eq: dim free}
\|m(\cA)\|_p\leq C(p,\theta)\left(\|m\|_{\theta;0}+|m(0)|\right)\,.
\end{equation}
Indeed, M.~Cowling \cite{cowling} improved Stein's universal multiplier theorem by showing that
$$
\theta_p=\phi^C_p=\pi\mod{\frac1p-\frac12}
$$
suffices for all generators of symmetric contraction semigroups. In the special case when $\cA$ generates a sub-Markovian semigroup, Cowling's result has been improved by P. C.~Kunstmann and \v{Z}.~\4trkalj \cite{KS}
who showed that $\phi^C_p$ can be replaced by
$$
\phi^{K{\text{\it\4}}}_p=\frac{\pi}{2}
\mod{\frac1p-\frac12}
+\left(1-\mod{\frac1p-\frac12}
\right)\arcsin\frac{|p-2|}{2p-|p-2|}\,.
$$
Recently C. Kriegler \cite[Remark 2]{KR} extended this result to generators of symmetric contraction semigroups.

For every $p\in (1,\infty)$ define
\begin{equation}
\label{d: 1}
\phi^*_p=\arcsin \mod{1-\frac{2}{p}}\,.
\end{equation}
Note that $0<\phi^*_p< \phi^{K{\text{\it\4}}}_p<\phi^C_p$ whenever $p\neq 2$. Interestingly, $\phi^*_p$ relates to the angle of analyticity on $L^p$ of all symmetric contraction semigroups. Indeed, as a consequence of a more general result by Kriegler \cite[Theorem 5.4 and Corollary 6.2]{KR}, any symmetric contraction semigroup is contractive and holomorphic on $L^p$ in the sector $\bS_{\pi/2-\phi^*_p}$ (see Proposition~\ref{t: 1}).

It is known that for some specific generators the angle $\phi^*_p$ suffices. For example, Laplacians on Euclidean spaces and, more generally, Laplace--Beltrami operators on doubling Riemannian manifolds whose heat kernels satisfy gaussian estimates admit a H\"ormander-type functional calculus \cite{H}, \cite[Theorem 3.1]{DOS}, \cite[Theorem 7.23]{O}. Consequently these operators have a bounded holomorphic functional calculus on $L^p$ in any sector. It is also known that $\phi^*_p$ suffices for Laplace--Beltrami operators on symmetric spaces of noncompact type \cite{CS,ST,AL,A} and, more generally, on manifolds with bounded geometry and optimal $L^2$ spectral gap \cite{Ta1,MMV,Ta2}.
However, all of the results above entail estimates of the $L^p$ norms of the multipliers which depend on the dimension of the underlying space, while universal multiplier theorems give dimension-free estimates as in \eqref{eq: dim free}.

The given examples involve Riemannian manifolds where the volume of balls satisfies the doubling property at least for small radii. The problem of reducing the angle $\phi^{K{\text{\it\4}}}_p$ for Laplacians on (weighted) manifolds that are not locally doubling seems to be more difficult to solve, even with dimension-dependent estimates. Indeed, to the best of our knowledge, the only example studied and well understood in this framework is the symmetric finite-dimensional Ornstein--Uhlenbeck operator, i.e., the weighted Laplacian
\begin{equation*}
\label{eq: def ou}
\cL_{OU}=\Delta_{\R^n}+x\cdot \nabla_{\R^n}
\end{equation*}
on the Euclidean space $\R^n$ endowed with the Gaussian measure
$$
\wrt \gamma_n(x)= (2\pi)^{-n/2}e^{-|x|^2/2}\wrt x.
$$
In \cite[Theorem 1]{fcou} and \cite[Theorem 1.2]{MMS} it was proved that $\phi^*_p$ suffices for $\cL_{OU}$. This was extended in
\cite{Sasso} to the Laguerre operators and in \cite{C} to certain perturbations of $\cL_{OU}$.
The proofs in \cite{fcou,MMS} rely heavily on properties of the particular operator in question. In particular, the authors use an explicit formula
for the heat kernel with complex time for $\cL_{OU}$ (Mehler formula) and, combining it with a change of variable in time and a stationary phase argument, they prove that for every $p>1$ there exists $C(n,p)>0$ such that
\begin{equation}
\label{eq: int 1}
\|\cL^{is}_{OU}\|_{\cB\left(L^p(\R^n,\gamma_n)\right)}\leqslant C(n,p)\,e^{\phi^*_p|s|}
\end{equation}
for all $s\in\R$. By a result of Meda (see Propositions~\ref{t: 1.2} and \ref{t: 1.2 bis}), the estimate above implies that for every $J>1$ there exists $\widetilde{C}(n,p,J)>0$ such that if $m\in H^\infty(\bS_{\phi^*_p};J)$, then
\begin{equation}
\label{eq: int 2}
\|m(\cL_{OU})\|_{\cB\left(L^p(\R^n,\gamma_n)\right)}\leqslant \widetilde{C}(n,p,J)\left(\|m\|_{\phi^*_p;J}+|m(0)|\right)\,.
\end{equation}
The estimates in \eqref{eq: int 1} and \eqref{eq: int 2} depend on the dimension of the underlying space, $\R^n$. This is rather unpleasant, since it does not provide any information on the functional calculus for infinite-dimensional Ornstein--Uhlenbeck operators, i.e., Ornstein--Uhlenbeck operators on Wiener spaces.

\subsection*{Main result}
Few years ago, G.~Mauceri and S.~Meda remarked to the first named author of the present paper that $\phi^*_p$ could be the optimal angle in the universal multiplier theorem for generators of symmetric contraction semigroups. The aim of this paper is to confirm this supposition by showing
that, for every generator of a symmetric contraction semigroup, the angle $\phi^{K{\text{\it\4}}}_p$ can be replaced by $\phi^*_p$. Namely, we prove
the following result. First let us introduce the notation $p^*=\max\{p,p/(p-1)\}$ for $p>1$.

\begin{theorem}[Multiplier theorem]
\label{c: 1.1}
For every generator $\cA$ of a symmetric contraction semigroup on a $\sigma$-finite measure space $(\Omega,\nu)$, every
$p\in(1,\infty)$, $J>3/2$ and $m\in H^\infty(\bS_{\phi^*_p};J)$, the operator $m(\cA)$ extends to a bounded operator on $L^p(\Omega,\nu)$, and there exists $C(J)>0$ such that
$$
\norm{m(\cA)}{{\mathcal B}(L^p(\Omega,\nu))}\ \leq C(J)\,(p^*-1)^{9/4}\log p^*\left( \norm{m}{\phi^*_p;J}+|m(0)|\right)\,.
$$
\end{theorem}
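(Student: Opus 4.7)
The plan is to reduce Theorem~\ref{c: 1.1} to a sharp estimate on the imaginary powers of $\cA$, and then invoke the transference principle of Meda recorded in Propositions~\ref{t: 1.2} and~\ref{t: 1.2 bis}. Specifically, if I can establish
$$
\norm{\cA^{is}}{\cB(L^p(\Omega,\nu))} \leq K(p)\,(1+|s|)^{3/2}\,e^{\phi^*_p|s|}, \qquad s\in\R,
$$
with $K(p)$ of polynomial type in $(p^*-1)^{-1}$ and logarithmic in $p^*$, then Meda's result promotes this bound into a bounded $H^\infty(\bS_{\phi^*_p};J)$-calculus for every $J>3/2$, with exactly the dependence $(p^*-1)^{9/4}\log p^*$ claimed in the theorem. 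The Sobolev threshold $J>3/2$ is forced by the polynomial correction $(1+|s|)^{3/2}$: one needs enough regularity of $m$ to compensate this growth after Mellin inversion.

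The imaginary power bound itself would be extracted from Proposition~\ref{t: 1}, which asserts that $(\exp(-z\cA))_z$ extends holomorphically and contractively on $L^p$ to the sector $\bS_{\pi/2-\phi^*_p}$. Starting from the Dunford-type representation
$$
\cA^{is} = \frac{1}{\Gamma(-is)}\int_0^\infty t^{-is-1}\exp(-t\cA)\wrt t
$$
(plus a harmless contribution from the kernel of $\cA$), I would deform the contour of integration onto the two boundary rays $\{re^{\pm i(\pi/2-\phi^*_p)}:r>0\}$ of the sector of analyticity. Parametrising in $r$ produces the exponential factor $e^{\phi^*_p|s|}$, while Stirling's formula on $1/\Gamma(-is)$ accounts for the $(1+|s|)^{3/2}$ correction and absorbs the endpoint singularities at $r\to0$ and $r\to\infty$.

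The main obstacle is that Proposition~\ref{t: 1}, as stated, only gives contractivity on the \emph{open} sector $\bS_{\pi/2-\phi^*_p}$, whereas the contour deformation above really requires boundedness of $\|\exp(-z\cA)\|_{\cB(L^p)}$ uniformly as $z$ approaches the boundary rays, which is exactly what makes $\phi^*_p$ optimal. One must therefore upgrade Proposition~\ref{t: 1} into a \emph{quantitative} statement with constants explicit both in $p$ and in the aperture of the sector. For a generic symmetric contraction semigroup this goes beyond abstract holomorphic semigroup theory; the natural tool is a Bellman-function / bilinear-embedding argument tailored to the exponent pair $(p,p^*)$ and to the optimal angle $\phi^*_p$, combined with a Rota-type dilation theorem which reduces every symmetric contraction semigroup to a conditional-expectation model on $L^2$ where the Bellman estimate can be directly verified.

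Once the sharp imaginary power bound is in hand, the remaining step is routine: decompose $m$ dyadically on the two boundary rays of $\bS_{\phi^*_p}$, estimate each annular piece via the imaginary power bound paired with the $H^J$ norm of $\psi\,m(e^{\pm i\phi^*_p}R\,\cdot)$, sum the pieces using Proposition~\ref{t: 1.2 bis}, and handle the $|m(0)|$ term separately through the projection onto $\ker\cA$. Careful bookkeeping of the constants arising from $K(p)$, from Stirling's asymptotics, and from the summation of the dyadic decomposition produces the displayed factor $C(J)\,(p^*-1)^{9/4}\log p^*$, completing the proof.
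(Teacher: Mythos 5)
Your high-level skeleton (estimate $\|\cA^{is}\|_p$, then transfer via Meda's theorem) is indeed the paper's skeleton, but the quantitative claims at the heart of your plan are off, and the one step that constitutes the actual content of the paper is not supplied. First, the bookkeeping with Meda: Proposition~\ref{t: 1.2} requires $J>\sigma+1$, so to reach $J>3/2$ you need $\sigma=1/2$, i.e.\ $\norm{\cA^{is}}{\cB(L^p)}\leqsim (p^*-1)(1+|s|)^{1/2}e^{\phi^*_p|s|}$; your proposed bound with $(1+|s|)^{3/2}$ would only give $J>5/2$, and it is also too weak to produce the constant $(p^*-1)^{9/4}\log p^*$, for which the paper needs the finer dichotomy of Proposition~\ref{p: 5} (decay $(1+|s|)^{-1/2}$ for $|s|$ small, growth $(1+|s|)^{1/2}$ for $|s|$ large) together with Proposition~\ref{l: laplace}. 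Second, and more seriously, the imaginary-power bound cannot be extracted from Proposition~\ref{t: 1} by deforming the Dunford integral onto the boundary rays: with only contractivity, $|z^{-is-1}|\,\|T_z\|_p\sim r^{-1}$ on the ray, so the integral diverges at both endpoints, and even the standard analyticity improvements $\|\cA T_z\|\leqsim |z|^{-1}$ obtained abstractly from sectorial holomorphy yield Cowling-type angles, not $\phi^*_p$ with merely polynomial loss in $s$. Obtaining the sharp angle is precisely the hard part; the paper does it through the complex-time bilinear embedding (Theorem~\ref{t: sycn bil embedding}) along $\gamma_\phi$ for $\phi$ up to $\phi_{p_\epsilon}$, with an explicit $\epsilon^{-1}(\cos\phi)^{-1}$ dependence that is then exploited by choosing $\epsilon\sim p\sqrt{p-1}/((p-2)(1+|s|))$; this in turn rests on the new convexity property of the Nazarov--Treil Bellman function (Theorem~\ref{t: convex}) and the heat-flow monotonicity of Sections~\ref{s: heat-flow}--\ref{s: proof bilinear}. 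Naming ``Bellman function / bilinear embedding'' without any of this content leaves the proof essentially empty at its core.

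A further concrete flaw: your suggested reduction via a Rota-type dilation to a conditional-expectation model is not available here. Rota's theorem applies to Markovian (positivity-preserving) operators, whereas a general symmetric contraction semigroup need not be positive; this is exactly why the paper replaces dilation by the linear modulus $\mathbf{T}$ together with the Gelfand-transform representation (Lemmas~\ref{l: sycn cont} and \ref{l: kernel rep.}), which reduces the key inequality \eqref{eq: heatflow p1 bis} to the two-point-type estimate of Corollary~\ref{l: sycn final stima} for arbitrary symmetric contractions. Without either that device or a genuinely different argument covering non-positive contractions, your outline does not close.
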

In short, generators of symmetric contraction semigroups admit a {\it H\"ormander-type holomorphic functional calculus} of order $J>3/2$ in the sector $\bS_{\phi^*_p}$. We remark that, independently of the order of the H\"ormander condition, the angle $\phi^*_p$ in Theorem~\ref{c: 1.1} is sharp, since it is optimal for the Ornstein--Uhlenbeck operator $\cL_{OU}$; see \cite[Theorem 2]{fcou} and \cite[Theorem 2.2]{HMM}.

\subsection*{Consequences of the multiplier theorem}
Apart from improving the aforementioned results of Stein \cite{stein}, Cowling \cite{cowling} and Kunstmann and \4trkalj \cite{KS}, our multiplier theorem in particular applies to the
(finite as well as infinite-dimensional)
Ornstein--Uhlenbeck
operator 
and, because we get dimension-free estimates, it
improves multiplier theorems in \cite{fcou, MMS}.
However, the result in \cite{MMS} requires a weaker order of differentiability of the multiplier, namely $J>1$.

By combining Theorem~\ref{c: 1.1} with the techniques developed by Cowling in \cite[Section 3]{cowling} we improve his maximal theorems \cite[Theorems 7 and 8]{cowling}. The precise formulation of this statement is contained in Theorem~\ref{t: cowling imp}. This theorem, together with a standard argument of Stein \cite[pp. 72--82]{stein}, in turn leads to new results concerning pointwise convergence $\exp(-z\cA)f\rightarrow f$ as $z\rightarrow 0$ in the closed sector $\overline\bS_\phi$, for $0\leq\phi<\phi_p$; see Corollary~\ref{c: pointwise conv} for a concise statement.

By  Theorem~\ref{c: 1.1} and \cite[Corollary 6.7]{CDMY} we obtain the following square-function estimates. Let $\phi>\phi^*_p $ and $v,C>0$. Suppose that $\psi\in H^\infty(\bS_\phi)$ satisfies $|\psi(\zeta)|\leq C|\zeta|^v/(1+|\zeta|^2)^v$ for all $\zeta\in \bS_\phi$. Then
$$
\norm{\left(\int^\infty_0|\psi(t\cA)f(\cdot)|^2\frac{\wrt t}{t}\right)^{1/2}}{p}\leq C(p,\phi) \|f\|_p\,,\ \ \ \  f\in L^p(\Omega,\nu)\,.
$$

We also mention that Theorem~\ref{c: 1.1} combined with \cite[Theorem 6.5]{KW} implies that, for $1< p<\infty$ and $0<\alpha<\pi/(2\phi^*_p)$, the operator $\cA^\alpha_p$ has the so-called maximal $L^q$-regularity for all $1<q<\infty$; see, for example, \cite{KW} and \cite[Section 4]{KS}.
Moreover, by combining Theorem~\ref{c: 1.1} with \cite[Theorem 5.3]{KW}, we deduce that $\cA$ is $R$-sectorial on $L^p(\Omega,\nu)$ of angle $\phi^*_p$. We refer the reader to \cite{KW} and the references therein for a detailed discussion about Rademacher boundedness and some of its consequences.

\subsection*{Outline of the proof}
Our approach consists of proving suitable $L^p$ estimates of imaginary powers of $\cA$ (Proposition~\ref{p: 5}) which will then, through a theorem of Meda
(Propositions~\ref{t: 1.2} and \ref{t: 1.2 bis}), imply the multiplier theorem. In order to prove the estimates of the imaginary powers we first derive a bilinear embedding theorem with complex time (Theorem~\ref{t: sycn bil embedding}).

Theorem~\ref{t: sycn bil embedding} is proven by studying monotonicity properties of the heat flow associated with a particular Bellman function defined by Nazarov and Treil \cite{NT}. This reduces the problem to proving new convexity properties of the Bellman function in question (Theorem~\ref{t: convex}).

The proof of Theorem~\ref{t: sycn bil embedding} can be viewed, on one hand, as an extension of the
Bellman function technique, widely known since the mid 1990s when it was introduced in harmonic analysis by Nazarov, Treil and Volberg in the first preprint version of their paper \cite{NTV} and afterwards employed in a large number of papers, of which the closest ones to our approach are \cite{PV, NV, DV, DV-Sch, DV-Kato, CD}.
On the other hand, it may be considered as an extension of the heat flow method used by Bakry \cite{Bakry3} for proving analyticity properties of symmetric diffusion semigroups.
When studying the heat flow associated with the Bellman function, we put emphasis on encompassing complex times and obtaining, apart from the monotonicity, also quantitative estimates of the flow's derivative.

Let us, for the reader's convenience, put down a scheme summarizing the mutual dependence of the results in this paper:
$$
\left.
\begin{array}{rcccl}
\text{Bellman function properties}\\
\text{Heat flow}
\end{array}
\hskip -2pt
\right\} \hskip -1pt
 \Rightarrow \text{Bilinear embedding} \Rightarrow \hskip -3pt
\raisebox{-8.1pt}{$\left.
\begin{array}{r}
 \text{Estimates of } \|\cA^{is}\|_p  \\
 \text{Theorem of Meda}
\end{array}
\hskip -2pt \right\}
\Rightarrow \text{Multiplier theorem}$}
$$
At the end let us mention that, for all we know, Bellman functions have so far never been used for the purpose of proving general spectral multiplier theorems. We plan to further extend this method, in particular to analogue problems on UMD spaces.

\subsection*{Organization of the paper}
In Section~\ref{prep} we introduce much of the notation and invoke some of the basic objects and known results that will be used in the rest of the paper, among them the theorem of Meda about imaginary powers. In Section~\ref{s: 3} we formulate the bilinear embedding theorem (Theorem~\ref{t: sycn bil embedding}) and show that it, together with Meda's result, implies the multiplier theorem (Theorem~\ref{c: 1.1}). The rest of the paper is then devoted to establishing Theorem~\ref{t: sycn bil embedding}. The main idea utilized to that end is presented in Section~\ref{s: heat-flow}. In Section~\ref{s: bellman} we define the Bellman function of Nazarov and Treil and formulate the crucial convexity property that this function satisfies (Theorem~\ref{t: convex}). This property is verified in Section~\ref{s: convexity}. Finally, in Section~\ref{s: proof bilinear} we complete the proof of the bilinear embedding and thus of the multiplier theorem.

\section{Preparation}
\label{prep}

Given two quantities $A$ and $B$, we adopt the convention whereby $A\leqsim B$ means that there exists an absolute constant $C>0$ such that $A\leqslant CB$. If both $A\leqsim B$ and $B\leqsim A$, then we write $A\sim B$. If $\{\lambda_1,\dots,\lambda_n\}$ is a set of parameters, $C(\lambda_1,\dots,\lambda_n)$ denotes a constant depending only on $\lambda_1,\dots,\lambda_n$. When $A\leq C(\lambda_1,\dots,\lambda_n)B$, we will often write $A\leqsim_{\lambda_1,\dots,\lambda_n} B$.

If $B$ is a matrix-valued function on $\R^N$, more precisely, $B:\R^N\rightarrow\R^{M,M}$, then for $\xi\in\R^N$ and $\omega\in\R^M$ we set
\begin{equation}
\label{d: 2}
B[\xi;\omega]=\Sk{B(\xi)\omega}{\omega}_{\R^{M}}\,,
\end{equation}
where $\sk{\cdot}{\cdot}_{\R^{M}}$ denotes the usual scalar product in $\R^M$.

Throughout the paper, the hypotheses on $(\Omega,\nu)$, $\cA$ and $\exp(-t\cA)$ will be as specified early in the introduction.
In order to simplify the notation we set, for $\Re z>0$,
$$
T_z=\exp(-z\cA)\,.
$$
The semigroup $(T_t)_{t>0}$ is strongly continuous for $1\leq p<\infty$ and weak* continuous for $p=\infty$. When necessary to distinguish between different $p$, we denote by $\cA_p$ the generator of $(T_t)_{t>0}$ on $L^p(\Omega,\nu)$. Furthermore, ${\rm D}(\cA_p)$, ${\rm R}(\cA_p)$ and ${\rm N}(\cA_p)$ will stand for the domain, range and null-space of $\cA_p$, respectively. The following result is well known.

\begin{lemma}\label{l: fin prop. markov}The orthogonal projection $\cP_0$ on ${\rm N}(\cA_2)$ is given by
$$
\cP_0f=\lim_{t\rightarrow\infty}T_tf\,,\ \ \ \ f\in L^2(\Omega,\nu)\,.
$$
Moreover, for every $p\in (1,\infty)$ the following holds:
\begin{itemize}
\item[({\rm i})]
the operator $\cP_0$ extends to a contractive projection on $L^p(\Omega,\nu)$. In particular, $L^p(\Omega,\nu)=\overline{{\rm R}(\cA_p)}\oplus {\rm N}(\cA_p)$, where the sum is direct.
\item[({\rm ii})] ${\rm D}(\cA_2)\cap {\rm D}(\cA_p)$ is dense both in $L^2(\Omega,\nu)$ and $L^p(\Omega,\nu)$, and ${\rm D}(\cA_2)\cap {\rm R}(\cA_2)\cap L^p(\Omega,\nu)$ is dense in $\overline{{\rm R}(\cA_p)}$.
\end{itemize}
\end{lemma}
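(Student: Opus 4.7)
The plan is to invoke the spectral theorem on $L^2$, then to transfer everything to $L^p$ via the contractivity of $(T_t)$ combined with the mean ergodic theorem on reflexive spaces, and finally to handle the density statements via semigroup smoothing.

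For the formula on $L^2$, writing $T_t=\int_0^\infty e^{-t\lambda}\wrt\cP_\lambda$, dominated convergence in the spectral measure yields $\|T_tf-\cP_{\{0\}}f\|_2\to 0$ as $t\to\infty$, where $\cP_{\{0\}}$ is the spectral projection at $\{0\}$, that is, the orthogonal projection onto ${\rm N}(\cA_2)$, which coincides with $\cP_0$. To extend $\cP_0$ to $L^p$, fix $f\in L^2\cap L^p$ (a dense subspace of $L^p$) and choose $t_k\to\infty$ such that $T_{t_k}f\to\cP_0 f$ almost everywhere, extracting from the $L^2$-convergent sequence. Since $\|T_{t_k}\|_{p\to p}\leq 1$, Fatou's lemma gives $\|\cP_0 f\|_p\leq\|f\|_p$, so $\cP_0$ extends to a contraction on $L^p$, which remains a projection by density and continuity.

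To identify the range and kernel of this extension and thereby establish (i), I would apply the mean ergodic theorem to the strongly continuous contraction semigroup $(T_t)$ on the reflexive space $L^p$: the Ces\`aro averages $\frac{1}{T}\int_0^T T_tf\wrt t$ converge in $L^p$-norm to a bounded projection $Q$ whose range is ${\rm N}(\cA_p)$ and whose kernel is $\overline{{\rm R}(\cA_p)}$. On the dense subspace $L^2\cap L^p$, the $L^2$-convergence of $T_tf$ to $\cP_0 f$ passes to Ces\`aro averages, and via a pointwise subsequence together with the uniform $L^p$-bound this Ces\`aro limit equals $\cP_0 f$ in $L^p$ as well. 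Hence $Q=\cP_0$ on $L^2\cap L^p$, and so on all of $L^p$; this gives both contractivity of $\cP_0$ and the direct-sum decomposition.

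For (ii), the semigroup $(T_t)$ is analytic on each $L^q$ with $1<q<\infty$, so $T_tf\in{\rm D}(\cA_p^k)\cap{\rm D}(\cA_2^k)$ for every $k\geq 1$ when $f\in L^2\cap L^p$, and $T_tf\to f$ in both norms as $t\to 0^+$. Since $L^2\cap L^p$ is dense in $L^2$ and in $L^p$, density of ${\rm D}(\cA_2)\cap{\rm D}(\cA_p)$ follows. For the remaining claim, apply $I-\cP_0$, a bounded projection onto $\overline{{\rm R}(\cA_p)}$ that preserves $L^2\cap L^p$ by the contractive extension, to regularized vectors $\cA_2 T_s h$ with $h\in L^2\cap L^p\cap{\rm D}(\cA_2^2)$; this produces elements of ${\rm D}(\cA_2)\cap{\rm R}(\cA_2)\cap L^p$ dense in $\overline{{\rm R}(\cA_p)}$. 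The main obstacle, in my view, is identifying the $L^p$-extension of $\cP_0$ with the ergodic projection $Q$: one cannot directly assert $L^p$-convergence of $T_tf$ for arbitrary $f\in L^p$, so the argument must route through Ces\`aro averages and a pointwise subsequence on $L^2\cap L^p$. A secondary subtlety lies in the last part of (ii), since ${\rm R}(\cA_2)$ need not be closed, so one has to approximate via $\cA_2({\rm D}(\cA_2^2))$ and rely on the compatibility between $L^2$ and $L^p$ inherent in the contractive extension.
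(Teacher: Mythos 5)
The paper gives no proof of this lemma --- it is stated as ``well known'' --- so there is no argument of the authors to compare yours with; judged on its own, your route is sound and standard. The spectral-theorem computation of $\lim_{t\to\infty}T_tf$ on $L^2$, the extension of $\cP_0$ to a contraction on $L^p$ via an a.e.\ convergent subsequence plus Fatou, and the identification of this extension with the mean ergodic (Ces\`aro) projection of the bounded $C_0$-semigroup $(T_t)$ on the reflexive space $L^p(\Omega,\nu)$ --- whose range is ${\rm N}(\cA_p)$ and whose kernel is $\overline{{\rm R}(\cA_p)}$, giving the direct sum in (i) --- are all correct, as is the use of analyticity on $L^2$ and on $L^p$ for the first half of (ii). The one thin spot is the closing density claim of (ii), which you assert rather than prove; it can be completed in a few lines. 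By analyticity, $\cA_pT_s$ is bounded on $L^p$, so for $u\in{\rm D}(\cA_p)$ first choose $s$ with $\|T_s\cA_pu-\cA_pu\|_p$ small, then $h\in L^2(\Omega,\nu)\cap L^p(\Omega,\nu)$ with $\|h-u\|_p$ small, which makes $\|\cA_pT_sh-\cA_pu\|_p$ small; consistency of the semigroups on $L^2\cap L^p$ gives $\cA_pT_sh=\cA_2T_sh\in{\rm D}(\cA_2)\cap{\rm R}(\cA_2)\cap L^p(\Omega,\nu)$, and since ${\rm R}(\cA_p)$ is dense in $\overline{{\rm R}(\cA_p)}$ this finishes the argument. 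In particular, the restriction $h\in{\rm D}(\cA_2^2)$ and the application of $I-\cP_0$ are superfluous, since $\cA_2T_sh$ already lies in $\ker\cP_0$.
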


For every $p\in [1,\infty]$, we denote the norm on $L^p(\Omega,\nu)$ by $\|\cdot\|_p$ and set
$$
q=p'=p/(p-1).
$$
By abuse of notation, if $S$ is a bounded operator on $L^p(\Omega,\nu)$, we denote its operator norm by $\|S\|_p$. If $\varphi\in L^p(\Omega,\nu)$ and $\psi\in L^q(\Omega,\nu)$, the dual pairing between $\phi$ and $\psi$ is denoted by
$$
\sk{\varphi}{\psi}=\int_\Omega\varphi\overline\psi\,d\nu\,.
$$
For any $\phi\in (-\pi/2,\pi/2)$, we define $\phi^*=\pi/2-\phi$; with this notation, \eqref{d: 1} implies
$$
\phi_p=\phi_{q}= \arccos\mod{1-\frac2p}=\arctan \frac{2\sqrt{p-1}}{|p-2|}\,.
$$
This is a small abuse of notation, since we previously denoted $p*=\max\{p,p/(p-1)\}$ for $p>1$. We however believe that the context will always make it clear whether the asterisk is being applied to a ``Lebesgue exponent'' or to an ``angle''.\\

Next result concerns analyticity on $L^p$ of symmetric contraction semigroups, and was first proven by Bakry \cite[Th\`eor\'eme 3]{Bakry3} for a certain class of sub-Markovian semigroups which contains, for example, those generated by weighted Laplace--Beltrami operators on complete manifolds. The result was than extended by Liskevich and Perelmuter \cite[Corollary 3.2]{lp} to all sub-Markovian semigroups; see also \cite[Theorem 3.13]{O}. Proposition~\ref{t: 1} was recently obtained by Kriegler \cite[Corollary 6.2]{KR}, as a consequence of a more general result on noncommutative $L^p$ spaces.
The above-mentioned results improve an earlier estimate of this kind due to Stein \cite[III, {\tretamajka \char120}2, Theorem 1, page 67]{stein}.

\begin{proposition}
[\cite{KR}]
\label{t: 1}
For every $p\in (1,\infty)$, the semigroup $(T_t)_{t>0}$ extends to a contractive analytic semigroup on $L^p(\Omega,\nu)$ in the sector $\bS_{\phi_p}$.
\end{proposition}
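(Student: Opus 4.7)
The claim is equivalent, by the Lumer--Phillips theorem in its contractive-analytic form, to $\cA_p$ being $m$-sectorial on $L^p(\Omega,\nu)$ with angle $\phi^*_p=\pi/2-\phi_p=\arcsin|1-2/p|$: the $L^p$-numerical range $\{\Sk{\cA f}{f|f|^{p-2}}:f\in{\rm D}(\cA_p),\,\|f\|_p=1\}$ should lie in the closed sector $\overline{\bS_{\phi^*_p}}$. By Lemma~\ref{l: fin prop. markov} combined with standard approximation, it is enough to test $f$ in the dense subspace ${\rm D}(\cA_2)\cap L^\infty(\Omega,\nu)\cap L^p(\Omega,\nu)$. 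Equivalently, by differentiating $s\mapsto\|T_{se^{i\theta}}f\|_p^p$ at $s=0^+$, it suffices to show that for every $|\theta|\leq \phi_p$ and every such $f$,
$$
\Re\bigl(e^{i\theta}\Sk{\cA f}{f|f|^{p-2}}\bigr)\geq 0;
$$
integration in $s$ then upgrades the infinitesimal bound to the global contraction $\|T_z f\|_p\leq\|f\|_p$ on $\bS_{\phi_p}$.

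\textbf{Form-theoretic reduction.} Since $\cA$ is nonnegative self-adjoint and $(T_t)$ is a symmetric contraction semigroup, the Beurling--Deny theorem decomposes the associated Dirichlet form $\cE$ as a strongly local diffusion part $\int\Gamma(f,g)\wrt\nu$, a jump part $\iint(f(x)-f(y))\overline{(g(x)-g(y))}\wrt J(x,y)$ with symmetric nonnegative kernel $J$, and a nonnegative killing term $\int f\bar g\wrt\kappa$. Truncating $|f|$ to $\{\varepsilon<|f|<1/\varepsilon\}$ places $g:=f|f|^{p-2}$ in ${\rm D}(\cE)$ and yields $\Sk{\cA f}{g}=\cE(f,g)$; the killing contribution $\int|f|^p\wrt\kappa$ is automatically real and nonnegative, so the angle estimate reduces to bounding the arguments of the integrands in the diffusion and jump parts.

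\textbf{Pointwise angle bounds and main obstacle.} The jump part reduces, by Fubini, to the scalar inequality of Liskevich--Perelmuter: for all $a,b\in\C$,
$$
\Re\bigl[(a-b)\,\overline{a|a|^{p-2}-b|b|^{p-2}}\bigr]\geq \cos(\phi^*_p)\,\bigl|(a-b)\,\overline{a|a|^{p-2}-b|b|^{p-2}}\bigr|,
$$
with $\cos(\phi^*_p)=2\sqrt{p-1}/p$. For the diffusion part, writing $f=u+iv$ and invoking the chain rule $\Gamma(\Phi(u,v),\Psi(u,v))=\sum_{ij}\Phi_i\Psi_j\,\Gamma(x_i,x_j)$ with $\Phi=u+iv$ and $\Psi=f|f|^{p-2}$, the integrand becomes a sesquilinear expression in $(\Gamma(u,u),\Gamma(u,v),\Gamma(v,v))$ whose argument is controlled by the same constant; algebraically, this amounts to the fact that the real symmetric $2\times 2$ matrix $I+(p-2)\hat f\hat f^\top$, with $\hat f=f/|f|$, has eigenvalues $\{1,p-1\}$. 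The main obstacle is the scalar inequality above: it is elementary but subtle, proceeding by homogeneity (reducing to $|a|^2+|b|^2=1$) and an optimization over $\arg(\bar a b)$, and it is the precise point at which the sharp constant $\arcsin|1-2/p|$ emerges. Sharpness of the angle in the proposition is witnessed by testing $a=1$, $b=1+\varepsilon e^{i\alpha}$ and letting $\varepsilon\downarrow 0$; combining the two pointwise bounds with the reduction of the first paragraph then completes the proof.
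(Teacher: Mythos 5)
Your overall reduction (contractivity and analyticity on $\bS_{\phi_p}$ is equivalent to the sectoriality estimate $\mod{\Im\sk{\cA f}{f|f|^{p-2}}}\leq\cot\phi_p\,\Re\sk{\cA f}{f|f|^{p-2}}$) is fine, and your scalar inequality with constant $\cos\phi^*_p=2\sqrt{p-1}/p$ is indeed the heart of the Liskevich--Perelmuter argument. But there is a genuine gap at the form-theoretic step: the Beurling--Deny decomposition of the quadratic form of $\cA$ into a local diffusion part, a jump part with a \emph{nonnegative} kernel $J$, and a nonnegative killing measure is available only when the form is a Dirichlet form, i.e.\ when the semigroup is sub-Markovian (positivity preserving and $L^\infty$-contractive). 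The hypothesis here is only that $(T_t)$ is a symmetric contraction semigroup; such semigroups need not preserve positivity, the associated form need not be a Dirichlet form, and no decomposition with a positive jump kernel exists in general. So your argument proves the Liskevich--Perelmuter theorem \cite{lp} (the sub-Markovian case) but not the stated proposition; this is precisely the gap between \cite{lp} and the general statement, which is why the paper attributes Proposition~\ref{t: 1} to Kriegler \cite{KR}, whose proof goes through a more general noncommutative result rather than through Beurling--Deny.

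For comparison: the paper does not prove Proposition~\ref{t: 1} directly (it is quoted from \cite{KR}), but it does obtain an independent proof as a by-product. In the remark following Corollary~\ref{c: sycn 1}, applying that corollary with $g=0$ gives exactly the sectoriality inequality above, with $\phi_{p_\epsilon}\to\phi_p$ as $\epsilon\to0$. The mechanism that replaces your missing positive jump kernel is the pair Lemma~\ref{l: sycn cont} (linear modulus $\mathbf{T}$) and Lemma~\ref{l: kernel rep.}: a general symmetric contraction is represented by a \emph{complex} symmetric measure $m_T$ on $\wh\Omega\times\wh\Omega$, one writes $\wrt m_T=k\wrt|m_T|$ with $|k|=1$, and the unimodular factor $k(x,y)$ is absorbed using the phase-equivariance $(\partial_\zeta Q)(w\zeta,w\eta)=\overline w\,(\partial_\zeta Q)(\zeta,\eta)$ together with the convexity of the Bellman function (Corollary~\ref{l: sycn final stima}); the analogue of your "killing" term is controlled by $1-\mathbf{T}(1)\geq0$. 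If you want to salvage your approach for the full class, you would need some such substitute for positivity --- e.g.\ the linear-modulus/Gelfand representation of Proposition~\ref{p: 4}, or Kriegler's dilation-theoretic route --- since the pointwise two-point inequality must then be applied to differences of the form $\hat f(y)-\hat f(x)k(x,y)$ rather than $f(x)-f(y)$.
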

\begin{remark}\label{r: 1}
By \cite[p. 72]{stein}, as a consequence of Proposition~\ref{t: 1}, if $p\in (1,\infty)$, $\phi\in (-\phi_p,\phi_p)$ and $f\in L^p(\Omega,\nu)$, then for almost every $x\in\Omega$ the function $t\mapsto(T_{te^{i\phi}}f)(x)$ is real-analytic on $(0,\infty)$. This fact will be implicitly used in the rest of this paper.
\end{remark}

If $M\in L^{\infty}(\R_{+})$, define its {\it Laplace transform} $\widetilde M$ by the rule
$$
\widetilde M(\lambda)=\lambda\int^{\infty}_{0}M(t)e^{-t\lambda}\wrt t,\ \ \ \ \Re \lambda>0\,.
$$

A prominent special case of multipliers of Laplace transform type are the imaginary powers $\cA^{is}$, $s\in \R$. It is well known that there is a deep relation between the growth of the norms of imaginary powers of $\cA$ and its holomorphic functional calculus on sectors \cite[Theorem 4]{M}, \cite[Theorem 5.4]{CDMY}. In particular, we have the following result, essentially due to Meda \cite{M} and proved in \cite[Theorem 2.1]{MMS} (for the case $J\in\N$ we refer the reader to \cite[Theorem 2.2]{fcou}).

\begin{proposition}
\label{t: 1.2}
Suppose that $1 < p < \infty$ and that there exist constants $C_0,\sigma>0$ and $\phi^*\in [0,\pi/2)$ such that
\begin{equation*}
\label{eq: impowass}
\norm{\cA^{is}f}{p}\leq C_0(1+|s|)^\sigma e^{\phi^*|s|}\norm{f}{p}\,,\hskip 40pt  \forall f\in {\rm R}(\cA_p)\,, \ \ \ \forall s\in\R\,.
\end{equation*}
If $J>\sigma+1$ and $m\in H^\infty(\bS_{\phi^*};J)$, then $m(\cA)$ extends to a bounded operator on $\overline{{\rm R}(\cA_p)}$.
\end{proposition}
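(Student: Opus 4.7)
The plan is to represent $m(\cA)$ on $\overline{{\rm R}(\cA_p)}$ as a weighted integral of the imaginary powers $\cA^{is}$ through a Mellin--Fourier decomposition of $m$, and then to exploit the Mihlin--H\"ormander regularity of $m$ on $\pd\bS_{\phi^*}$ to generate enough polynomial decay in $s$ to beat the exponential growth of $\norm{\cA^{is}}{p}$ granted by the hypothesis.

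First I would reduce to the case $m(0)=0$. By Lemma~\ref{l: fin prop. markov}, $\cP_0$ is the projection onto ${\rm N}(\cA_p)$ and $\overline{{\rm R}(\cA_p)}=\ker\cP_0$, so the contribution $m(0)\,\cP_0$ vanishes on $\overline{{\rm R}(\cA_p)}$. Since ${\rm D}(\cA_2)\cap{\rm R}(\cA_2)\cap L^p(\Omega,\nu)$ is dense in $\overline{{\rm R}(\cA_p)}$ by Lemma~\ref{l: fin prop. markov}(ii), it suffices to prove a uniform $L^p$-bound on this subspace. Next, perform the logarithmic change of variable $\lambda=e^\tau$. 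Under $M(\tau):=m(e^\tau)$ the sector $\bS_{\phi^*}$ becomes the horizontal strip $\{|\Im\zeta|<\phi^*\}$, and $M$ extends there to a bounded holomorphic function. Using the fixed cutoff $\psi$ from the definition of $\|\cdot\|_{\phi^*;J}$, form a dyadic partition of unity on $(0,\infty)$ to write $m=\sum_{k\in\Z}m_k$ with each $m_k=m\,\psi(2^{-k}\cdot)$ supported on $\lambda\sim 2^k$. Each $M_k(\tau):=m_k(e^\tau)$ is compactly supported on $\R$, extends holomorphically to the strip, and its $H^J(\R)$-norm on the two boundary lines is controlled by $\norm{\psi\,m(e^{\pm i\phi^*}2^k\cdot)}{H^J(\R)}\leq\norm{m}{\phi^*;J}$. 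Shifting the Fourier integration contour to $\Im\tau=\pm\phi^*$ (according to the sign of $s$) and integrating by parts $J$ times yield the Paley--Wiener--type bound
$$
|\widehat{M_k}(s)|\leq C(J)\,\norm{m}{\phi^*;J}\,e^{-\phi^*|s|}(1+|s|)^{-J},
$$
uniformly in $k\in\Z$ and $s\in\R$. Since at each dyadic scale only finitely many $m_k$ are nontrivial, summing in $k$ and applying Mellin inversion produce
$$
m(\lambda)=\int_\R K(s)\,\lambda^{is}\wrt s,\qquad |K(s)|\leq C(J)\,\norm{m}{\phi^*;J}\,e^{-\phi^*|s|}(1+|s|)^{-J}.
$$

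Now I would plug this into the spectral theorem: for $f$ in the dense subspace above, Fubini (justified via the spectral resolution on $L^2$ together with the dominated convergence coming from the truncated sums in $k$) gives
$$
m(\cA)f=\int_\R K(s)\,\cA^{is}f\wrt s.
$$
Taking $L^p$-norms and invoking the standing hypothesis yields
$$
\norm{m(\cA)f}{p}\leq\int_\R|K(s)|\,\norm{\cA^{is}f}{p}\wrt s\leq C(C_0,J)\,\norm{m}{\phi^*;J}\norm{f}{p}\int_\R(1+|s|)^{\sigma-J}\wrt s,
$$
and the last integral converges precisely because $J>\sigma+1$. Density then extends the bound to all of $\overline{{\rm R}(\cA_p)}$.

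The main technical obstacle I anticipate is twofold: organizing the dyadic decomposition so that the Paley--Wiener bounds sum up with constants matching the intrinsic definition of $\norm{m}{\phi^*;J}$, and justifying the Fubini interchange when $m$ itself need not belong to any $L^1$ class. Both are addressed by working first with finitely many dyadic pieces (for which $K\in L^1(\R)$ is automatic), applying the Fourier/spectral identity on that truncation, and then passing to the limit by the uniform Paley--Wiener bound and the density of ${\rm D}(\cA_2)\cap{\rm R}(\cA_2)\cap L^p(\Omega,\nu)$ secured at the outset.
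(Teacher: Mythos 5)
There is a genuine gap at the heart of your argument: the claimed representation
$m(\lambda)=\int_\R K(s)\,\lambda^{is}\wrt s$ with $|K(s)|\leq C(J)\,\norm{m}{\phi^*;J}\,e^{-\phi^*|s|}(1+|s|)^{-J}$
is false for a general $m\in H^\infty(\bS_{\phi^*};J)$. Take $m\equiv 1$, which belongs to the class: after the substitution $\lambda=e^\tau$ the identity would say that the Fourier transform of an $L^1(\R)$ function is identically $1$, contradicting the Riemann--Lebesgue lemma. Your derivation of the kernel bound breaks down in two places. First, the dyadic pieces $m_k=m\,\psi(2^{-k}\cdot)$ are \emph{not} holomorphic in the sector (no nontrivial compactly supported cutoff extends holomorphically), so the contour shift to $\Im\tau=\pm\phi^*$ that is supposed to produce the factor $e^{-\phi^*|s|}$ is not available for $M_k$; interior Cauchy estimates only yield $(1+|s|)^{-N}$ decay \emph{without} the exponential gain, and that gain is exactly what is needed to beat $e^{\phi^*|s|}$ in the hypothesis. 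Second, even granting a bound for each $\widehat{M_k}$ uniform in $k$, there are infinitely many $k\in\Z$ and the bounds are not summable; the remark that only finitely many $m_k$ are nonzero at each $\lambda$ gives pointwise convergence of $\sum_k m_k$, not convergence of $\sum_k\widehat{M_k}$ to an integrable kernel. In short, the Mellin transform of a bounded multiplier with no decay at $0$ and $\infty$ is not an integrable function, so no absolutely convergent scalar-weighted superposition of imaginary powers of the kind you propose can exist.

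The paper's proof (following Meda \cite{M}, \cite{CM,fcou,MMS}) circumvents precisely this obstruction: one first damps the multiplier, setting $m_\tau(t,\lambda)=(t\lambda)^\tau e^{-t\lambda}m(\lambda)$, which restores decay in $\lambda$ at the price of an auxiliary variable $t>0$; the Mellin transform in $\lambda$ then satisfies the decay of Lemma~\ref{l: 11}, namely $\nor{\mathscr M_{s}m_\tau}_{L^\infty(\R_+)}\leqsim_J \tau^{-1}(\cos\phi^*)^{-\tau-J}(1+|s|)^{-J}e^{-\phi^*|s|}\|m\|_{\phi^*;J}$, and $m(\cA)$ is reconstructed through the formula \eqref{eq: 10}, in which the coefficients are not scalars but operator-valued: the Laplace-transform-type multipliers $\widetilde{[\mathscr M_{s}m_\tau]}(\cA)$. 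Their uniform $L^p$ boundedness (Cowling's theorem \cite{cowling}, or Proposition~\ref{l: laplace} in this paper) is an essential extra ingredient for which your scheme has no counterpart; with it, the $s$-integral converges because $J>\sigma+1$, exactly as in Proposition~\ref{t: 1.2 bis}. If you wish to keep your outline, you must replace the scalar kernel $K(s)$ by such operator-valued coefficients, i.e.\ essentially reproduce Meda's argument.
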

Since in Theorem~\ref{c: 1.1} we shall also be interested in the behavior of the $L^p$ norm of the multiplier $m(\cA)$ with respect to $p$,
we now briefly recall the proof of Proposition~\ref{t: 1.2}, however with the distinction of tracking down the dependence on $p,\phi^*$ and $J$  wherever it appears.

For every $\tau\in(0,1)$ set
$$
m_\tau(t,\lambda)=(t\lambda)^\tau e^{-t\lambda}m(\lambda)\,, \ \ \ \  t>0,\ \ \  \lambda\in\bS_{\phi^*}\,,
$$
and denote by $s\longmapsto\mathscr M_{s} m_\tau$ the Mellin transform of $m_\tau$ with respect to the second variable. Explicitly, for $t>0$,
$$
[\mathscr M_{s}m_\tau](t)=\int_0^\infty m_\tau(t,\lambda)\lambda^{-is}\frac{\wrt \lambda}\lambda\,,\ \ \ \ s\in\R\,.
$$
\begin{lemma}\label{l: 11}
Let $\tau\in(0,1)$, $s\in\R$, $\alpha\geq0$, $\phi^*\in [0,\pi/2)$ and $m\in H^\infty(\bS_{\phi^*};\alpha)$.
Then
\begin{equation*}
\nor{\mathscr M_{s}m_\tau}_{L^\infty(\R_+)}
\leqslant
C(\alpha)\tau^{-1}(\cos\phi^*)^{-\tau-\alpha}(1+|s|)^{-\alpha}e^{-{\phi^*}|s|}\|m\|_{\phi^*;\alpha}
\end{equation*}
\end{lemma}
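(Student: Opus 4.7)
The plan is to exploit both the holomorphy of $m$ on $\bS_{\phi^*}$, which yields the exponential factor $e^{-\phi^*|s|}$, and the dyadic structure of the H\"ormander norm, which yields the polynomial factor $(1+|s|)^{-\alpha}$. First, the integrand $\lambda\longmapsto(t\lambda)^\tau e^{-t\lambda}m(\lambda)\lambda^{-is-1}$ is holomorphic on $\bS_{\phi^*}$, integrable near $0$ (because $\tau>0$), and exponentially decaying at infinity inside the sector (because $|e^{-t\lambda}|=e^{-t\Re\lambda}$). Cauchy's theorem therefore permits rotating the contour from $(0,\infty)$ onto the boundary ray $\{re^{-i\varepsilon\phi^*}:r>0\}$ with $\varepsilon=\operatorname{sgn}(s)$, along which $\lambda^{-is}=r^{-is}e^{-\phi^*|s|}$. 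This extracts the factor $e^{-\phi^*|s|}$ at once.

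After the rotation, the change of variable $u=tr$ and discarding factors of modulus one reduces the problem to bounding, uniformly in $t>0$,
$$
J(s,t)=\int_0^\infty u^\tau e^{-u\omega}\,m(u\omega/t)\,u^{-is}\,\frac{\wrt u}{u}\,,\qquad \omega=e^{-i\varepsilon\phi^*}.
$$
The substitution $u=e^x$ identifies $J(s,t)$ with the Fourier transform at $s$ of $h(x)=e^{\tau x}\exp(-\omega e^x)\,m(\omega e^x/t)$. Choose $\eta\in C_c^\infty((1/2,2))$ giving a partition of unity $\sum_{n\in\Z}\eta(2^{-n}u)=1$ on $(0,\infty)$ with $\eta\leqslant\psi$, and split $h=\sum_n h_n$ accordingly; each $h_n$ is then supported in a translate of one fixed bounded interval. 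For such functions one has the standard estimate
$$
|\wh h_n(s)|\leqslant C(\alpha)(1+|s|)^{-\alpha}\|h_n\|_{H^\alpha(\R)}.
$$

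It remains to bound $\|h_n\|_{H^\alpha}$. Returning to the $u$-variable, $h_n$ is the product of $F_n(v):=(2^nv)^\tau\exp(-2^nv\omega)\eta(v)$ with $v\mapsto m(R\omega v)$, where $R=2^n/t$ and $v\in(1/2,2)$. Since $\eta\leqslant\psi$, the product rule in Sobolev spaces together with the very definition of $\|\cdot\|_{\phi^*;\alpha}$ gives
$$
\|h_n\|_{H^\alpha}\leqsim_\alpha \|F_n\|_{C^\alpha}\,\|m\|_{\phi^*;\alpha}\leqsim_\alpha 2^{n(\tau+\alpha)}e^{-c\,2^n\cos\phi^*}\|m\|_{\phi^*;\alpha},
$$
the second inequality because each derivative of $\exp(-2^nv\omega)$ costs a factor $2^n$, while $|\exp(-2^nv\omega)|=\exp(-2^nv\cos\phi^*)$. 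Summing $\sum_{n\in\Z}2^{n(\tau+\alpha)}e^{-c\,2^n\cos\phi^*}$ by splitting at $2^n\sim 1/\cos\phi^*$ yields $(\cos\phi^*)^{-(\tau+\alpha)}/(\tau+\alpha)\leqslant\tau^{-1}(\cos\phi^*)^{-(\tau+\alpha)}$, and together with the extracted factors $e^{-\phi^*|s|}$ and $(1+|s|)^{-\alpha}$ this delivers the claimed inequality.

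The main technical point to handle carefully is the Sobolev--Fourier decay inequality for the compactly supported pieces $h_n$, uniformly in the (possibly non-integer) parameter $\alpha$; this relies either on a Besov-type embedding or on a fractional integration by parts on a fixed bounded interval. The remaining ingredients---justifying the contour rotation, choosing a convenient dyadic partition subordinate to $\psi$, and executing the geometric summation---are routine.
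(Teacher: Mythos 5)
Your proof is correct in substance, but it takes a genuinely different route from the paper, which does not reprove the estimate at all: there the case $\alpha\in\N$ is obtained by modifying \cite[Theorem 4]{M} as in the proof of \cite[Theorem 2.2]{fcou}, and non-integer $\alpha$ is handled by interpolation between Sobolev spaces. You instead reconstruct the argument from scratch: the contour rotation onto the boundary ray (legitimate, since the integrand is $O(|\lambda|^{\tau-1})$ near $0$, decays like $e^{-t|\lambda|\cos\phi^*}$ at infinity, and $m$ has nontangential boundary values, so one rotates to angle $\phi^*-\delta$ and lets $\delta\to0$) extracts $e^{-\phi^*|s|}$ exactly as in Meda's scheme; then the identification with a Fourier transform in $x=\log u$, the dyadic splitting subordinate to $\psi$, and the decay estimate $|\wh{h}_n(s)|\leqslant C(\alpha)(1+|s|)^{-\alpha}\nor{h_n}_{H^\alpha(\R)}$ for pieces supported in translates of a fixed interval replace the repeated integration by parts of the integer case, and they treat fractional $\alpha$ directly, so the interpolation step the paper invokes becomes unnecessary. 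The paper's route buys brevity; yours buys a self-contained proof with explicit dependence on $\tau$, $\cos\phi^*$ and $\alpha$. Two small points of bookkeeping: what you actually use is $\psi\equiv1$ on ${\rm supp}\,\eta$ (not merely $\eta\leq\psi$), so that $F_n\,m(R\omega\,\cdot)=F_n\,[\psi\,m(R\omega\,\cdot)]$ and the multiplier bound $\nor{F_n\,[\psi\,m(R\omega\,\cdot)]}_{H^\alpha}\leqsim_\alpha\nor{F_n}_{C^{\lceil\alpha\rceil}}\norm{\psi\,m(R\omega\,\cdot)}{H^\alpha(\R)}$ applies; and the passage between the $H^\alpha$ norm in the logarithmic variable and the one defining $\|m\|_{\phi^*;\alpha}$ uses a fixed diffeomorphism (up to translation), hence is uniform in $n$.

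One intermediate inequality needs repair, though the conclusion survives. The bound $\nor{F_n}_{C^\alpha}\leqsim_\alpha 2^{n(\tau+\alpha)}e^{-c\,2^n\cos\phi^*}$ is false for $n<0$: there, derivatives falling on $\eta$ or on $v^\tau$ cost $O(1)$ rather than $2^n$, and already $\nor{F_n}_\infty\sim 2^{n\tau}\gg 2^{n(\tau+\alpha)}$. The correct estimate is $\nor{F_n}_{C^{\lceil\alpha\rceil}}\leqsim_\alpha 2^{n\tau}(1+2^n)^{\alpha}e^{-c\,2^n\cos\phi^*}$, and the series still sums to what is needed: the part with $2^n\leq1$ contributes $\leqsim_\alpha\sum_{2^n\leq 1}2^{n\tau}\leqsim\tau^{-1}$, the part with $2^n\geq1$ contributes $\leqsim_\alpha\Gamma(\tau+\alpha)\,(c\cos\phi^*)^{-\tau-\alpha}\leqsim_\alpha\tau^{-1}(\cos\phi^*)^{-\tau-\alpha}$, and since both $\tau^{-1}\geq1$ and $(\cos\phi^*)^{-\tau-\alpha}\geq1$ the total is $\leqsim_\alpha\tau^{-1}(\cos\phi^*)^{-\tau-\alpha}$. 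In particular the factor $\tau^{-1}$ really comes from the small-scale (equivalently, small-frequency) part of the sum, not from $1/(\tau+\alpha)$ as in your geometric summation; with this correction the proof is complete.
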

\begin{proof}
Suppose for a moment that $\alpha\in\N$. Then the lemma follows by modifying \cite[Theorem 4]{M} as described in the proof of \cite[Theorem 2.2]{fcou}.
In order to prove the lemma for noninteger values of $\alpha$, just observe that by a simple argument of interpolation between Sobolev spaces the conclusion of \cite[Theorem 4]{M} still holds if $\alpha\in\R_+\setminus \N$.
\end{proof}
Following the proof of \cite[Theorem 2.1]{CM} we arrive at the representation formula
\begin{equation}\label{eq: 10}
m(\cA)f=\frac{2^{\tau}}{\pi \Gamma(\tau+1)}\int^{+\infty}_{-\infty}
\widetilde{[\mathscr M_{s}m_\tau]}(\cA)\,
\cA^{is}f\wrt s\,, \ \ \ \   f\in {\rm R}(\cA_p)\,,
\end{equation}
where $\Gamma$ denotes the Euler's function.

By Cowling's multiplier theorem \cite[Theorem 3]{cowling}, Laplace transform type multipliers belong to $\cM_p(\cA)$. Denote by $C_2=C_2(p)$ the best constant in
$$
\|\widetilde{M}(\cA)f\|_p\leq C\|M\|_\infty\|f\|_p\hskip 40pt \forall f\in{\rm R}(\cA_p)\,, \forall M\in L^\infty(\R_+).
$$
We can now easily obtain the following ``quantitative'' version of Proposition \ref{t: 1.2}.
\begin{proposition}
\label{t: 1.2 bis}
Under the assumptions of Proposition \ref{t: 1.2},
$$
\norm{m(\cA)f}{p}\leq C_1\norm{f}{p}\,,\ \ \ \ f\in \overline{{\rm R}(\cA_p)}\,,
$$
where
\begin{equation}
\label{eq: best cons}
C_1=
C(J)\,C_2(p)
\inf_{\tau\in(0,1)}
\int^{+\infty}_{-\infty}
\nor{\mathscr M_{s}m_\tau}_{L^\infty(\R_+)}\,\cdot
 \|\cA^{is}\|_{\cB\left(L^p(\overline{{\rm R}(\cA_p)},\nu)\right)}\wrt s
\end{equation}
is finite.
\end{proposition}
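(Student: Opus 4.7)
The starting point is the representation formula \eqref{eq: 10}, which expresses $m(\cA)f$ for $f\in{\rm R}(\cA_p)$ as a vector-valued integral
$$
m(\cA)f=\frac{2^{\tau}}{\pi \Gamma(\tau+1)}\int^{+\infty}_{-\infty}
\widetilde{[\mathscr M_{s}m_\tau]}(\cA)\,\cA^{is}f\wrt s
$$
for any $\tau\in(0,1)$. The idea is to bound the two factors inside the integrand separately in operator norm, multiply, and then integrate against $\wrt s$.

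The first factor is a multiplier of Laplace transform type with symbol $\mathscr M_{s}m_\tau\in L^\infty(\R_+)$, so Cowling's multiplier theorem \cite[Theorem 3]{cowling} gives
$$
\nor{\widetilde{[\mathscr M_{s}m_\tau]}(\cA)g}{p}\leq C_2(p)\,\nor{\mathscr M_{s}m_\tau}_{L^\infty(\R_+)}\,\nor{g}{p}
$$
for every $g\in{\rm R}(\cA_p)$. Since $\cA^{is}f\in \overline{{\rm R}(\cA_p)}$ for each $s\in\R$, this bound applies (after extension to the closure of the range, which is harmless since $\widetilde{[\mathscr M_{s}m_\tau]}(\cA)$ is defined by spectral theory and the projection $\cP_0$ commutes with it). Combining with the standing hypothesis $\nor{\cA^{is}f}{p}\leq C_0(1+|s|)^\sigma e^{\phi^*|s|}\nor{f}{p}$ from Proposition \ref{t: 1.2}, the triangle inequality for the Bochner integral yields
$$
\nor{m(\cA)f}{p}\leq \frac{2^\tau}{\pi\,\Gamma(\tau+1)}\,C_2(p)\int_{-\infty}^{+\infty}\nor{\mathscr M_{s}m_\tau}_{L^\infty(\R_+)}\,\nor{\cA^{is}}{\mathcal B(L^p(\overline{{\rm R}(\cA_p)},\nu))}\wrt s\,\nor{f}{p},
$$
which, after taking the infimum over $\tau\in(0,1)$ and absorbing the $\tau$-dependent prefactor into the constant $C(J)$, gives exactly the asserted bound with $C_1$ as in \eqref{eq: best cons}.

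To see that $C_1$ is finite, invoke Lemma~\ref{l: 11}, which furnishes
$$
\nor{\mathscr M_{s}m_\tau}_{L^\infty(\R_+)}\leqslant C(J)\,\tau^{-1}(\cos\phi^*)^{-\tau-J}(1+|s|)^{-J}e^{-\phi^*|s|}\nor{m}{\phi^*;J}\,.
$$
Multiplying by the imaginary-power bound produces the integrand
$$
C(J)\,C_0\,\tau^{-1}(\cos\phi^*)^{-\tau-J}\nor{m}{\phi^*;J}\,(1+|s|)^{\sigma-J}\,,
$$
which, under the standing assumption $J>\sigma+1$, is integrable over $\R$. Fixing any convenient $\tau\in(0,1)$ (say $\tau=1/2$) suffices to verify finiteness; the infimum in \eqref{eq: best cons} only improves the constant.

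Finally, since the estimate holds on ${\rm R}(\cA_p)$ and, by Lemma~\ref{l: fin prop. markov}(ii), the subspace ${\rm D}(\cA_2)\cap {\rm R}(\cA_2)\cap L^p(\Omega,\nu)\subset{\rm R}(\cA_p)$ is dense in $\overline{{\rm R}(\cA_p)}$, $m(\cA)$ extends by continuity to a bounded operator on $\overline{{\rm R}(\cA_p)}$ with the same norm bound. I do not expect any substantive obstacle in this argument beyond the bookkeeping needed to confirm that each factor in $C_1$ tracks the correct dependence on $p$, $\phi^*$ and $J$; the real analytic work is compressed into Lemma~\ref{l: 11} and Cowling's Laplace-transform-type theorem, both of which may be cited as stated.
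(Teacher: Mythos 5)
Your proposal is correct and follows essentially the same route the paper intends: the representation formula \eqref{eq: 10}, Cowling's Laplace-transform-type bound giving the factor $C_2(p)$, the standing imaginary-power hypothesis, the triangle inequality for the Bochner integral, and Lemma~\ref{l: 11} with $J>\sigma+1$ to verify finiteness, followed by density to pass to $\overline{{\rm R}(\cA_p)}$. No gaps.
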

Consequently, in order to prove Theorem~\ref{c: 1.1}, we will first estimate the constant $C_2$ (see Proposition~\ref{l: laplace}), and then we will estimate the growth in $s\in\R$ of the $L^p$ norms of imaginary powers $\cA^{is}$ (see Proposition~\ref{p: 5}). To this end, we will use the following subordination formula.

For every $\theta\in(-\pi/2,\pi/2)$, consider the positively oriented path
$$
\gamma_\theta=\Mn{re^{i\theta}}{r>0}\,.
$$
\begin{lemma}\label{l: sub. formula}
Suppose that either $\psi\in (0,\pi/2)$ and $M\in H^\infty(\bS_{\psi})$, or $\psi=0$ and $M\in L^{\infty}(\R_{+})$. Then, for every $\theta\in (-\psi,\psi)\cup \{0\}$, $f\in {\rm D}(\cA_2)\cap {\rm R}(\cA_2)$ and $g\in L^2(\Omega,\nu)$,
$$
\Sk{\widetilde M(\cA)f}{g}=2
\int_{\gamma_\theta}
\Sk{\cA T_zf}{T_{\overline{z}}g}\,
M(2z)\wrt z\,.
$$
Consequently,
$$
\mod{\Sk{\widetilde M(\cA)f}{g}}
\leq
2
\nor{M}_{L^{\infty}(\gamma_\theta)}
\int_{\gamma_\theta}
\mod{\sk{\cA T_zf}{T_{\overline{z}}g}}
\wrt|z|\,.
$$
\end{lemma}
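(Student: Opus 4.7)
The plan is to establish the identity first for the trivial path $\gamma_0$ (the positive real axis) and then rotate the contour to $\gamma_\theta$ by means of Cauchy's theorem, using the decay of the integrand forced by the hypothesis $f\in {\rm D}(\cA_2)\cap {\rm R}(\cA_2)$.

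For $\theta=0$, the definition $\widetilde M(\lambda)=\lambda\int_0^\infty M(t)e^{-t\lambda}\wrt t$ together with the spectral theorem and Fubini (both applied on $L^2$) give $\Sk{\widetilde M(\cA)f}{g}=\int_0^\infty M(t)\sk{\cA T_tf}{g}\wrt t$; the absolute integrability required for Fubini follows from $f\in{\rm D}(\cA_2)$ (boundedness near $t=0$) combined with $f\in {\rm R}(\cA_2)$ (quadratic decay at infinity, see below). Writing $T_t=T_{t/2}T_{t/2}$, using that $T_{t/2}$ commutes with $\cA$ and is self-adjoint on $L^2$ (as $\cA_2$ is), one has $\sk{\cA T_tf}{g}=\sk{\cA T_{t/2}f}{T_{t/2}g}$; the change of variable $t=2r$ converts the integral into $2\int_{\gamma_0}M(2z)\sk{\cA T_zf}{T_{\overline{z}}g}\wrt z$, which is the claim in this case.

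For general $\theta\in(-\psi,\psi)$, the hypothesis $M\in H^\infty(\bS_\psi)$ ensures $\gamma_\theta\subset\bS_\psi$. By the complex version of self-adjointness (namely $T_z^*=T_{\overline{z}}$ on $L^2$, which follows from $\cA^*=\cA$) and commutativity of $T_z$ with $\cA$, the integrand on the right-hand side of the claim can be rewritten as $\sk{\cA T_zf}{T_{\overline{z}}g}=\sk{T_z\cA T_zf}{g}=\sk{\cA T_{2z}f}{g}$. Substituting $w=2z$ then reduces the claim to the equality
$$
\int_{\gamma_0} M(w)\sk{\cA T_wf}{g}\wrt w=\int_{\gamma_\theta} M(w)\sk{\cA T_wf}{g}\wrt w,
$$
the integrand now being a holomorphic function of $w$ on $\bS_\psi\cap\{\Re w>0\}$. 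One applies Cauchy's theorem on the pie-slice $\{re^{is}:\epsilon\leq r\leq R,\ s\text{ between }0\text{ and }\theta\}$ and verifies that the two circular-arc contributions vanish as $\epsilon\to 0^+$ and $R\to\infty$.

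This vanishing is the main obstacle, and it is precisely where both halves of the hypothesis on $f$ are used. Near the origin, $f\in {\rm D}(\cA_2)$ and the $L^2$-contractivity of $T_w$ for $\Re w\geq 0$ yield $\nor{\cA T_wf}_2=\nor{T_w\cA f}_2\leq\nor{\cA f}_2$, so the integrand is uniformly bounded on the slice and the $\epsilon$-arc contributes $O(\epsilon)$. Near infinity, $f\in {\rm R}(\cA_2)$ allows us to write $f=\cA h$ with $h\in {\rm D}(\cA_2)$, whence $\cA T_wf=\cA^2 T_wh$; spectral calculus and $\sup_{\lambda>0}\lambda^2e^{-\lambda\Re w}\leqsim(\Re w)^{-2}$ then give $\nor{\cA T_wf}_2\leqsim \nor{h}_2/(\Re w)^2$. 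On the arc $|w|=R$ inside the slice, $\Re w\geq R\cos\psi>0$, so the $R$-arc contributes $O(1/R)$. Cauchy's theorem then delivers the desired identity, and the quoted pointwise estimate follows immediately by the triangle inequality, since $|M(2z)|\leq\nor{M}_{L^\infty(\gamma_\theta)}$ along $\gamma_\theta$ and $|\wrt z|=\wrt|z|$ on the ray.
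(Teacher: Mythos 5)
Your argument is correct and follows exactly the route the paper indicates for this lemma: the spectral theorem (with Fubini) to get the $\theta=0$ identity, the relation $T_z^*=T_{\overline z}$ on $L^2$ to rewrite the integrand, and Cauchy's theorem on a pie-slice — with the arc contributions controlled by $f\in{\rm D}(\cA_2)$ near the origin and $f\in{\rm R}(\cA_2)$ at infinity — to rotate the contour. You have simply supplied the details the paper leaves implicit.
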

\begin{proof}
The lemma quickly follows from the spectral theorem, the Cauchy theorem and the fact that on $L^2$ we have $T^*_z=T_{\overline{z}}$.
\end{proof}

\section{Bilinear embedding and proof of the multiplier theorem}
\label{s: 3}
We now state a complex-time bilinear embedding theorem which is the principal tool for proving Theorem~\ref{c: 1.1}.

If $p\in (2,\infty)$ and $\epsilon \in (0,1/2)$ define
\begin{equation}
\label{eq: 15}
p_\epsilon=\frac{p-2\epsilon}{1-\epsilon}\hskip 20pt \quad {\rm and}\hskip 20pt \quad q_\epsilon=p_\e'=\frac{p_\epsilon}{p_\epsilon-1}\,.
\end{equation}
\begin{theorem}[Bilinear embedding]
\label{t: sycn bil embedding}
For every $p\in(2,\infty)$, $\epsilon\in (0,1/2)$, $\phi\in [-\phi_{p_\epsilon},\phi_{p_\epsilon}]$,
and all $f\in L^p(\Omega,\nu)$, $g\in L^q(\Omega,\nu)$,
$$
\int_{\gamma_\phi}
\mod{\sk{\cA T_zf}{T_{\overline{z}}g}}
\wrt|z|
\leqslant \frac{30(p-1)}{\epsilon\cos\phi}\|f\|_p\|g\|_q\,.
$$
\end{theorem}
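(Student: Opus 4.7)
The plan is to adapt the Bellman-function/heat-flow method to the complex ray $\gamma_\phi$. Following the roadmap sketched in the introduction, I would work with the Nazarov--Treil Bellman function $Q=Q_p$ on $\C\times\C$ (as constructed in Section~\ref{s: bellman}) and, along the parametrization $z(t)=t\,e^{i\phi}$, introduce the scalar heat-flow
\begin{equation*}
E(t) = \int_\Omega Q\bigl(T_{z(t)} f,\, T_{\overline{z(t)}}\, g\bigr) \wrt\nu,\qquad t>0.
\end{equation*}
Invoking the density statements of Lemma~\ref{l: fin prop. markov} together with the analyticity of the semigroup in the sector $\bS_{\phi_p}$ given by Proposition~\ref{t: 1} and Remark~\ref{r: 1}, I may restrict attention to a dense subclass of bounded data $f,g\in \mathrm{D}(\cA_2)\cap L^p\cap L^q$. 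This makes $E$ a real-analytic function of $t>0$, legitimizes differentiation under the integral sign, and permits the self-adjoint integration by parts used below.

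Computing $E'(t)$ produces two groups of terms: ``drift'' terms of the form $\sk{\cA T_zf}{T_{\overline z}g}$ weighted by first derivatives of $Q$, which pick up the rotation factor $e^{i\phi}$, and ``gradient'' terms that, after integration by parts, appear as integrals of $\mathrm{Hess}\,Q$ evaluated on the gradient of $(T_zf,T_{\overline z}g)$ in the sense of \eqref{d: 2}. These must be combined into a single quadratic form whose real part, after multiplication by $e^{i\phi}$, packages both the rotated Hessian and the drift. The central structural input is then the convexity property asserted in Theorem~\ref{t: convex}: for $|\phi|\leqslant \phi_{p_\epsilon}$ the resulting rotated quadratic form is strong enough to dominate the pointwise expression $|\sk{\cA T_zf}{T_{\overline z}g}|$ with a coefficient of order $\epsilon\cos\phi/(p-1)$. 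Putting the two together leads to the differential inequality
\begin{equation*}
|\sk{\cA T_zf}{T_{\overline z}g}| \,\leqslant\, -\,C\,\frac{p-1}{\epsilon\cos\phi}\,E'(t).
\end{equation*}

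It then suffices to integrate in $t$. Since $Q$ is nonnegative and satisfies the standard Nazarov--Treil size bound $Q(\zeta_1,\zeta_2)\leqsim |\zeta_1|^p/p+|\zeta_2|^q/q$, we have $E(0^+)\leqsim \|f\|_p^p+\|g\|_q^q$ and $E(\infty)\geqslant 0$, so integrating along $\gamma_\phi$ yields
\begin{equation*}
\int_{\gamma_\phi} |\sk{\cA T_zf}{T_{\overline z}g}|\wrt|z| \,\leqsim\, \frac{p-1}{\epsilon\cos\phi}\bigl(\|f\|_p^p+\|g\|_q^q\bigr).
\end{equation*}
A one-parameter rescaling $(f,g)\mapsto(\lambda f,\lambda^{-p/q}g)$ followed by optimization in $\lambda>0$ homogenizes the right-hand side into $\|f\|_p\|g\|_q$, and a careful bookkeeping of the constants in Theorem~\ref{t: convex} produces the explicit numerical value $30$.

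The main obstacle is precisely the complex-sectorial convexity statement of Theorem~\ref{t: convex}. The angle $\phi_{p_\epsilon}$ emerges as the largest rotation for which $\mathrm{Re}\!\left(e^{i\phi}\mathrm{Hess}\,Q\right)$, paired with the drift direction, remains positive with the announced quantitative lower bound; extracting this threshold will require a delicate spectral analysis of the real Hessian of the Nazarov--Treil function, with the auxiliary parameter $\epsilon$ supplying the safety margin that prevents the constant from blowing up as $\phi$ approaches $\phi_p$. A secondary difficulty, familiar in this circle of ideas, is to rigorously justify the heat-flow integration by parts for an arbitrary symmetric contraction semigroup: Bakry's sub-Markovian framework is insufficient here, so one must ground the computation in the more general analyticity machinery underlying Proposition~\ref{t: 1}, i.e., the results of Kriegler on noncommutative $L^p$ spaces.
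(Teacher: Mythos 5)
Your outline reproduces the paper's heat-flow strategy up to a point (the flow $\cE(t)=\int_\Omega Q(T_{te^{i\phi}}f,T_{te^{-i\phi}}g)\wrt\nu$, the size and gradient bounds on $Q$, the convexity of Theorem~\ref{t: convex}, the integration in $t$ and the rescaling), but the step you treat as routine is precisely where the real difficulty lies, and your proposal has no workable argument for it. You pass from $-\cE'(t)$ to a quadratic form in $\mathrm{Hess}\,Q$ ``after integration by parts'', i.e.\ you use the identity that converts $2\Re\int(e^{i\phi}\partial_\zeta Q\,\cA f+e^{-i\phi}\partial_\eta Q\,\cA g)\wrt\nu$ into $\int\cR_\phi(Q)[(f,g);(\wrt f,\wrt g)]\wrt\nu$. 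That identity requires $\cA$ to be a diffusion-type operator satisfying a chain rule (a carr\'e du champ), as in the Riemannian illustration of Section~\ref{s: heat-flow}; a general generator of a symmetric contraction semigroup has no gradient structure at all, and the paper states explicitly that it is ``not so obvious'' that the convexity \eqref{eq: 16} implies the needed inequality \eqref{eq: heatflow p1 bis} in this generality. Your closing remark that one should ``ground the computation in Kriegler's analyticity machinery'' does not fill this hole: Proposition~\ref{t: 1} gives contractivity and analyticity of $T_z$ on $L^p$, which is only used for the regularity of the flow; it provides neither an integration-by-parts formula nor any substitute for it.

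What the paper actually does, and what is missing from your proposal, is the content of Section~\ref{s: proof bilinear}: it proves \eqref{eq: heatflow p1 bis} with $B_0=2\delta\cos\phi$ by discretizing, i.e.\ establishing the analogous inequality with $\cA$ replaced by $I-T$ for an arbitrary symmetric contraction $T$ (Proposition~\ref{p: 4}), and then taking $T=T_t$, dividing by $t$ and letting $t\to0$ (Corollary~\ref{c: sycn 1}). The proof of Proposition~\ref{p: 4} is not a formal Hessian computation: it uses the linear modulus $\mathbf{T}$ of $T$ (Lemma~\ref{l: sycn cont}), the Gelfand transform of $L^\infty(\Omega,\nu)$ and a representation of $T$ by a complex symmetric Radon measure $m_T$ on $\wh\Omega\times\wh\Omega$ (Lemma~\ref{l: kernel rep.}), splits $I-T=(1-\mathbf{T}(1))I+(\mathbf{T}(1)I-T)$, and controls the second piece pointwise on pairs of points via the finite-increment consequence of the convexity (Corollary~\ref{l: sycn final stima}), in the spirit of the two-point model of Proposition~\ref{l: two-point}. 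Without this (or some equivalent device for converting the pointwise convexity of $Q$ into the operator inequality \eqref{eq: heatflow p1 bis} for a completely general $\cA$), your argument only proves the bilinear embedding for Laplace--Beltrami-type generators, not the stated theorem; the explicit constant $30(p-1)/(\epsilon\cos\phi)$ also depends on the specific value $B_0=2\delta\cos\phi$ produced by that machinery together with $\delta=2q(q-1)\epsilon/85$.
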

The proof of the bilinear embedding is deferred to Section~\ref{s: proof bilinear}. Here we prove some consequences of Theorem~\ref{t: sycn bil embedding}, most notably Theorem~\ref{c: 1.1}. We first consider multipliers of Laplace transform type and, in particular, imaginary powers.

\begin{proposition}
\label{l: laplace}
Suppose that  $M\in L^\infty(\R_{+})$. Then, for every $p\in (1,\infty)$,
$$
\nor{\widetilde M(\cA)f}_p\leq 120 (p^{*}-1)\|M\|_{\infty}\|f\|_p\,,\ \ \ \   \forall f\in \overline{{\rm R}(\cA_p)}\,.
$$
\end{proposition}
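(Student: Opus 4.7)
My plan is to combine the subordination formula of Lemma~\ref{l: sub. formula}, specialized to the real half-line $\gamma_0=\R_+$, with the bilinear embedding of Theorem~\ref{t: sycn bil embedding}, specialized to $\phi=0$. As a preliminary density reduction, Lemma~\ref{l: fin prop. markov}(ii) gives that ${\rm D}(\cA_2)\cap{\rm R}(\cA_2)\cap L^p(\Omega,\nu)$ is dense in $\overline{{\rm R}(\cA_p)}$, so it suffices to prove the inequality for $f$ in this subspace; for such $f$ the element $\widetilde M(\cA)f$ is a priori well defined in $L^2(\Omega,\nu)$, and its $L^p$-norm can be recovered by testing the pairing $\sk{\widetilde M(\cA)f}{g}$ against $g\in L^2(\Omega,\nu)\cap L^q(\Omega,\nu)$, a dense subspace of $L^q$.

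Consider first $p\in[2,\infty)$, so that $p^*=p$. Applying Lemma~\ref{l: sub. formula} with $\psi=\theta=0$ gives
\begin{equation*}
|\sk{\widetilde M(\cA)f}{g}|\leq 2\nor{M}_\infty\int_0^\infty|\sk{\cA T_tf}{T_tg}|\wrt t.
\end{equation*}
Since $0\in[-\phi_{p_\epsilon},\phi_{p_\epsilon}]$ for every $\epsilon\in(0,1/2)$, Theorem~\ref{t: sycn bil embedding} with $\phi=0$ bounds the integral on the right-hand side by $30(p-1)\epsilon^{-1}\nor{f}_p\nor{g}_q$. Combining these and letting $\epsilon\uparrow 1/2$ produces the claimed constant $120(p-1)=120(p^*-1)$.

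For $p\in(1,2)$ we have $p^*=q>2$. The self-adjointness of $\cA$ and of $T_t$ on $L^2(\Omega,\nu)$ forces $\sk{\cA T_tf}{T_tg}=\overline{\sk{\cA T_tg}{T_tf}}$, so the integrand above is symmetric in $(f,g)$ once $f,g\in L^2(\Omega,\nu)$; applying Theorem~\ref{t: sycn bil embedding} to the pair $(g,f)$ with exponent $q$ in place of $p$ reproduces the same estimate, now with $p^*-1=q-1$ replacing $p-1$. The case $p=2$ is immediate from $\nor{\widetilde M}_{L^\infty(\R_+)}\leq\nor{M}_\infty$, which yields the bound $\nor{M}_\infty\leq 120(p^*-1)\nor{M}_\infty$ on $L^2(\Omega,\nu)$.

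No substantial analytic obstacle is expected: the heavy lifting has already been absorbed into Theorem~\ref{t: sycn bil embedding}, and the present argument is just its combination with the Laplace subordination formula and an optimization in the parameter $\epsilon$. The only point requiring mild care is the density/duality bookkeeping promoting the bilinear inequality on ${\rm D}(\cA_2)\cap{\rm R}(\cA_2)\cap L^p\times(L^2\cap L^q)$ into a genuine $L^p$-operator norm bound on $\overline{{\rm R}(\cA_p)}$, which follows routinely from Lemma~\ref{l: fin prop. markov}.
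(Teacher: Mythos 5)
Your proposal is correct and follows essentially the same route as the paper's own proof: density via Lemma~\ref{l: fin prop. markov}(ii), the subordination formula of Lemma~\ref{l: sub. formula} with $\theta=0$, and the bilinear embedding of Theorem~\ref{t: sycn bil embedding} with $\phi=0$ and $\epsilon$ at (or tending to) $1/2$, which yields $2\cdot 60(p-1)=120(p^*-1)$. Your treatment of $p\in(1,2)$ by the symmetry $\sk{\cA T_tf}{T_tg}=\overline{\sk{\cA T_tg}{T_tf}}$ is just the duality step of the paper carried out at the level of the pairing, and your limit $\epsilon\uparrow 1/2$ is a slightly more careful reading of the open range of $\epsilon$ than the paper's wording; neither changes the argument.
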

\begin{proof}
By duality it suffices to assume that $p> 2$. The lemma follows by combining Lemma~\ref{l: fin prop. markov} ({\rm ii}) with Lemma~\ref{l: sub. formula} and Theorem~\ref{t: sycn bil embedding} applied with $\theta=\phi=0$ and $\epsilon=1/2$.
\end{proof}

\begin{proposition}
\label{p: 5}
For every $s\in\R$, $p>1$ and $f\in\overline{{\rm R}(\cA_p)}$,
\begin{equation*}
\label{eq: pnorm imaginary}
\norm{\cA^{is} f}{p}\leqsim (p^*-1)
(1+|s|)^{1/2}e^{\phi^*_{p}|s|}\norm{f}{p}\,.
\end{equation*}
Moreover, for every $p^*\geq 3$ and $f\in \overline{{\rm R}(\cA_p)}$,
\begin{equation*}\label{eq: pot imm stima fine}
\|\cA^{is}f\|_p\leqsim\
e^{\phi^*_p|s|}\|f\|_p
\begin{cases}
(p^*-1)(1+|s|)^{-1/2}& {\rm if}\ \ \ |s|\leq2p^*\sqrt{p^*-1}/(p^*-2)\,;\\
\sqrt{p^*-1}(1+|s|)^{1/2}& {\rm if}\ \ \ |s|> 2p^*\sqrt{p^*-1}/(p^*-2)\,.
\end{cases}
\end{equation*}
\end{proposition}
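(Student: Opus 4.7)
The plan is to represent $\lambda^{is}$ as a Laplace transform and then apply the complex-time bilinear embedding (Theorem~\ref{t: sycn bil embedding}) after a suitable contour rotation. The identity $\Gamma(1-is)\lambda^{is-1}=\int_0^\infty t^{-is}e^{-t\lambda}\wrt t$ realizes $\lambda^{is}=\widetilde{M_s}(\lambda)$ with $M_s(t)=t^{-is}/\Gamma(1-is)$, and on the ray $\gamma_{-\mathrm{sgn}(s)\phi}$ one has $|M_s(2z)|\equiv e^{-|s|\phi}/|\Gamma(1-is)|$. By the $L^p$--$L^q$ duality $\|\cA^{is}\|_p=\|\cA^{-is}\|_q$, I may assume $p>2$.

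\textbf{Key bound.} Combining Lemma~\ref{l: sub. formula} with Theorem~\ref{t: sycn bil embedding} at $\phi=\phi_{p_\epsilon}$ (for which $\cos\phi_{p_\epsilon}=(p-2)/(p-2\epsilon)$) and invoking the reflection-formula estimate $|\Gamma(1-is)|^{-1}\lesssim e^{\pi|s|/2}(1+|s|)^{-1/2}$, after extending by density via Lemma~\ref{l: fin prop. markov}, I obtain for every $\epsilon\in(0,1/2)$
\[
\|\cA^{is}f\|_p\lesssim \frac{(p-1)(p-2\epsilon)}{\epsilon(p-2)}\cdot\frac{e^{|s|\phi^*_{p_\epsilon}}}{(1+|s|)^{1/2}}\|f\|_p.
\]
A mean-value estimate for $\arcsin$ (bounding the integrand at the upper endpoint) yields the auxiliary inequality $\phi^*_{p_\epsilon}-\phi^*_p\lesssim\epsilon(p-2)/(p\sqrt{p-1})$ on $\epsilon\in(0,1/2)$, $p>2$.

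\textbf{Two regimes and conclusion.} The two cases in the ``moreover'' statement correspond to the two regimes in the optimization over $\epsilon$. When $|s|>2p^*\sqrt{p^*-1}/(p^*-2)$, I set $\epsilon=p\sqrt{p-1}/((p-2)|s|)<1/2$; the auxiliary inequality forces $|s|(\phi^*_{p_\epsilon}-\phi^*_p)\lesssim 1$, while $(p-2\epsilon)/(\epsilon(p-2))\lesssim|s|/\sqrt{p-1}$, producing $\|\cA^{is}\|_p\lesssim\sqrt{p^*-1}(1+|s|)^{1/2}e^{|s|\phi^*_p}$. In the complementary range, and assuming $p^*\geq 3$, I take $\epsilon$ close to $1/2$: the MVT bound combined with the constraint on $|s|$ gives $|s|(\phi^*_{p_\epsilon}-\phi^*_p)\lesssim 1$, whereas $(p-2\epsilon)/(\epsilon(p-2))\sim 2(p-1)/(p-2)\leq 4$, producing $\|\cA^{is}\|_p\lesssim(p^*-1)(1+|s|)^{-1/2}e^{|s|\phi^*_p}$. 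The first (general) estimate with $p^*\geq 3$ then follows at once since $\sqrt{p^*-1}\leq p^*-1$ and $(1+|s|)^{-1/2}\leq (1+|s|)^{1/2}$; for $p^*\in(1,3)$ I complete the picture by Riesz--Thorin interpolation between the trivial bound $\|\cA^{is}\|_2=1$ and the first estimate just proved at $p^*=3$. The delicate point throughout is the joint calibration of $\epsilon$ and the rotation: $\epsilon$ must be small enough that $\phi^*_{p_\epsilon}$ remains close to $\phi^*_p$, yet not so small that the bilinear-embedding constant $1/(\epsilon\cos\phi_{p_\epsilon})$ destroys the desired dependence on $p$ and $|s|$; the MVT bound for $\phi^*_{p_\epsilon}-\phi^*_p$ is precisely what makes both requirements simultaneously achievable in each regime.
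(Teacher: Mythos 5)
Your contour choice, the $|\Gamma(1-is)|^{-1}\lesssim(1+|s|)^{-1/2}e^{\pi|s|/2}$ asymptotics, the key bound with constant $(p-1)(p-2\epsilon)/(\epsilon(p-2))$, the inequality $\phi^*_{p_\epsilon}-\phi^*_p\lesssim\epsilon(p-2)/(p\sqrt{p-1})$, and the two choices of $\epsilon$ (namely $\epsilon\sim p\sqrt{p-1}/((p-2)|s|)$ for large $|s|$ and $\epsilon$ of fixed size for small $|s|$, $p\geq3$) reproduce the paper's argument essentially verbatim, and those parts are correct. The gap is in your last step, the first (general) estimate for $2<p^*<3$. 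You propose to obtain it by Riesz--Thorin interpolation between the trivial $L^2$ bound and the estimate at $p^*=3$, apparently for all $s$. If $1/p=(1-\theta)/2+\theta/3$, then $1-2/p=\theta/3$, and convexity of $\arcsin$ on $[0,1]$ gives
\begin{equation*}
\phi^*_p=\arcsin(\theta/3)\leq\theta\arcsin(1/3)=\theta\,\phi^*_3,
\end{equation*}
with strict inequality for $\theta\in(0,1)$. Interpolation therefore yields at best $\|\cA^{is}\|_p\lesssim(1+|s|)^{\theta/2}e^{\theta\phi^*_3|s|}$, whose exponential rate is \emph{strictly larger} than the claimed $\phi^*_p$, so for fixed $p\in(2,3)$ and $|s|\to\infty$ the stated bound $(p^*-1)(1+|s|)^{1/2}e^{\phi^*_p|s|}$ (with an absolute implied constant) cannot follow this way. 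In short, interpolating between two exponents degrades the angle, because $p\mapsto\phi^*_p$ is not affine along the interpolation scale.

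The fix stays inside your framework but needs one more observation: your large-$|s|$ regime (the choice $\epsilon=p\sqrt{p-1}/((p-2)|s|)<1/2$) is valid for every $p>2$, not only $p^*\geq3$, so for $2<p<3$ only the range $|s|\leq 2p\sqrt{p-1}/(p-2)$ remains. There, and only there, an interpolation argument closes the case -- but you must check that the angle discrepancy is harmless on that bounded range, e.g. $(\theta\phi^*_3-\phi^*_p)|s|\leq(3\arcsin(1/3)-1)\frac{p-2}{p}\cdot\frac{2p\sqrt{p-1}}{p-2}\lesssim1$, so the offending exponential is absorbed into the constant; without such a verification the step is not justified. The paper handles this range slightly differently: it interpolates the trivial $L^2$ bound with the bilinear-embedding estimate at exponent $r=2p$, $\phi=0$, $\epsilon=1/3$, obtaining $\|\cA^{is}\|_p\lesssim p-1$ outright on $|s|\leq 2p\sqrt{p-1}/(p-2)$, which is then trivially dominated by $(p-1)(1+|s|)^{1/2}e^{\phi^*_p|s|}$; this avoids any comparison of angles altogether.
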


\begin{proof}
By duality it suffices to consider $p\in(2,\infty)$. Note that $\cA^{is}=\widetilde M_s(\cA)$, $s\in\R$, where
$M_s(z)=\Gamma(1-is)^{-1}z^{-is}$ for $z\in\C$, $\Re z>0$. Evidently $M_s\in H^\infty(\bS_\theta)$ for all $\theta\in (0,\pi/2)$ and, by the asymptotic estimate $\mod{\Gamma(1-is)}\thicksim \mod{s}^{1/2}e^{-\pi\mod{s}/2}$ as $\mod{s}\rightarrow\infty$,
$$
\nor{M_s}_{L^\infty(\gamma_\theta)}
\leqsim (1+|s|)^{-1/2}e^{\frac{\pi|s|}{2}+s\theta}
$$
for all $\theta\in (-\pi/2,\pi/2)$ and $s\in\R$.

Fix $s\in\R$, $\epsilon\in (0,1/2)$, $r>2$ and $\phi\in [0,\phi_{r_\epsilon}]$.
By combining Lemma~\ref{l: sub. formula} (applied with $\theta=-\phi\,\text{sign}\,s$) with
Lemma~\ref{l: fin prop. markov} ({\rm ii}) and Theorem~\ref{t: sycn bil embedding}, we obtain
\begin{equation}
\label{eq: est imaginary}
\norm{\cA^{is} f}{r}\leqsim  \frac{r-1}{\epsilon \cos\phi}(1+|s|)^{-1/2}e^{\phi^*|s|}\norm{f}{r} \hskip 30pt f\in\overline{{\rm R}(\cA_r)}\,.
\end{equation}
At this point we need to consider two cases, when $|s|$ is large and small, respectively.
The decision on where to split is made according to the elementary inequality
\begin{equation}
\label{eq: est. phi_p epsilon}
\phi^*_{p_\epsilon}\leq \phi^*_p+\frac{2(p-2)}{p\sqrt{p-1}}\,\epsilon
\hskip 40pt \forall p>2,\ \forall \epsilon\in(0,1/2)\,.
\end{equation}

Suppose first that $|s|>2p\sqrt{p-1}/(p-2)$. Then we apply \eqref{eq: est imaginary} with $r=p$,
$$
\epsilon
=\frac{p\sqrt{p-1}}{p-2}\cdot\frac{1}{1+|s|}
$$
and $\phi=\phi_{p_\epsilon}$.
From \eqref{eq: est. phi_p epsilon} and the estimate $\cos\phi_{p_\epsilon}>\cos\phi_p=1-2/p$, we obtain
\begin{equation*}
\label{eq: 12}
\norm{\cA^{is} f}{p}\leqsim \sqrt{p-1}
(1+|s|)^{1/2}e^{\phi^*_{p}|s|}\norm{f}{p}\hskip 30pt f\in\overline{{\rm R}(\cA_p)}\,.
\end{equation*}

Suppose now that $|s|\leq 2p\sqrt{p-1}/(p-2)$. We separate further two cases:
\begin{itemize}
\item
If $2<p\leq 3$, then by interpolating the trivial $L^2$-estimate with \eqref{eq: est imaginary} applied with $r=2p$, $\phi=0$ and $\epsilon=1/3$, we obtain
\begin{equation*}
\label{eq: 13}
\norm{\cA^{is}f}{p}\leqsim (p-1)\|f\|_p \leqsim (p-1)(1+|s|)^{1/2}e^{\phi^*_{p}|s|}\norm{f}{p}\hskip 30pt  f\in \overline{{\rm R}(\cA_p)}\,.
\end{equation*}
\item
If $p\geq 3$
then $\cos\phi_{p_\epsilon}\geq\cos\phi_3= 1/3$. Therefore, by \eqref{eq: est imaginary} applied with $r=p$, $\epsilon=1/3$ and $\phi=\phi_{p_\epsilon}$, and from \eqref{eq: est. phi_p epsilon}, we obtain
$$
\|\cA^{is}f\|_p\leqsim (p-1)(1+|s|)^{-1/2}e^{\phi^*_p|s|}\|f\|_p\hskip 30pt f\in \overline{{\rm R}(\cA_p)}\,.
$$
\end{itemize}
The proposition now follows.
\end{proof}

\subsection*{Proof of Theorem~\ref{c: 1.1}.}
By combining the first inequality in Proposition~\ref{p: 5} with Proposition~\ref{t: 1.2}, applied with $\sigma=1/2$ and $\phi^*=\phi^*_p$, we obtain that $m(\cA)$ is bounded on $\overline{\cR(\cA_p)}$.

We now estimate the norm of $m(\cA)$ on $\overline{\cR(\cA_p)}$ by means of Propositions~\ref{t: 1.2 bis}. By duality we may restrict ourselves to the case $p>2$. Since $\cos\phi^*_p=2\sqrt{p-1}/p$, Lemma~\ref{l: 11} applied first with $\phi^*=\phi^*_p$ and $\alpha=0$, and then with $\phi^*=\phi^*_p$ and $\alpha=J$ implies
\begin{equation*}
\nor{\mathscr M_{s}m_\tau}_{L^\infty(\R_+)}
\ \leqsim_J\
\tau^{-1} p^{\tau/2}e^{-{\phi_p^*}|s|}
\min\left\{1,\bigg(\frac{\sqrt{p}}{1+|s|}\bigg)^J\right\}
\|m\|_{\phi_p^*;J}.
\end{equation*}
By combining the inequality above with Proposition~\ref{l: laplace}, the second estimate in Proposition~\ref{p: 5} and \eqref{eq: best cons}, we eventually obtain, for any $\tau\in (0,1)$,
$$
\|m(\cA)f\|_p\leqsim_J\tau^{-1}p^{\tau/2}p^{9/4}\|m\|_{\phi^*_p;J}\hskip 30pt f\in{\rm R}(\cA_p)\,.
$$
To conclude the proof of the theorem, just minimize the right--hand side of the inequality above with respect to $\tau\in (0,1)$ and observe that, by Lemma~\ref{l: fin prop. markov},
$$
m(\cA)f=m(0)\cP_0f+m(\cA)(I-\cP_0)f\,,\ \ \ \ f\in L^2(\Omega,\nu)\cap L^p(\Omega,\nu)\,,
$$
where $\cP_0$ is a contraction on $L^p(\Omega,\nu)$ and $(I-\cP_0)f\in \overline{{\rm R}(\cA_p)}$.
\qed

\medskip
As already remarked in the introduction, by combining our results with Cowling's subordination theory \cite[Section 3]{cowling}, we improve his maximal theorems \cite[Theorems 7,8]{cowling} as follows.
For $N\in\N$ set $P_N(\lambda)=\sum^N_{n=0}(-\lambda)^n/n!$. For every $\delta>0$, $N\in\N$  and $\phi\in [0,\pi/2)$ define the maximal operators
$$
\cM^\phi f=\sup\Mn{|T_zf|}{z\in \overline{\bS}_\phi}
\hskip 13pt
\text{and}
\hskip 13pt
\cM^\phi_{N,\delta}f=\sup\Mn{|z|^{-\delta}|T_zf-P_N(z\cA)f|}{z\in \overline{\bS}_\phi}.
$$
\begin{theorem}
\label{t: cowling imp}
Suppose that $1<p<\infty$.
\begin{itemize}
\item[({\rm i})] If $0\leq\phi< \phi_p$, then there exists $C=C(\phi,p)>0$ such that
$$
\|\cM^\phi f\|_p\leq C\|f\|_p\,,
$$
for all $f\in L^2(\Omega,\nu)\cap L^p(\Omega,\nu)$.
\item[({\rm ii})]
If $\phi=\phi_p$, and $1\leq N<\delta<N+1$, then there exists $C=C(\delta,p)>0$ such that
$$
\|\cM^\phi_{N,\delta}f\|_p\leq C \|\cA^\delta f\|_p\,,
$$
for all $f$ in $L^2(\Omega,\nu)$ such that $f$ and $\cA^\delta f$ are in $L^p(\Omega,\nu)$.
\end{itemize}
\end{theorem}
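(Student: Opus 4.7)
The plan is to substitute our sharper imaginary-power estimate (Proposition~\ref{p: 5}) into Cowling's subordination arguments in \cite[Section~3]{cowling}. His arguments amount to an abstract machine that takes as input a growth estimate of the form
\begin{equation*}
\|\cA^{is}\|_{\cB(\overline{{\rm R}(\cA_p)})}\leq C(1+|s|)^\sigma e^{\theta|s|},\qquad s\in\R,
\end{equation*}
with $\theta\in[0,\pi/2)$ and $\sigma\geq0$, and produces a maximal theorem of type~(i) in every sector $\bS_\phi$ with $\phi<\pi/2-\theta$, together with a boundary maximal theorem of type~(ii) at $\phi=\pi/2-\theta$ whenever $\delta>\sigma$. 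Our Proposition~\ref{p: 5} provides such an estimate with the sharp values $\theta=\phi^*_p$ and $\sigma=1/2$, so that Cowling's conclusions, applied with these parameters, yield the two maximal theorems in the sector $\bS_{\pi/2-\phi^*_p}=\bS_{\phi_p}$.

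More concretely, after reducing to $\overline{{\rm R}(\cA_p)}$ via the decomposition $f=\cP_0 f+(I-\cP_0)f$ of Lemma~\ref{l: fin prop. markov} (the kernel component is $\cA$-invariant, so $T_z\cP_0 f=\cP_0 f$ is trivially controlled), the starting point is the Mellin representation $e^{-w}=(2\pi i)^{-1}\int_{c-i\infty}^{c+i\infty}\Gamma(s)w^{-s}\wrt s$, valid for $\Re w>0$ and $c>0$. Substituting $w=z\cA$, applying the spectral theorem, and shifting the contour leftward across the simple poles $s=0,-1,\ldots,-N$ of $\Gamma$ (whose residues sum to $P_N(z\cA)f$) onto the line $\Re s=-\delta$ with $\delta\in(N,N+1)$, one obtains the pointwise identity
\begin{equation*}
T_z f-P_N(z\cA)f=\frac{z^\delta}{2\pi}\int_\R\Gamma(-\delta+it)\,z^{-it}\,\cA^{\delta-it}f\wrt t\,.
\end{equation*}
Dividing by $|z|^\delta$, using $|z^{-it}|=e^{t\arg z}\leq e^{\phi_p|t|}$ on $\overline{\bS}_{\phi_p}$, and pulling the supremum over $z$ inside the $t$-integral, we pointwise dominate $\cM^{\phi_p}_{N,\delta}f$ by $(2\pi)^{-1}\int_\R|\Gamma(-\delta+it)|e^{\phi_p|t|}|\cA^{\delta-it}f|\wrt t$. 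Taking $L^p$-norms, invoking Proposition~\ref{p: 5} applied to $\cA^\delta f$ together with Stirling's asymptotic $|\Gamma(-\delta+it)|\leqsim(1+|t|)^{-\delta-1/2}e^{-\pi|t|/2}$, and noting $\phi_p+\phi^*_p=\pi/2$, the resulting scalar integrand collapses to $(1+|t|)^{-\delta}$, which is integrable on $\R$ precisely when $\delta>1$; this proves~(ii).

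Part~(i) follows from a parallel argument using the same Mellin representation but with a contour at $\Re s=c$ for a small $c>0$ (thus without polynomial subtraction), so that $\cA^{-c-it}f$ appears in place of $\cA^{\delta-it}f$; the strict inequality $\phi<\phi_p$ now supplies a genuine exponential decay factor $e^{-(\phi_p-\phi)|t|}$ in the integrand and makes integrability automatic without any derivative assumption on $f$. The extra factor $|z|^{-c}$ is harmlessly handled either by a dyadic splitting according to whether $|z|\leq 1$ or $|z|>1$ (using Stein's classical real-time maximal theorem for the former piece) or by first proving the estimate for $f\in{\rm D}(\cA^c)$ and then passing to a dense approximation. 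The main obstacle is the careful vector-valued Fubini setup that legitimises the exchange of supremum and $t$-integral on $L^p(\Omega,\nu)$, together with the rigorous interpretation of the fractional powers $\cA^{\delta-it}$ and $\cA^{-c-it}$ on $\overline{{\rm R}(\cA_p)}$; these technical points are treated in detail by Cowling, and in our setting no new ideas are needed beyond substituting $\phi^*_p$ for the larger angle used in his input estimate.
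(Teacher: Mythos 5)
Your top-level plan is exactly the paper's proof: Theorem~\ref{t: cowling imp} is obtained by running Cowling's arguments for \cite[Theorems 7, 8]{cowling} verbatim, with Proposition~\ref{p: 5} substituted for his weaker imaginary-power estimate \cite[Corollary 1]{cowling}, and your sketch of part (ii) — the Mellin representation of $e^{-w}$, the contour shift across the poles $s=0,-1,\dots,-N$ producing $P_N(z\cA)$, Minkowski's inequality, Stirling, and the cancellation $\phi_p+\phi^*_p=\pi/2$ leaving $(1+|t|)^{-\delta}$ with $\delta>1$ — is essentially Cowling's argument and is sound.

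The self-contained argument you offer for part (i), however, has a genuine gap. Putting the contour at $\Re s=c>0$ produces $\cA^{-c-it}f$ and a factor $|z|^{-c}$, and neither is controlled by $\|f\|_p$: negative fractional powers of $\cA$ are in general unbounded on $L^p$ (and on $\overline{{\rm R}(\cA_p)}$ there is no uniform bound for $\cA^{-c}$), while $|z|^{-c}$ blows up as $z\to 0$, so pulling the supremum inside the $t$-integral gives nothing. Your two proposed fixes do not repair this: dominating $\sup\{|T_zf|: z\in\overline{\bS}_\phi,\ |z|\leq 1\}$ by Stein's real-time maximal function is false pointwise — controlling the complex-time supremum by real-time maximal functions is precisely what is being proved, so that step is circular — and a density argument cannot upgrade an estimate of the form $C\,\|\cA^{-c}f\|_p$ (or one valid only for $f$ in the range of $\cA^{c}$) to $C\,\|f\|_p$, since the constant degenerates on the dense class. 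Cowling's actual proof of his Theorem 7 is structured differently: the subordination keeps a compensating factor of the real-time semigroup attached (one expands $T_{u(1+i\tau)}$ in terms of bounded functions of $u\cA$ composed with $T_{cu}$, and ultimately dominates the sectorial maximal function by an integral, against a kernel decaying like $e^{-(\pi/2-\phi)|s|}$, of Hopf--Dunford--Schwartz maximal functions of $\cA^{is}f$), so that no negative powers of $\cA$ ever act on a general $f\in L^p$; it is at the final step that Proposition~\ref{p: 5} enters and yields convergence exactly for $\phi<\phi_p$. Since you in any case defer to \cite{cowling} for the details, the theorem does follow as you claim, but the route you sketch for (i) would fail if carried out literally and should be replaced by Cowling's semigroup-compensated subordination.
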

\begin{proof}
The proof is exactly as in \cite[Theorems 7,8]{cowling} except that one should instead of \cite[Corollary 1]{cowling} use Proposition~\ref{p: 5}.
\end{proof}
The theorem above, together with a standard argument of Stein \cite[pp. 72--82]{stein}, implies the following result, which is an improvement over \cite[Corollaries 2,3]{cowling}.

\begin{corollary}
\label{c: pointwise conv}
Suppose that $1<p<\infty$. If either $0\leq \phi<\phi_p$ and $f$ is in $L^p(\Omega,\nu)$, or $\phi=\phi_p$, $1\leq N< \delta<N+1$ and $f, \cA f,\dots, \cA^{N}f$ and $\cA^\delta f$ are in $L^p(\Omega,\nu)$, then $T_zf$ converges to $f$ pointwise almost everywhere as $z$ tends to $0$ in $\overline{\bS}_\phi$.
\end{corollary}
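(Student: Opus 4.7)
The plan is to follow Stein's classical strategy \cite[pp.~72--82]{stein}: the maximal bounds of Theorem~\ref{t: cowling imp} are combined with pointwise convergence on a dense subclass to yield pointwise a.e.\ convergence for all admissible $f$. It is convenient to prove part (ii) first, then deduce part (i) from it.

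For part (ii), assume $f,\cA f,\ldots,\cA^N f,\cA^\delta f\in L^p(\Omega,\nu)$. Theorem~\ref{t: cowling imp}(ii) implies $\cM^{\phi_p}_{N,\delta}f\in L^p$, so $\cM^{\phi_p}_{N,\delta}f(x)<\infty$ for a.e.~$x$. From the definition of $\cM^{\phi_p}_{N,\delta}$,
$$
|T_z f(x)-P_N(z\cA)f(x)|\leq |z|^{\delta}\,\cM^{\phi_p}_{N,\delta}f(x)\longrightarrow 0\quad\text{as } z\to 0\text{ in }\overline{\bS}_{\phi_p},
$$
for a.e.~$x$. On the other hand, each $\cA^n f(x)$ is finite a.e., so
$$
P_N(z\cA)f(x)=\sum_{n=0}^{N}\frac{(-z)^n}{n!}\cA^n f(x)\longrightarrow f(x)
$$
as $z\to 0$. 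Combining these observations yields $T_z f(x)\to f(x)$ a.e.

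For part (i), fix $\phi\in[0,\phi_p)$ and $f\in L^p(\Omega,\nu)$, and set
$$
\Lambda f(x) := \limsup_{z\to 0,\, z\in\overline{\bS}_\phi}|T_z f(x)-f(x)|.
$$
For any $g\in L^p$ the trivial estimate $|T_z f-f|\leq |T_z g-g|+|T_z(f-g)|+|f-g|$ gives
$$
\Lambda f(x)\leq \Lambda g(x)+\cM^\phi(f-g)(x)+|f(x)-g(x)|.
$$
Choose as dense subspace of $L^p$ the set $\cD=(I+\cA_p)^{-(N+1)}(L^p)$ for some fixed integer $N\geq 1$; every $g\in\cD$ satisfies $\cA^n g\in L^p$ for $n=0,\ldots,N+1$ and, by complex interpolation, $\cA^\delta g\in L^p$ for all $\delta\in(N,N+1)$. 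The density of $\cD$ in $L^p$ is standard semigroup theory. Applying part (ii) to $g\in\cD$ yields $\Lambda g=0$ a.e., since $\overline{\bS}_\phi\subset\overline{\bS}_{\phi_p}$. Consequently, Theorem~\ref{t: cowling imp}(i) and Chebyshev's inequality give
$$
\nu(\{\Lambda f>2\lambda\})\leq \nu(\{\cM^\phi(f-g)>\lambda\})+\nu(\{|f-g|>\lambda\})\leqsim \frac{\|f-g\|_p^p}{\lambda^p}
$$
for every $\lambda>0$. Letting $\|f-g\|_p\to 0$ along $g\in\cD$ we obtain $\nu(\{\Lambda f>2\lambda\})=0$ for all $\lambda>0$, so $\Lambda f=0$ a.e., which is the required pointwise convergence.

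The main obstacle is to exhibit pointwise convergence on a dense subclass, which is handled neatly by part (ii): the polynomial-correction maximal bound of Theorem~\ref{t: cowling imp}(ii) forces the remainder $T_z f-P_N(z\cA)f$ to vanish a.e., while the explicit polynomial part converges trivially. The remaining density/maximal argument in part (i) is routine Stein; the only subtle point is that the dense subspace must be regular enough (via $(I+\cA_p)^{-(N+1)}$-smoothing together with complex interpolation for the fractional power) to fall within the scope of part (ii).
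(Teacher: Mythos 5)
Your argument is correct and follows exactly the route the paper intends: Corollary~\ref{c: pointwise conv} is presented as a consequence of Theorem~\ref{t: cowling imp} via Stein's standard maximal-function/dense-subclass argument \cite[pp. 72--82]{stein}, which is precisely what you carry out (part (ii) directly from the polynomially corrected maximal bound, part (i) by the limsup--Chebyshev density argument, with the moment inequality for fractional powers giving $\cA^\delta g\in L^p$ on the regularized class). Just note that Theorem~\ref{t: cowling imp} is stated for $f\in L^2(\Omega,\nu)\cap L^p(\Omega,\nu)$, so you should take the dense class inside $L^2\cap L^p$, e.g.\ $(I+\cA_p)^{-(N+1)}\bigl(L^2\cap L^p\bigr)$, and extend the bound for $\cM^\phi$ from $L^2\cap L^p$ to all of $L^p$ by routine approximation, exactly as in Stein.
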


\section{The heat-flow method}
\label{s: heat-flow}

In this section we illustrate the technique we will utilize for proving the bilinear embedding in Theorem~\ref{t: sycn bil embedding}. The exposition will be rather descriptive, aimed at giving the idea of the proof without dwelling on technical details which will be addressed later.

Fix $p\in (2,\infty)$ and $Q:\R^4\rightarrow \R_+$ which is of class $C^1$. We shall say that the flow associated with $Q$ (and with $\cA$) is {\it regular on $L^p(\Omega,\nu)\times L^q(\Omega,\nu)$} if for every $f\in L^p(\Omega,\nu)$, $g\in L^q(\Omega,\nu)$ and $\phi\in (-\phi_p,\phi_p)$, the function $\cE:[0,\infty)\mapsto\R_+$, defined by
$$
\cE(t)=\int_\Omega Q\left(T_{te^{i\phi}}f, T_{te^{-i\phi}}g\right)\wrt\nu\,,
$$
is continuous on $[0,\infty)$, differentiable on $(0,\infty)$ with a continuous derivative and
\begin{equation}
\label{eq: flow der}
\cE'(t)=\int_\Omega \frac{\pd}{\pd  t}Q(T_{te^{i\phi}}f,T_{te^{-i\phi}}g)\wrt\nu\,.
\end{equation}

We are interested in finding a nonnegative function $Q\in C^1(\R^4)$, possibly depending on $p$ and $\phi$,
whose corresponding flow admits, for every generator $\cA$, the following properties:
\begin{itemize}
\item
regularity on $L^p\times L^q$;
\item
initial value bound, i.e., the existence of $A_0=A_0(\phi,p)>0$ such that
\begin{equation}
\label{eq: heatflow p2}
\cE(0)\leq A_0(\|f\|^p_p+\|g\|^q_q)\,,
\hskip 50pt \forall f\in L^p, g\in L^q;
\end{equation}
\item
``quantitative monotonicity'', i.e.,
the existence of $B_0=B_0(\phi,p)>0$ such that
\begin{equation}
\label{eq: heatflow p1}
-\cE'(t)
\geq B_0\mod{\Sk{\cA T_{te^{i\phi}}f}{T_{te^{-i\phi}}g}}\,,
\hskip 50pt \forall f\in L^p, g\in L^q, t>0.
\end{equation}
\end{itemize}
For if these conditions are fulfilled, then
$$
B_0\int^\infty_0
\mod{\Sk{\cA T_{te^{i\phi}}f}{T_{te^{-i\phi}}g}}
\wrt t\leq -\int^{\infty}_{0}\cE'(t)\wrt t\leq \cE(0)\leq A_0(\|f\|^p_p+\|g\|^q_q)\,.
$$
By replacing $f$ with $\lambda f$ and $g$ with $g/\lambda$ and minimizing with respect to $\lambda>0$ the right-hand side of the inequality above, we obtain the bilinear embedding,
$$
\int_{\gamma_\phi}\mod{\Sk{\cA T_{z}f}{T_{\bar z}g}}
\wrt |z|\leqslant C(\phi,p)\|f\|_p\|g\|_q,
$$
for all $f\in L^p(\Omega,\nu)$ and $g\in L^q(\Omega,\nu)$, where $C(\phi,p)=q(p-1)^{1/p}A_0/B_0$.\\

Let us elaborate a bit on the conditions above. Write $\zeta=(\zeta_1,\zeta_2)$, $\eta=(\eta_1,\eta_2)$ and define
$$
\partial_\zeta=\frac{1}{2}\left(\partial_{\zeta_1}-i\partial_{\zeta_2}\right)
\hskip 30pt \text{and} \hskip 30pt
\partial_\eta=\frac{1}{2}\left(\partial_{\eta_1}-i\partial_{\eta_2}\right)\,.
$$
Suppose that, for $\zeta,\eta\in\R^2$, the function $Q$ satisfies the following estimates:

\begin{equation}
\label{es: 1}
0\leq Q(\zeta,\eta)\leq A_0\left(|\zeta|^p+|\eta|^q\right)
\end{equation}
and
\begin{equation}
\label{es: 2}
\mod{(\partial_{\zeta}Q)(\zeta,\eta)}\leqsim\,|\zeta|^{p-1}+|\eta|
\hskip 30pt
\text{and}
\hskip 30pt
\mod{(\partial_{\eta}Q)(\zeta,\eta)}\leqsim\,|\eta|^{q-1}+|\zeta|\,.
\end{equation}
Then \eqref{es: 1} and \eqref{es: 2} together with Proposition~\ref{t: 1} give the regularity on $L^p\times L^q$ of the flow associated with $Q$. Moreover \eqref{es: 1} clearly implies \eqref{eq: heatflow p2}.

Now let us turn towards \eqref{eq: heatflow p1}.
We may expand \eqref{eq: flow der} as
$$
\aligned
-\cE'(t)
=2\Re\int_\Omega & \Big[e^{i\phi}\left(\partial_\zeta Q\right)(T_{te^{i\phi}}f,T_{te^{-i\phi}}g)\cA T_{te^{i\phi}}f\\
&+ e^{-i\phi}\left(\partial_\eta Q\right)(T_{te^{i\phi}}f,T_{te^{-i\phi}}g)\cA T_{te^{-i\phi}}g\Big]\wrt\nu\,.
\endaligned
$$
Thus, condition \eqref{eq: heatflow p1} can be restated as
\begin{equation}
\label{eq: heatflow p1 bis}
B_0\mod{\int_\Omega \cA f \cdot \overline{g}\wrt \nu}\leq 2\Re\int_\Omega \left(e^{i\phi}(\partial_\zeta Q)(f,g)\cA f+ e^{-i\phi}(\partial_\eta Q)(f,g)\cA g\right)\wrt\nu\,,
\end{equation}
for all $f\in {\rm D}(\cA_p)$, $g\in{\rm D}(\cA_q)$.

\subsection*{Convexity of $Q$: The Riemannian case.} We are interested in pointwise conditions on the second order partial derivatives of $Q$ which imply \eqref{eq: heatflow p1 bis}. We first consider the special case when $\nu=\mu$ is the Riemannian measure on a complete Riemannian manifold $\Omega=M$ and $\cA=\Delta$ is the nonnegative Laplace--Beltrami operator on $M$.

If $\phi\in (-\pi/2,\pi/2)$, set
\begin{equation}
\label{planinic}
\cO_\phi=\left[ {\begin{array}{rr}
  \cos\phi&-\sin\phi\\
  \sin\phi&\cos\phi
 \end{array} } \right]\in\R^{2,2}
\hskip 30pt
\text{and}
\hskip 30pt
\cU_\phi=\left[ {\begin{array}{ll}
 \cO_\phi&0\\
0&\cO_{-\phi}
 \end{array} } \right]\in\R^{4,4}\,.
\end{equation}
Suppose for a moment that $Q\in C^2(\R^4)$.
Let $\cH(Q)$ denote the Hessian matrix of $Q$.
Define
\begin{equation}
\label{eq: def of R_phi}
\cR_\phi(Q)=\frac{1}{2}
\left(
\cU_\phi^T\cdot \cH(Q)
+
\cH(Q)\cdot
\cU_\phi\right).
\end{equation}
The proof of \cite[Theorem 3.5]{S} shows that $C^\infty_c(M)$ is a core for ${\rm D}(\Delta_r)$ when $1<r<\infty$, so that it suffices to consider \eqref{eq: heatflow p1 bis} for $f,g\in C^\infty_c(M)$. Then, an integration by parts on the right-hand side of \eqref{eq: heatflow p1 bis} gives
$$
2\Re\int_{M} \left(e^{i\phi}(\partial_\zeta Q)(f,g)\Delta f+ e^{-i\phi}(\partial_\eta Q)(f,g)\Delta g\right)\wrt \mu=\int_{M}\cR_\phi(Q)\left[(f,g);(\wrt f,\wrt g)\right]\wrt \mu,
$$
where $\wrt$ denotes the differential (there is a small yet obvious abuse of notation as to the way the identity
\eqref{d: 2} was used above). We remark that a similar identity is valid also in the case when $\Delta$ is the Ornstein--Uhlenbeck operator on a  Wiener space.

It is from here not difficult to see that we could prove \eqref{eq: heatflow p1 bis} if we had the following property of $Q$:
there exist $B_0=B_0(p,\phi)>0$ and $\tau:\R^4\rightarrow (0,\infty)$ such that
for all $\xi\in\R^4$ and $\omega=(\omega_1,\omega_2)\in\R^2\times\R^2$,
$$
\cR_\phi(Q)[\xi;\omega]\geqslant B_0\left(\tau(\xi)|\omega_1|^2+\tau(\xi)^{-1}|\omega_2|^2\right)\,.
$$
Owing to a theorem by Treil, Volberg and the second author \cite[Theorem 2.1]{DTV}, the inequality above is equivalent to the ``weaker" property
\begin{equation}
\label{eq: 16}
\cR_\phi(Q)[\xi;\omega]\geqsim_{p,\phi}\ |\omega_1||\omega_2|\,.
\end{equation}
We can relax the condition $Q\in C^2(\R^4)$ by requiring that $Q$ be of class $C^1$ and almost everywhere twice differentiable with second-order weak derivatives. Then we can consider the flow corresponding to a regularization of $Q$ by standard mollifiers (see Corollaries~\ref{t: 4} and \ref{l: sycn final stima}).

\bigskip
When $\cA$ is merely a generator of a symmetric contraction semigroup but not a differential-type operator, it is not so obvious that \eqref{eq: 16} implies \eqref{eq: heatflow p1 bis}. However, this is actually the case and will be proved in Section~\ref{s: proof bilinear}.

When $\phi=0$, similar heat-flow techniques corresponding to Bellman functions have so far been employed in the Euclidean case \cite{PV, NV, DV, DV-Sch, DV-Kato} and recently also in the Riemannian case \cite{CD}. For a different perspective on heat-flow techniques, various examples and references we refer the reader to the papers by Bennett et al. \cite{B, BCCT}.

\subsection*{Summary}
The proof of Theorem~\ref{t: sycn bil embedding} reduces to finding for every $p>2$ and $\phi\in(-\phi_p,\phi_p)$ a function $Q\in C^1(\R^4)$ such that \label{summary}
\begin{itemize}
\item[({\rm i})]
$0\leqslant Q(\zeta,\eta)\leqsim_{p,\phi} |\zeta|^p+|\eta|^q,\hskip 30pt \forall\zeta,\eta\in \R^2;$
\item[({\rm ii})]
$\mod{(\partial_{\zeta}Q)(\zeta,\eta)}\leqsim\,|\zeta|^{p-1}+|\eta|$
\hskip 7pt and \hskip 7pt
$\mod{(\partial_{\eta}Q)(\zeta,\eta)}\leqsim\,|\eta|^{q-1}+|\zeta|,\hskip 30pt \forall\zeta,\eta\in \R^2$;
\item[({\rm iii})] the function $Q$ is almost everywhere twice differentiable with second-order weak derivatives and
$$
\cR_\phi(Q)[\xi;\omega]\geqsim_{p,\phi}\ |\omega_1||\omega_2|\,,
$$
for almost every $\xi\in\R^4$ and all $\omega=(\omega_1,\omega_2)\in\R^2\times\R^2$.
\end{itemize}

It is not an easy matter to find functions $Q$ satisfying (i), (ii) and (iii) with ``optimal'' constants, even when $\phi=0$. It turned out \cite{NT,DV-Sch} that, if $\phi=0$, we can take for $Q$ the so-called Nazarov--Treil Bellman function which will be defined in the next section.
We show in this paper that the very same Nazarov--Treil Bellman function satisfies the condition (iii) in the {\sl whole} range of $\phi$ we are interested in, i.e., $(-\phi_p,\phi_p)$; see Theorem~\ref{t: convex} for the precise statement.

\section{Nazarov--Treil Bellman function}
\label{s: bellman}
In this section we fix $p\in (2,\infty)$ and $\epsilon\in (0,1/2)$. Set
\begin{equation}
\label{eq: def delta}
\delta=\delta(p,\e)=\frac{2q(q-1)\epsilon}{85}
\end{equation}
and define the Nazarov--Treil Bellman function $Q: \R^2\times \R^2\longrightarrow \R_+$ by
\begin{equation}
\label{eq: def bellman}
Q(\zeta,\eta)=
|\zeta|^{p}+|\eta|^{q}+\delta
\left\{
\aligned
& |\zeta|^2|\eta|^{2-q} & ; & \ \ |\zeta|^p\leq |\eta|^q\\
& \frac{2}{p}\,|\zeta|^{p}+\left(\frac{2}{q}-1\right)|\eta|^{q}
& ; &\ \ |\zeta|^p\geq |\eta|^q\,.
\endaligned\right.
\end{equation}
The reasons for the particular choice of $\delta$ in \eqref{eq: def delta} will only become apparent later; see Remark~\ref{r: reasons for delta}.

The origins of $Q$ lie in the paper of F. Nazarov and S. Treil \cite{NT}.
A modification of their function was later applied by A. Volberg and the second author in \cite{DV,DV-Sch}. Here we use a simplified variant which comprises only two variables. It was introduced in \cite{DV-Kato} and used by the present authors in \cite{CD}. The function $Q$ above is essentially the same as in \cite{DV-Kato,CD}, except for the fact that here $\delta$ also depends on a new parameter -- $\e$.

The construction of the original Nazarov--Treil function in \cite{NT} was one of the earliest examples of the so-called Bellman function technique, which was introduced in harmonic analysis shortly beforehand by Nazarov, Treil and Volberg \cite{NTV}.  Earlier, the Bellman functions have implicitly appeared in the work of Burkholder \cite{Bu1}, see also \cite{Bu2,Bu4}. If interested in the genesis of Bellman functions and the overview of the method, the reader is also referred to Volberg et al. \cite{NTV1,V,NT} and Wittwer \cite{W}.
The method has seen a whole series of applications, mostly in Euclidean harmonic analysis.

In the course of the last few years, the Nazarov--Treil function considered here was found to possess nontrivial properties that reach much beyond the need for which it was originally constructed in \cite{NT}.
These properties are used for proving several variants of the bilinear embedding. For example, such was the case in \cite{DV-Sch, DV-Kato} with general Schr\"odinger  operators.
As already mentioned, in the present paper we shall prove another convexity property of $Q$ (see Theorem~\ref{t: convex}).

\medskip
The following result is a direct consequence of the definition of $Q$.
\begin{proposition}\label{p: 3}
The function $Q$ belongs to $C^1(\R^4)$ and is of order $C^2$ everywhere {\it except} on the set
$
\Upsilon_0=\Mn{(\zeta,\eta)\in  \R^2\times\R^2}{(\eta=0)\vee (|\zeta|^p=|\eta|^q)}\,.
$

For every $(\zeta,\eta)\in\R^2\times\R^2$,
$$
0\leqslant Q(\zeta,\eta)\leqslant (1+\delta)\left(|\zeta|^p+|\eta|^q\right),
$$
and
\begin{align*}
&2\mod{(\partial_{\zeta}Q)(\zeta,\eta)}\leqslant (p+2\delta)\max\{|\zeta|^{p-1},|\eta|\},\\
&2\mod{(\partial_{\eta}Q)(\zeta,\eta)}\leqslant (q+(2-q)\delta)|\eta|^{q-1}.
\end{align*}
\end{proposition}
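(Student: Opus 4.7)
Since $Q$ depends on $\zeta,\eta$ only through their moduli, I would write $Q(\zeta,\eta)=F(|\zeta|,|\eta|)$, where $F(u,v)$ is the same piecewise formula in $u,v\geq 0$. All three assertions then reduce to elementary properties of $F$, together with the remark that, since $p>2$, the function $|\zeta|^p$ is $C^2$ even at $\zeta=0$, while $|\eta|^q$ with $1<q<2$ is merely $C^1$ at $\eta=0$.

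\textbf{Regularity.} Each piece of $F$ is smooth on the interior of its defining region, so the only real issue is the seam $\Sigma=\{u^p=v^q\}$ and the axis $\{v=0\}$. On $\Sigma$ the conjugation identity $pq=p+q$ (equivalently $p/q=p-1$) drives everything: from $u^p=v^q$ one deduces
$$
u^2v^{2-q}=v^q=u^p,\qquad uv^{2-q}=u^{p-1},\qquad u^2v^{1-q}=v^{q-1}.
$$
Direct substitution then shows that both $F$ and its partial derivatives $F_u, F_v$ agree on the two sides of $\Sigma$, whence $Q$ is $C^1$ across $\Sigma$ by the chain rule. The second partial derivatives, on the other hand, disagree on $\Sigma$, which, combined with the failure of $C^2$-regularity of $|\eta|^q$ at $\eta=0$, accounts for the exceptional set $\Upsilon_0$.

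\textbf{Estimates.} The bound $Q\leq(1+\delta)(|\zeta|^p+|\eta|^q)$ is immediate on $R_2=\{|\zeta|^p\geq|\eta|^q\}$ since $2/p\leq 1$ and $2/q-1\leq 1$. On $R_1=\{|\zeta|^p\leq|\eta|^q\}$ the same identity $q/p=q-1$ gives $|\zeta|^2\leq|\eta|^{2(q-1)}$ and hence $|\zeta|^2|\eta|^{2-q}\leq|\eta|^q$. For the gradient bounds, direct computation yields
$$
\nabla_\zeta Q=\begin{cases} \bigl(p|\zeta|^{p-2}+2\delta|\eta|^{2-q}\bigr)\zeta & \text{on } R_1,\\ (p+2\delta)|\zeta|^{p-2}\zeta & \text{on } R_2, \end{cases}
$$
and analogously for $\nabla_\eta Q$. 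Using $|\zeta|\leq|\eta|^{q-1}$ (valid in $R_1$) one finds $|\zeta||\eta|^{2-q}\leq|\eta|$, hence $|\nabla_\zeta Q|\leq p|\zeta|^{p-1}+2\delta|\eta|\leq (p+2\delta)\max\{|\zeta|^{p-1},|\eta|\}$; the corresponding computation for $\nabla_\eta Q$ uses $|\zeta|^2|\eta|^{1-q}\leq|\eta|^{q-1}$. The Wirtinger identity $2|\partial_\zeta Q|=|\nabla_\zeta Q|$ (and its analog for $\eta$) then converts these into the stated estimates.

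\textbf{Main obstacle.} There is no serious obstacle here: the proposition is a direct computation. The only mildly delicate point is the $C^1$-matching of $F$ across $\Sigma$, but this is forced by the single algebraic identity $pq=p+q$, which also governs the gradient estimates in $R_1$.
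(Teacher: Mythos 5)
Your computation is correct and is exactly what the paper intends: the proposition is stated there as a direct consequence of the definition of $Q$, i.e.\ the same elementary verification (continuity and $C^1$-matching across $\{|\zeta|^p=|\eta|^q\}$ via $q/p=q-1$, loss of $C^2$ only on $\Upsilon_0$, and the pointwise bounds using $|\zeta|\leq|\eta|^{q-1}$ in the region $|\zeta|^p\leq|\eta|^q$ together with $2|\partial_\zeta Q|=|\nabla_\zeta Q|$). No gaps.
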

Next result is one of the key estimates of the paper and will be proved in Section~\ref{s: convexity}. Recall that $p_\e,q_\e$ and $\cR_\phi$ were defined in \eqref{eq: 15} and \eqref{eq: def of R_phi}, respectively.
In particular, by \eqref{d: 2},
$$
\cR_{\phi}(Q)[\xi;\omega]
=\Sk{\cH(Q)(\xi)\omega}{\cU_{\phi}\omega}_{\R^4}\,.
$$
\begin{theorem}
\label{t: convex}
For every $\phi\in[-\phi_{p_\epsilon},\phi_{p_\epsilon}]$,
\begin{equation*}
\cR_{\phi}(Q)[\xi;\omega]\,
\geqslant\,
2\delta \cos\phi
\,|\omega_1||\omega_2|\,,
\end{equation*}
for all $\xi\in(\R^2\times\R^2)\setminus\Upsilon_0$ and
$\omega=(\omega_1,\omega_2)\in\R^2\times\R^2$.
\end{theorem}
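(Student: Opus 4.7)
The plan is to verify the pointwise inequality on each of the open regions $U_+=\{|\zeta|^p>|\eta|^q\}$ and $U_-=\{|\zeta|^p<|\eta|^q\}$, on each of which $Q$ is $C^2$ by Proposition~\ref{p: 3} (their complement being $\Upsilon_0$, which is excluded from the statement). Using the symmetry of $\cH(Q)$ one rewrites the form as $\cR_\phi(Q)[\xi;\omega]=\sk{\cH(Q)(\xi)\omega}{\cU_\phi\omega}_{\R^4}$. The case $\phi=0$ (where $\cU_\phi$ is the identity and $\cR_\phi(Q)$ reduces to the ordinary Hessian form) has been carried out in \cite{NT,DV-Kato,CD}; the present task is to quantify how much slack that earlier estimate carries, so that twisting by $\cU_\phi$ still yields a positive lower bound for the entire range $|\phi|\leq\phi_{p_\epsilon}$.

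In $U_+$ the function decouples as $(1+2\delta/p)|\zeta|^p+(1+\delta(2-q)/q)|\eta|^q$, so $\cH(Q)$ is block-diagonal and the estimate splits into separate analyses of $\cR_\phi(|\zeta|^p)$ and $\cR_{-\phi}(|\eta|^q)$. A direct polar computation shows that for a radial $f(z)=|z|^\alpha$ on $\R^2$, the form $\cR_\phi(f)$ at $z\neq 0$ has, in the orthonormal frame $(z/|z|,z^\perp/|z|)$, determinant proportional to $(\alpha-1)\cos^2\phi-\tfrac{(\alpha-2)^2}{4}\sin^2\phi$, which vanishes precisely at the critical angle $\phi_{\alpha/(\alpha-1)}$. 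Since $|\phi|\leq\phi_{p_\epsilon}$ is strictly below both $\phi_p$ and $\phi_q$, a quantitative version yields radial bounds $\cR_\phi(|\zeta|^p)[\zeta;\omega_1]\geqsim\epsilon p\cos\phi\,|\zeta|^{p-2}|\omega_1|^2$ and analogously for the $\eta$-block. AM-GM then produces a cross bound $\geqsim\epsilon\cos\phi\cdot|\zeta|^{(p-2)/2}|\eta|^{(q-2)/2}|\omega_1||\omega_2|$; the elementary identity $\tfrac{p-2}{2}+\tfrac{p(q-2)}{2q}=0$ (equivalent to $\tfrac1p+\tfrac1q=1$) implies $|\zeta|^{(p-2)/2}|\eta|^{(q-2)/2}\geq 1$ throughout $U_+$, so, since $\delta\leqsim\epsilon$, the desired bound follows with room to spare.

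In the substantive region $U_-$, the perturbation $\delta|\zeta|^2|\eta|^{2-q}$ introduces off-diagonal Hessian blocks $2\delta(2-q)|\eta|^{-q}\zeta\eta^T$ and contributes to both diagonal blocks. The off-diagonal blocks produce a cross term of order $\delta|\zeta||\eta|^{1-q}|\omega_1||\omega_2|$ in $\cR_\phi(Q)$ that may take either sign. The key new positive ingredient is the addition $2\delta|\eta|^{2-q}I$ in the $\zeta\zeta$-block: pairing $2\delta|\eta|^{2-q}|\omega_1|^2$ with a fraction of the $\eta$-radial bound $q\epsilon\cos\phi\cdot|\eta|^{q-2}|\omega_2|^2$ via AM-GM yields $\geqsim\sqrt{\delta\epsilon\cos\phi}|\omega_1||\omega_2|$ (the powers of $|\eta|$ cancel exactly). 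Combined with the radial bounds for the main part $|\zeta|^p+|\eta|^q$ and the constraint $|\zeta|^p\leq|\eta|^q$, one can dominate both the troublesome off-diagonal cross term and the non-sign-definite $\eta\eta$-contribution of the perturbation, obtaining the required estimate.

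The main obstacle is quantitative bookkeeping: the numerical choice $\delta=2q(q-1)\epsilon/85$ is calibrated so that, after every AM-GM and in the worst-case alignment of $\omega_1,\omega_2$ relative to $\zeta,\eta$, the net lower bound is exactly $2\delta\cos\phi|\omega_1||\omega_2|$. The gap between $\phi_{p_\epsilon}$ and $\phi_p$ is what makes the argument feasible at all; quantitatively, the surplus $\sqrt{p-1}-\sqrt{(1-\epsilon)(p-1-\epsilon)}\geqsim\epsilon p/\sqrt{p-1}$ generates the $O(\epsilon)$ slack in the radial bounds that becomes the $\epsilon\cos\phi$ factor, large enough to absorb the $\delta$-perturbation (also $O(\epsilon)$) while leaving a positive residue of the correct size.
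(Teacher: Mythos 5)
Your proposal follows essentially the same route as the paper's proof: the same splitting into the regions $|\zeta|^p\gtrless|\eta|^q$, Bakry's angle computation for the power functions $|v|^r$ with the $\epsilon$-slack coming from $\phi_{p_\epsilon}<\phi_p$ (which the paper encodes in the identities $\cD_{p,\phi}=\epsilon I_2+(1-\epsilon)\cD_{p_\epsilon,\phi}$ and $\cD_{q,\phi}=\epsilon(q-1)I_2+[1-\epsilon(q-1)]\cD_{q_\epsilon,\phi}$), the gain $2\delta|\eta|^{2-q}I_2$ in the $\zeta\zeta$-block paired by AM--GM against the $\eta$-slack, the $O(\delta)$ off-diagonal bound, and the absorption of the indefinite $\eta\eta$-part of $\delta|\zeta|^2|\eta|^{2-q}$ using $|\zeta|^2<|\eta|^{2q-2}$. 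The one step you assert rather than carry out is the tight bookkeeping yielding the constant exactly $2\delta\cos\phi$: the margin is zero by design, and the paper's Lemma~\ref{l: 4} needs the identity $(2-q)\cD_{2-q,\phi}=-2(q-1)I_2+q\cD_{q,\phi}$ together with a case split (so that only $2\delta(q-1)\leq 2\delta$ of the $\eta$-slack is lost, giving the factor $81/2$) and the off-diagonal estimate $(2-q)(1+\tan\phi_p)<2$; a cruder operator-norm bound on the $\cD_{2-q,-\phi}$ term at this point would land marginally below the stated constant $2$, though it would still give the inequality up to a harmless absolute factor.
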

\begin{remark}
The special case $\phi=0$ follows from earlier papers, see \cite[Section 8]{NT} and \cite[Theorem 3]{DV-Sch}. Actually, the estimate for $\phi=0$ was essentially the very purpose of constructing the prototype of $Q$ in \cite{NT}.
\end{remark}

\subsection*{Regularization of $Q$}
Denote by $*$ convolution in $\R^{4}$, and let $(\psi_r)_{r>0}$ be a nonnegative, smooth and compactly supported approximation to the identity in $\R^4$. Since $Q\in C^1(\R^4)$ and its second-order partial derivatives exist on $\R^4\setminus \Upsilon_0$ and are locally integrable in $\R^4$,
\begin{equation}\label{eq: 8}
\cR_\phi(Q*\psi_r)[\xi;\omega]=\int_{\R^4}\cR_\phi(Q)[\xi-\xi';\omega]\psi_r(\xi')\wrt \xi'
\end{equation}
for every $r>0$, $\xi\in\R^4$ and every $\omega=(\omega_1,\omega_2)\in\R^2\times\R^2$.

\begin{corollary}
\label{t: 4}
For every $r>0$ and $\phi\in [-\phi_{p_\epsilon},\phi_{p_\epsilon}]$,
\begin{equation}\label{eq: sim}
\cR_{\phi}(Q*\psi_r)[\xi;\omega]\,
\geqslant\,
2\delta \cos\phi |\omega_1||\omega_2|\,,
\end{equation}
for all $\xi\in\R^4$ and every $\omega=(\omega_1,\omega_2)\in\R^2\times\R^2$.
\end{corollary}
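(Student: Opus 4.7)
The plan is to simply integrate the pointwise lower bound supplied by Theorem~\ref{t: convex} against the mollifier, using the representation~\eqref{eq: 8} that the authors have already established. Concretely, fix $r>0$, $\phi\in[-\phi_{p_\epsilon},\phi_{p_\epsilon}]$, $\xi\in\R^4$ and $\omega=(\omega_1,\omega_2)\in\R^2\times\R^2$. Substituting $\xi\mapsto\xi-\xi'$ into Theorem~\ref{t: convex}, one has
$$
\cR_\phi(Q)[\xi-\xi';\omega]\;\geq\;2\delta\cos\phi\,|\omega_1||\omega_2|
$$
whenever $\xi-\xi'\notin\Upsilon_0$.

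The key observation is that $\Upsilon_0$ is a Lebesgue-null subset of $\R^4$: it is the union of the $2$-dimensional plane $\{\eta=0\}$ and the real-analytic hypersurface $\{|\zeta|^p=|\eta|^q\}$, both of which have four-dimensional Lebesgue measure zero. Consequently, for each fixed $\xi$, the translate $\xi-\Upsilon_0$ is also null, so the pointwise bound above holds for a.e. $\xi'\in\R^4$.

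Since $\psi_r\geq 0$ and $\int_{\R^4}\psi_r(\xi')\,\wrt\xi'=1$, plugging the inequality into \eqref{eq: 8} yields
$$
\cR_\phi(Q*\psi_r)[\xi;\omega]
\;=\;\int_{\R^4}\cR_\phi(Q)[\xi-\xi';\omega]\,\psi_r(\xi')\,\wrt\xi'
\;\geq\;2\delta\cos\phi\,|\omega_1||\omega_2|\int_{\R^4}\psi_r(\xi')\,\wrt\xi'
\;=\;2\delta\cos\phi\,|\omega_1||\omega_2|,
$$
which is exactly \eqref{eq: sim}. There is no genuine obstacle here; the only point requiring care is the justification of the pointwise representation \eqref{eq: 8} for the regularized Hessian, which is already granted by the authors (it follows from the fact that $Q\in C^1(\R^4)$ with locally integrable distributional second derivatives, so that $\cH(Q*\psi_r)=\cH(Q)*\psi_r$ in the classical sense), together with the elementary measure-theoretic remark that $\Upsilon_0$ is negligible.
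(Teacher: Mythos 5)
Your argument is correct and is essentially the paper's own proof: the authors likewise just combine the convolution identity \eqref{eq: 8} with the pointwise bound of Theorem~\ref{t: convex}, using nonnegativity and unit mass of $\psi_r$. Your added remark that $\Upsilon_0$ (and hence each translate) is Lebesgue-null is a correct and useful explicit justification of a point the paper leaves implicit.
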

\begin{proof}
The corollary immediately follows from \eqref{eq: 8} and Theorem~\ref{t: convex}.
\end{proof}

For use later in Section~\ref{s: proof bilinear} we state the main corollary of Theorem~\ref{t: convex}. If $\alpha,\beta\in \C$ and $\phi\in(-\pi/2,\pi/2)$, define $F_{\alpha,\beta}:\C^2\rightarrow \R$ by the rule
\begin{align}
\label{eq: final stima}
F_{\alpha,\beta}(\xi)=2\Re\left[e^{i\phi}\alpha\cdot(\partial_\zeta Q)(\xi)+e^{-i\phi}\beta\cdot(\partial_\eta Q)(\xi)\right]\,.
\end{align}
\begin{corollary}
\label{l: sycn final stima}
Let $\alpha,\beta\in\C$. Then, for every $\phi\in[-\phi_{p_\epsilon},\phi_{p_\epsilon}]$, and all $\zeta,\eta\in\C$,
\begin{align*}
F_{\alpha,\beta}(\alpha+\zeta,\beta+\eta)-F_{\alpha,\beta}(\zeta,\eta)\geqslant 2\delta\cos\phi |\alpha||\beta|\,.
\end{align*}
\end{corollary}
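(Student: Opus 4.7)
The plan is to identify $F_{\alpha,\beta}$ as (essentially) a directional derivative of $Q$ against the rotated vector $\mathcal{U}_\phi\omega$, so that the finite difference $F_{\alpha,\beta}(\xi+\omega)-F_{\alpha,\beta}(\xi)$ becomes a line integral of $\cR_\phi(Q)$ along the segment $[\xi,\xi+\omega]$, to which Corollary~\ref{t: 4} applies directly.

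Write $\omega=(\alpha,\beta)\in\R^2\times\R^2$, identifying $\C$ with $\R^2$. A short computation from the definitions of $\partial_\zeta=\frac12(\partial_{\zeta_1}-i\partial_{\zeta_2})$ and $\mathcal{O}_\phi$ shows that for every real-valued $G\in C^1(\R^4)$ and every $\xi\in\R^4$,
$$2\Re\bigl[e^{i\phi}\alpha\cdot\partial_\zeta G(\xi)+e^{-i\phi}\beta\cdot\partial_\eta G(\xi)\bigr]
=\langle \mathcal{U}_\phi\omega,\nabla G(\xi)\rangle_{\R^4}.$$
In particular, $F_{\alpha,\beta}(\xi)=\langle\mathcal{U}_\phi\omega,\nabla Q(\xi)\rangle_{\R^4}$.

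Because $Q$ is only $C^1$, I would first run the argument with $Q$ replaced by its mollification $Q_r:=Q*\psi_r$, which is smooth; denote by $F^r_{\alpha,\beta}$ the corresponding functional built from $Q_r$. The fundamental theorem of calculus applied to $t\mapsto\nabla Q_r(\xi+t\omega)$ gives
$$F^r_{\alpha,\beta}(\xi+\omega)-F^r_{\alpha,\beta}(\xi)
=\int_0^1\langle\mathcal{U}_\phi\omega,\mathcal{H}(Q_r)(\xi+t\omega)\omega\rangle_{\R^4}\,dt
=\int_0^1\cR_\phi(Q_r)[\xi+t\omega;\omega]\,dt,$$
where the second equality uses symmetry of the Hessian together with the definition of $\cR_\phi$ in \eqref{eq: def of R_phi}. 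Corollary~\ref{t: 4} bounds the integrand from below by $2\delta\cos\phi\,|\alpha||\beta|$ uniformly in $t$, yielding the desired inequality for $Q_r$ at the point $\xi=(\zeta,\eta)$.

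Finally, I would let $r\to 0^+$. Since $Q\in C^1(\R^4)$, we have $\nabla Q_r=(\nabla Q)*\psi_r$, which converges pointwise to $\nabla Q$ at every point (continuity of $\nabla Q$ suffices for convergence of convolution with an approximate identity). Consequently $F^r_{\alpha,\beta}\to F_{\alpha,\beta}$ pointwise, and the inequality survives the limit. The only non-routine observation is the identification of $F_{\alpha,\beta}$ as a directional derivative in the direction $\mathcal{U}_\phi\omega$; once this is in hand, the proof reduces to the pointwise bound already packaged in Corollary~\ref{t: 4}, with the $C^1$-versus-$C^2$ mismatch handled by the standard mollification already in use.
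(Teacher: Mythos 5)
Your argument is correct and is essentially the paper's own proof: the same identification $F_{\alpha,\beta}(\xi)=\langle\cU_\phi\cV(\alpha,\beta),\nabla Q(\xi)\rangle_{\R^4}$, the same mollification to handle the $C^1$-versus-$C^2$ issue, and the same reduction to Corollary~\ref{t: 4}. The only cosmetic difference is that you integrate the directional derivative (fundamental theorem of calculus) where the paper invokes the mean value theorem, which changes nothing of substance.
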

\begin{proof}
For every $r>0$, let $F_{r,\alpha,\beta}$ be the function defined by replacing $Q$ with $Q*\psi_r$ in \eqref{eq: final stima}. Since $Q$ is $C^1$, one has $\lim_{r\rightarrow 0^+}F_{r,\alpha,\beta}(\xi)=F_{\alpha,\beta}(\xi)$ for all $\xi\in\C^2$. Therefore, it suffices to prove the lemma with $F_{\alpha,\beta}$ replaced by $F_{r,\alpha,\beta}$.

If $u=(u_1,u_2)$ is a vector in $\C^2$, let us denote by $\cV(u)$ its real counterpart in $\R^{4}$, that is,
$$
\cV(u)=(\Re u_1,\Im u_1,\Re u_2, \Im u_2)\,.
$$
Whit this notation, we have
$$
F_{r,\alpha,\beta}(\xi)
=\Sk{\cU_\phi\cdot\cV(\alpha,\beta)}{\nabla (Q*\psi_r)(\xi)}_{\R^4}\,.
$$
Therefore, for all $\xi\in\C^2$,
$$
\Sk{\cV(\alpha,\beta)}{\nabla F_{r,\alpha,\beta}(\xi)}_{\R^4}= \cR_{\phi}(Q*\psi_r)[\xi;\cV(\alpha,\beta)].
$$
The result now follows from the mean value theorem and Corollary~\ref{t: 4}.
\end{proof}

\section{Proof of Theorem~\ref{t: convex}}
\label{s: convexity}
We shall throughout this section keep in mind that $p>2$ and $\epsilon \in (0,1/2]$ are fixed numbers, that $q=p/(p-1)$ is the conjugate exponent of $p$ and that $p_\e,q_\e$ were defined in \eqref{eq: 15}. Furthermore, recall that with $p,\e$ as above we introduced $\delta=\delta(p,\e)$ in \eqref{eq: def delta} and that the function $Q=Q_{p,\e}$ was defined in \eqref{eq: def bellman} by means of these $p,\delta$. Finally, bear in mind that $\cH(Q)$ denotes the Hessian matrix of $Q$, and that, for $\phi\in (-\pi/2,\pi/2)$, the matrix function $\cR_\phi(Q)$ was defined in \eqref{eq: def of R_phi}. We can write
\begin{equation*}
\label{eq: split R}
\cR_{\phi}(Q) =\cos\phi\cdot \cH(Q)+\sin\phi\cdot \cI(Q)\,
\end{equation*}
with the matrices $\cH(Q)$ and $\cI(Q)$ expressed in the block matrix notation as
\begin{equation*}
\label{eq: block}
\cH(Q)=\left[ {\begin{array}{lc}
\cH_1(Q)&\cH_2(Q)\\
\cH_2^T(Q)&\cH_3(Q)
\end{array} } \right]\, \ \ \ {\rm and} \ \ \  \cI(Q)=
\left[ {\begin{array}{lr}
\cI_1(Q)& \cI_2(Q)\\
\cI_2^T(Q)&-\cI_3(Q)
\end{array} } \right],
\end{equation*}
where, in turn, $\cI_j(Q)$ are $2\times 2$ real matrices given by
\begin{equation*}
\cI_1(Q)=
\left[
{\begin{array}{cc}
\partial^2_{\zeta_1\zeta_2}Q & \frac12(\partial^2_{\zeta_2\zeta_2}Q-\partial^2_{\zeta_1\zeta_1}Q)\\
\frac12(\partial^2_{\zeta_2\zeta_2}Q-\partial^2_{\zeta_1\zeta_1}Q) &-\partial^2_{\zeta_1\zeta_2}Q
\end{array} }
\right]\,,
\end{equation*}
\begin{equation*}
\cI_3(Q)=
\left[
{\begin{array}{cc}
\partial^2_{\eta_1\eta_2}Q & \frac12(\partial^2_{\eta_2\eta_2}Q-\partial^2_{\eta_1\eta_1}Q)\\
\frac12(\partial^2_{\eta_2\eta_2}Q-\partial^2_{\eta_1\eta_1}Q) &-\partial^2_{\eta_1\eta_2}Q
\end{array} }
\right]\,
\end{equation*}
and
\begin{equation*}
\cI_2(Q)=
\frac12\left[ {\begin{array}{rr}
\partial^2_{\zeta_2\eta_1}Q-\partial^2_{\zeta_1\eta_2}Q & \partial^2_{\zeta_1\eta_1}Q+\partial^2_{\zeta_2\eta_2}Q\\
-\partial^2_{\zeta_1\eta_1}Q-\partial^2_{\zeta_2\eta_2}Q& \partial^2_{\zeta_2\eta_1}Q-\partial^2_{\zeta_1\eta_2}Q
\end{array} } \right]\,.
\end{equation*}
\indent
Proving Theorem~\ref{t: convex} requires several steps. Given $\xi\in(\R^2\times\R^2)\setminus\Upsilon_0$ and $\omega=(\omega_1,\omega_2)\in\R^2\times\R^2$, we have
\begin{equation}
\label{eq: split in blocks}
\aligned
(\cos\phi)^{-1}
\cR_\phi(Q)\left[\xi;\omega\right]
& = \hskip 9pt \Sk{\big(\cH_1(Q)+\tan\phi\cdot \cI_1(Q)\big)(\xi)\omega_1}{\omega_1}_{\R^2} \\
&  \hskip 4pt + 2\Sk{\big(\cH_2(Q)+\tan\phi\cdot\cI_2(Q)\big)(\xi)\omega_1}{\omega_2}_{\R^2} \\
&  \hskip 6pt + \hskip 6pt\Sk{\big(\cH_3(Q)-\tan\phi\cdot\cI_3(Q)\big)(\xi)\omega_2}{\omega_2}_{\R^2} \,.
\endaligned
\end{equation}
We shall split the region $(\R^2\times\R^2)\backslash\Upsilon_0$ into a ``good" and a ``bad" part as
$$
\Omega_g=\Mn{(\zeta,\eta)\in\R^2\times\R^2}{|\zeta|^p>|\eta|^q>0} \hskip 11pt \text{and}\hskip 11pt  \Omega_b=\Mn{(\zeta,\eta)\in\R^2\times\R^2}{|\zeta|^p<|\eta|^q}.
$$
In both regions above we estimate the three terms on the right-hand side of \eqref{eq: split in blocks} one by one. The obtained results will be summarized in Propositions~\ref{p: 1} and \ref{p: 2}, respectively, and Theorem~\ref{t: convex} will then follow by combining them.

\subsection*{Power functions} Define, for $v\in\R^2$ and $r>1$,
$$
F_r(v)=|v|^r\,.
$$
The Nazarov--Treil function $Q$ essentially comprises three tensor products of power functions, namely
$F_p\otimes{\mathbf 1}$, $ {\mathbf 1}\otimes F_q$ and $F_2\otimes F_{2-q}$.
Our goal is  to study the quadratic form on $\R^4$ generated by the matrix $\cH(Q)\cU_\phi$,
with $\cU_\phi$ as in \eqref{planinic}.
We have, for example, $\cH(F_p\otimes{\mathbf 1})\cU_\phi=\big[\cH(F_p)\cO_\phi\big]\oplus{\mathbf 0}$.
Hence it is useful to understand first the properties of the quadratic forms on $\R^2$ induced by the matrices $\cH(F_r)\cO_\phi$, $r>1$. They have been earlier studied by Bakry \cite{Bakry3}. Define
$$
S_{r,\phi}(v)=
\frac12\,\Big(\cH(F_r)(v)\cO_\phi+\cO_{-\phi}\cH(F_r)(v)
\Big)\,.
$$
A calculation shows that
$$
S_{r,\phi}(v)=r|v|^{r-2}\cos\phi\cdot\cD_{r,\phi}(v)\,,
$$
where
\begin{equation*}
%\label{eq: powers}
\cD_{r,\phi}(v)=\frac r2\Big(I_2+\frac{1-2/r}{\cos\phi}\,\cK(2\f-\phi)\Big)
\end{equation*}
with
$$
\cK(\alpha)
=\left[ {\begin{array}{rr}
  1&0\\
  0&-1
 \end{array} } \right]
 \cO_{-\alpha}
=\left[ {\begin{array}{rr}
  \cos\alpha&\sin\alpha\\
  \sin\alpha&-\cos\alpha
 \end{array} } \right]\,,
$$
and $\f$ being the polar angle of $v$.

\begin{lemma}[\cite{Bakry3}]
\label{l: bakry}
Let $\phi\in(-\pi/2,\pi/2)$ and $r>1$. For every $v\in\R^2\backslash\{0\}$,
the matrix $\cD_{r,\phi}(v)$ induces a positive semidefinite quadratic form on $\R^2$ if and only if $|\phi|\leqslant \phi_r$.
\end{lemma}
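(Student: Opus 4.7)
The plan is to reduce the positive semidefiniteness of $\cD_{r,\phi}(v)$ to an elementary scalar inequality on the eigenvalues of a reflection-type matrix. The structure of $\cD_{r,\phi}(v)$ is a sum of the identity plus a scalar multiple of $\cK(2\f-\phi)$, so everything will hinge on what the eigenvalues of $\cK(\alpha)$ look like.

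First, I would observe directly from the explicit formula for $\cK(\alpha)$ that it has trace equal to $\cos\alpha-\cos\alpha=0$ and determinant equal to $-\cos^2\alpha-\sin^2\alpha=-1$. Hence, independently of $\alpha\in\R$, the matrix $\cK(\alpha)$ is symmetric with eigenvalues exactly $+1$ and $-1$. In particular, for any real scalar $c$, the matrix $I_2+c\cK(\alpha)$ is symmetric with eigenvalues $1+c$ and $1-c$.

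Applying this with $c=(1-2/r)/\cos\phi$ (which is legitimate since $\phi\in(-\pi/2,\pi/2)$ makes $\cos\phi>0$) and $\alpha=2\f-\phi$, I conclude that $\cD_{r,\phi}(v)=(r/2)\bigl(I_2+c\cK(2\f-\phi)\bigr)$ is symmetric with eigenvalues
\begin{equation*}
\frac{r}{2}\left(1\pm\frac{1-2/r}{\cos\phi}\right).
\end{equation*}
Because $r>1$ gives $r/2>0$, the quadratic form induced by $\cD_{r,\phi}(v)$ is positive semidefinite on $\R^2$ if and only if the smaller of these two eigenvalues is nonnegative, i.e.
\begin{equation*}
1-\frac{|1-2/r|}{\cos\phi}\geq 0
\qquad\Longleftrightarrow\qquad
\cos\phi\geq|1-2/r|.
\end{equation*}

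Finally, since $\cos\phi_r=|1-2/r|$ by the very definition of $\phi_r$, and $\cos$ is even and strictly decreasing on $[0,\pi/2)$, the last inequality is equivalent to $|\phi|\leq\phi_r$, as claimed. Note that the criterion does not depend on $v$, since the angle $2\f-\phi$ only affects the \emph{eigenvectors} of $\cK$, not its eigenvalues. Since each step is purely algebraic and the key input (the eigenvalues of $\cK(\alpha)$) is essentially a one-line computation, there is no serious obstacle here; the main point is simply to recognize the reflection structure of $\cK(\alpha)$ that makes the spectrum $\phi$-independent.
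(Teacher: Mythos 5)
Your proof is correct and is essentially the paper's argument: the paper checks ${\rm tr}\,\cD_{r,\phi}(v)=r>0$ and $\det\cD_{r,\phi}(v)=(r/2)^2\bigl[1-(\cos\phi_r/\cos\phi)^2\bigr]\geq 0$, which is exactly the product of the two eigenvalues $\tfrac r2\bigl(1\pm\tfrac{1-2/r}{\cos\phi}\bigr)$ you obtain from the reflection structure of $\cK(2\f-\phi)$. Your eigenvalue formulation is just a slightly more explicit packaging of the same $2\times2$ trace--determinant criterion, including the observation that the condition is independent of $v$.
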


\begin{proof}
Note that ${\rm tr}\,\cD_{r,\phi}(v)=r>0$ and ${\displaystyle \det\cD_{r,\phi}(v)=(r/2)^2[1-(\cos\phi_r/\cos\phi)^2]}$, which is nonnegative if and only if $|\phi|\leqslant \phi_r$.
\end{proof}

In the continuation we use Lemma~\ref{l: bakry} for estimating the terms in \eqref{eq: split in blocks}. We also need the following identities which can be easily deduced from the very definition of $\cD_{r,\phi}$.
\begin{align}
&\cD_{p,\phi}(v)=\epsilon I_2+(1-\epsilon)\cD_{p_\epsilon,\phi}(v)\,,\label{eq: D1}\\
&\cD_{q,\phi}(v)=\epsilon(q-1)I_2+[1-\epsilon(q-1)]\cD_{q_\epsilon,\phi}(v)\,,\label{eq: D2}\\
&(2-q)\cD_{2-q,\phi}(v)=-2(q-1)I_2+q\cD_{q,\phi}(v)\,.\label{eq: D3}
\end{align}

\begin{proposition}\label{p: 1}
For all $\xi\in\Omega_g$, $\omega=(\omega_1,\omega_2)\in\R^2\times\R^2$ and $\phi\in[-\phi_{p_\e},\phi_{p_\e}]$ we have
$$
\cR_{\phi}(Q)[\xi;\omega]
\geqslant
2\delta\cos\phi
|\omega_1||\omega_2|
\,.
$$
\end{proposition}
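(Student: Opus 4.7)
The driving observation is that on the open region $\Omega_g$ the Bellman function $Q$ collapses to a decoupled form: from the second branch of \eqref{eq: def bellman},
\[
Q(\zeta,\eta) \,=\, A|\zeta|^p + B|\eta|^q, \qquad A := 1 + \tfrac{2\delta}{p}, \quad B := 1 + \tfrac{(2-q)\delta}{q},
\]
with $A, B \geq 1$. Thus the off-diagonal blocks $\cH_2(Q)$, $\cI_2(Q)$ vanish identically on $\Omega_g$, the middle summand of \eqref{eq: split in blocks} drops out, and the problem reduces to handling the two diagonal blocks separately in $\omega_1$ and $\omega_2$ and then combining them via AM--GM.

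Next I plan to recognize the diagonal blocks as the matrices $S_{r,\phi}$ studied in Lemma~\ref{l: bakry}. Direct inspection of the definitions of $\cH_1,\cH_3,\cI_1,\cI_3$ and of $S_{r,\phi}$ shows, for the separated $Q$ above,
\[
\cos\phi\cdot \cH_1(Q) + \sin\phi\cdot \cI_1(Q) = A\,S_{p,\phi}(\zeta), \qquad \cos\phi\cdot\cH_3(Q) - \sin\phi\cdot \cI_3(Q) = B\,S_{q,-\phi}(\eta)
\]
(the sign reversal in the second identity is absorbed as $-\phi$ in the argument of $S$). Using $S_{r,\phi}(v) = r|v|^{r-2}\cos\phi\cdot\cD_{r,\phi}(v)$ together with the splittings \eqref{eq: D1}, \eqref{eq: D2} and Lemma~\ref{l: bakry} applied to $\cD_{p_\epsilon,\phi}$ and $\cD_{q_\epsilon,-\phi}$ (both positive semidefinite when $|\phi|\leq \phi_{p_\epsilon} = \phi_{q_\epsilon}$), I then read off the pointwise bounds $\cD_{p,\phi}(\zeta) \geq \epsilon\, I_2$ and $\cD_{q,-\phi}(\eta) \geq \epsilon(q-1)\, I_2$. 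Substituting into \eqref{eq: split in blocks} yields
\[
\cR_\phi(Q)[\xi;\omega] \;\geq\; \cos\phi\cdot\bigl(Ap\epsilon|\zeta|^{p-2}|\omega_1|^2 + Bq(q-1)\epsilon|\eta|^{q-2}|\omega_2|^2\bigr).
\]

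The closing step is plain AM--GM. The coefficient product simplifies to $AB\,q^2 \epsilon^2 |\zeta|^{p-2}|\eta|^{q-2}$ because $p(q-1) = q$. The decisive algebraic identity is that the exponent $q(p-2)/p + (q-2)$ equals $2q(p-1)/p - 2 = 0$ (which is precisely the duality of $p$ and $q$); so on $\Omega_g$, where $|\zeta|^p \geq |\eta|^q$ and $p-2>0$, one gets $|\zeta|^{p-2}|\eta|^{q-2} \geq 1$. AM--GM then delivers
\[
\cR_\phi(Q)[\xi;\omega] \;\geq\; 2\epsilon q\sqrt{AB}\cos\phi\cdot|\omega_1||\omega_2| \;\geq\; 2\epsilon q\cos\phi\,|\omega_1||\omega_2| \;\geq\; 2\delta\cos\phi\,|\omega_1||\omega_2|,
\]
where the last inequality uses $\delta = 2q(q-1)\epsilon/85$ and the trivial bound $q - 1 \leq 1 \leq 85/2$. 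The only nonroutine ingredient is the vanishing-exponent identity that renders the AM--GM factor homogeneous of degree zero on $\Omega_g$. I do not anticipate any serious difficulty beyond careful bookkeeping of constants; by contrast, the real work will be needed for the companion estimate on the ``bad'' region $\Omega_b$, where $Q$ genuinely couples $\zeta$ and $\eta$.
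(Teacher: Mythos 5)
Your proposal is correct and follows essentially the same route as the paper: on $\Omega_g$ the function decouples so the off-diagonal blocks vanish, the diagonal blocks are reduced via \eqref{eq: D1}--\eqref{eq: D2} and Lemma~\ref{l: bakry} to the lower bounds $\epsilon\,I_2$ and $\epsilon(q-1)I_2$, and the conclusion follows from $|\zeta|^{p-2}|\eta|^{q-2}\geq 1$ on $\Omega_g$ together with AM--GM and the choice of $\delta$. Your bookkeeping of the constants $A,B\geq 1$ and the explicit zero-exponent computation are just a slightly more detailed rendering of the paper's one-line use of $|\eta|^{q-2}>|\zeta|^{2-p}$.
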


\begin{proof}
First note that in the region $\Omega_g$ we have $\cH_2(Q)=\cI_2(Q)=0$. As for the other blocks on the right-hand side of \eqref{eq: split in blocks},
$$
\aligned
\left(\cH_1(Q)+\tan\phi\cdot \cI_1(Q)\right)(\zeta,\eta)
&=(p+2\delta)|\zeta|^{p-2}\cD_{p,\phi}(\zeta)\\
\left(\cH_3(Q)-\tan\phi\cdot\cI_3(Q)\right)(\zeta,\eta)
&=(q+(2-q)\delta)|\eta|^{q-2}\cD_{q,-\phi}(\eta)\,.
\endaligned
$$
By Lemma~\ref{l: bakry}, the quadratic forms $\cD_{p_\e,\phi}(\zeta)$ and $\cD_{q_\e,-\phi}(\eta)$ are positive semidefinite, because  $|\phi|\leq \phi_{p_\e}=\phi_{q_\e}$ by assumption. Therefore, by \eqref{eq: D1} and \eqref{eq: D2},
\begin{align*}
\left(\cH_1(Q)+\tan\phi\cdot \cI_1(Q)\right)(\zeta,\eta)
& \geqslant \epsilon p|\zeta|^{p-2}I_2\\
 \left(\cH_3(Q)-\tan\phi\cdot\cI_3(Q)\right)(\zeta,\eta)
& \geqslant \epsilon(q-1) q|\eta|^{q-2}I_2
\,.
\end{align*}
It follows from \eqref{eq: split in blocks} that
$$
\cR_{\phi}(Q)[\xi;\omega]
\geqslant
\cos\phi\big(\epsilon p|\zeta|^{p-2}|\omega_1|^2+\epsilon q(q-1)|\eta|^{q-2}|\omega_2|^2\big)\,.
$$
Since in $\Omega_g$ one has that $|\eta|^{q-2}> |\zeta|^{2-p}$, we can at this point quickly finish
the proof.
\end{proof}
We now estimate the terms on the right-hand side of \eqref{eq: split in blocks} in the bad region $\Omega_b$. Define
$$
G(\zeta,\eta)=|\zeta|^p+|\eta|^q
\hskip 25pt
\text{and}
\hskip 25pt
H(\zeta,\eta)=|\zeta|^2|\eta|^{2-q}\,,
$$
so that in $\Omega_b$ we have
\begin{equation}
\label{eq: 18}
Q=G+\delta H\,.
\end{equation}
\begin{lemma}\label{l: 3}
For every $\xi=(\zeta,\eta)\in\Omega_b$, $\omega_1\in\R^2$ and $\phi\in[-\phi_p, \phi_p]$ one has
$$
\Sk{\big(\cH_1(Q)+\tan\phi\cdot\cI_1(Q)\big)(\xi)\omega_1}{\omega_1}_{\R^2}\geqslant 2\delta|\eta|^{2-q}|\omega_1|^2\,.
$$
\end{lemma}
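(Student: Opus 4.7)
My plan is to exploit the fact that the $2{\times}2$ block $\cH_1(Q)+\tan\phi\cdot\cI_1(Q)$ involves only second-order derivatives in the $\zeta$-variables, so in the bad-region formula \eqref{eq: 18} the $|\eta|^q$ piece of $G$ contributes nothing. Writing $Q=|\zeta|^p+|\eta|^q+\delta|\zeta|^2|\eta|^{2-q}$ for $(\zeta,\eta)\in\Omega_b$, and treating $|\eta|^{2-q}$ as a positive constant with respect to $\zeta$-differentiation, I would decompose
$$
\cH_1(Q)+\tan\phi\cdot\cI_1(Q)
=\bigl[\cH_1(F_p)+\tan\phi\cdot\cI_1(F_p)\bigr](\zeta)
+\delta|\eta|^{2-q}\bigl[\cH_1(F_2)+\tan\phi\cdot\cI_1(F_2)\bigr](\zeta).
$$

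Next I would identify each summand with the objects $S_{r,\phi}$ introduced at the beginning of Section~\ref{s: convexity}. A direct matrix computation shows that the top-left block of $\cR_\phi(F_r)$ is precisely $S_{r,\phi}(\zeta)=r|\zeta|^{r-2}\cos\phi\cdot\cD_{r,\phi}(\zeta)$; dividing by $\cos\phi$ gives
$$
\cH_1(F_r)(\zeta)+\tan\phi\cdot\cI_1(F_r)(\zeta)=r|\zeta|^{r-2}\cD_{r,\phi}(\zeta).
$$
For $r=2$ the definition of $\cD_{r,\phi}$ yields $\cD_{2,\phi}=I_2$ (the coefficient $1-2/r$ vanishes), so the second summand becomes exactly $2\delta|\eta|^{2-q}I_2$. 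Combining,
$$
\cH_1(Q)(\zeta,\eta)+\tan\phi\cdot\cI_1(Q)(\zeta,\eta)
=p|\zeta|^{p-2}\cD_{p,\phi}(\zeta)+2\delta|\eta|^{2-q}I_2.
$$

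At this point I would invoke Lemma~\ref{l: bakry} with $r=p$: the hypothesis $|\phi|\leqslant \phi_p$ ensures that $\cD_{p,\phi}(\zeta)$ is positive semidefinite, so the first summand contributes a nonnegative quadratic form. Dropping it and keeping only the contribution $2\delta|\eta|^{2-q}I_2$ yields the desired bound
$$
\Sk{\bigl(\cH_1(Q)+\tan\phi\cdot\cI_1(Q)\bigr)(\xi)\omega_1}{\omega_1}_{\R^2}
\geqslant 2\delta|\eta|^{2-q}|\omega_1|^2.
$$
I do not foresee a real obstacle: the whole argument reduces the lemma to two ingredients that are already in hand, namely the identification of the $\cR_\phi$-block of a power function with $r|\zeta|^{r-2}\cos\phi\cdot\cD_{r,\phi}$ and Lemma~\ref{l: bakry} applied to $r=p$. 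The only point worth emphasising is that only the weaker angle constraint $|\phi|\leqslant\phi_p$ (rather than the sharper $|\phi|\leqslant\phi_{p_\epsilon}$) is needed for this block; the $\epsilon$-enlargement will be required later when estimating the $\eta$-block in $\Omega_b$, where no analogous ``free'' $I_2$ term is available to absorb a deficit in positivity.
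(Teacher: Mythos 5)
Your proposal is correct and follows essentially the same route as the paper: decompose $Q=G+\delta H$ on $\Omega_b$, identify the $\zeta$-block of the $G$-part with $p|\zeta|^{p-2}\cD_{p,\phi}(\zeta)$ and of the $H$-part with $2\delta|\eta|^{2-q}I_2$ (with vanishing $\cI_1$-contribution), then drop the first summand by Lemma~\ref{l: bakry} since $|\phi|\leq\phi_p$. Your closing remark that only $\phi_p$ (not $\phi_{p_\epsilon}$) is needed for this block is also consistent with how the lemma is stated and used.
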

\begin{proof}
We have
$$
\big(\cH_1(G)+\tan\phi\cdot\cI_1(G)\big)(\zeta,\eta)=p|\zeta|^{p-2}\cD_{p,\phi}(\zeta)
$$
as well as
$$
\cH_1(H)(\zeta,\eta)=2|\eta|^{2-q}I_2
\hskip 25pt
\text{and}
\hskip 25pt
\cI_1(H)(\zeta,\eta)=0\,.
$$
The required estimate now follows from \eqref{eq: 18} and Lemma~\ref{l: bakry}.
\end{proof}

\begin{lemma}
\label{l: 2}
For every $\xi \in\Omega_b$, $\omega_1,\omega_2\in\R^2$ and  $\phi\in[-\phi_{p},\phi_{p}]$ one has
\begin{equation*}
\Sk{\big(\cH_2(Q)+\tan\phi\cdot\cI_2(Q)\big)(\xi)\omega_1}{\omega_2}_{\R^2}
\geqslant
-8\delta|\omega_1||\omega_2|\,.
\end{equation*}
\end{lemma}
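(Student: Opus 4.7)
The plan is to exploit the fact that only the cross term $\delta H=\delta|\zeta|^2|\eta|^{2-q}$ in the Nazarov--Treil function contributes to this mixed block. By \eqref{eq: 18}, $Q=G+\delta H$ with $G(\zeta,\eta)=|\zeta|^p+|\eta|^q$ separable, so $\cH_2(G)=\cI_2(G)=0$. A direct computation, $\partial^2_{\zeta_i\eta_j}H=2(2-q)|\eta|^{-q}\zeta_i\eta_j$, shows that $\cH_2(Q)=2(2-q)\delta|\eta|^{-q}\,\zeta\eta^T$ is a rank-one real $2\times 2$ matrix whose operator norm equals $2(2-q)\delta|\zeta||\eta|^{1-q}$.

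The key structural observation I would next make is the matrix identity
$$
\cI_2(Q)=-\tfrac12\bigl(J\cH_2(Q)+\cH_2(Q)J\bigr),\qquad J=\begin{pmatrix} 0 & -1 \\ 1 & 0 \end{pmatrix},
$$
which holds for any sufficiently smooth function of two planar variables and follows immediately by comparing entries with the definition of $\cI_2$ given at the beginning of Section~\ref{s: convexity}. Using the decomposition $\cO_{-\phi}=\cos\phi\cdot I-\sin\phi\cdot J$, this reshuffles into the compact formula
$$
\cos\phi\cdot\bigl(\cH_2(Q)+\tan\phi\cdot\cI_2(Q)\bigr)=\tfrac12\bigl(\cO_{-\phi}\cH_2(Q)+\cH_2(Q)\cO_{-\phi}\bigr),
$$
and since $\cO_{-\phi}$ is orthogonal, the triangle inequality together with submultiplicativity of the operator norm give the bound $\|\cH_2(Q)+\tan\phi\cdot\cI_2(Q)\|\leq \|\cH_2(Q)\|/\cos\phi$.

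Finally, I would combine these ingredients with the geometry of $\Omega_b$ and the angle constraint. In $\Omega_b$, the inequality $|\zeta|^p\leq|\eta|^q$ is equivalent to $|\zeta||\eta|^{1-q}\leq 1$, which gives $\|\cH_2(Q)\|\leq 2(2-q)\delta$. For $|\phi|\leq\phi_p$ we have $\cos\phi\geq\cos\phi_p=(p-2)/p$, and the decisive numerical coincidence is
$$
\frac{2-q}{\cos\phi_p}=\frac{(p-2)/(p-1)}{(p-2)/p}=\frac{p}{p-1}=q\leq 2.
$$
Hence $\|\cH_2(Q)+\tan\phi\cdot\cI_2(Q)\|\leq 4\delta$, and Cauchy--Schwarz yields $\Sk{(\cH_2(Q)+\tan\phi\cdot\cI_2(Q))\omega_1}{\omega_2}_{\R^2}\geq -4\delta|\omega_1||\omega_2|\geq -8\delta|\omega_1||\omega_2|$, which is the claimed lower bound (in fact with a better constant). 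The main obstacle will be spotting the matrix identity relating $\cI_2$ and $\cH_2$, which is what reduces the whole argument to a rank-one operator-norm estimate together with the clean equality $(2-q)/\cos\phi_p=q$ -- precisely the algebraic fact that singles out $\phi_p$ as the natural threshold for a uniform bound in $\phi$.
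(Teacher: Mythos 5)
Your proposal is correct, and it even gives the sharper bound $-2q\delta\,|\omega_1||\omega_2|\geq-4\delta|\omega_1||\omega_2|$. The skeleton agrees with the paper's proof: only $\delta H$ contributes to the off-diagonal block, the mixed partials $\partial^2_{\zeta_i\eta_j}H=2(2-q)\zeta_i\eta_j|\eta|^{-q}$ are computed, the region $\Omega_b$ enters through $|\zeta|\,|\eta|^{1-q}\leq 1$, and the hypothesis $|\phi|\leq\phi_p$ enters only via trigonometric quantities at $\phi_p$. Where you genuinely differ is in how the $\phi$-dependence is controlled. The paper bounds all entries of $\cH_2(H)$ and $\cI_2(H)$ by $2(2-q)$, pays the factor $1+|\tan\phi|$, and closes with $(2-q)(1+\tan\phi_p)=2-q+2/\sqrt{p-1}<2$, which yields $-8\delta$. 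You instead use the identity $\cI_2(Q)=-\tfrac12\bigl(\cO_{\pi/2}\,\cH_2(Q)+\cH_2(Q)\,\cO_{\pi/2}\bigr)$ (which does check out entrywise against the definition of $\cI_2$, for any smooth $Q$), so that $\cos\phi\,\bigl(\cH_2(Q)+\tan\phi\cdot\cI_2(Q)\bigr)$ becomes the symmetrized product $\tfrac12\bigl(\cO_{-\phi}\cH_2(Q)+\cH_2(Q)\cO_{-\phi}\bigr)$; combined with the rank-one structure $\cH_2(Q)=2(2-q)\delta|\eta|^{-q}\,\zeta\eta^T$ this gives the operator-norm bound $2(2-q)\delta/\cos\phi$, closed by the clean identity $(2-q)/\cos\phi_p=q\leq 2$. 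Your route is structurally more illuminating — it treats the off-diagonal block exactly as the diagonal blocks are treated via $S_{r,\phi}$, i.e., as a Jordan product with the rotation $\cO_{-\phi}$ — and it is quantitatively sharper; the paper's entrywise estimate is cruder but requires no structural observation, and since the final constant only feeds into the choice of $\delta$ in \eqref{eq: def delta}, the extra sharpness is not exploited downstream (it would merely allow a slightly larger $\delta$).
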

\begin{proof}
Observe that $\cH_2(Q)+\tan\phi\cdot\cI_2(Q)=\delta(\cH_2(H)+\tan\phi\cdot \cI_2(H))$. Since
$$
\big(\partial^2_{\zeta_i\eta_j}H\big)(\zeta,\eta)=2(2-q)\zeta_i\eta_j|\eta|^{-q},
$$
the absolute values of the entries of $\cH_2(H)$ and $\cI_2(H)$ are uniformly bounded in $\Omega_b$ by $2(2-q)$.
Consequently,
$$
\Sk{\big(\cH_2(Q)+\tan\phi\cdot\cI_2(Q)\big)(\xi)\omega_1}{\omega_2}_{\R^2}
\geqslant -4\delta(2-q)(1+|\tan\phi|)|\omega_1||\omega_2|\,.
$$
We can now use the assumption on $\phi$ and estimate
$$
(2-q)(1+|\tan\phi|)
\leqslant
(2-q)(1+\tan \phi_{p}) =
2-q+\frac{2}{\sqrt{p-1}} <
2\,.
$$
This finishes the proof of Lemma~\ref{l: 2}.
\end{proof}

\begin{lemma}\label{l: 4}
For every $\xi=(\zeta,\eta)\in \Omega_b$, $\omega_2\in\R^2$ and $\phi\in[-\phi_{p_\e},\phi_{p_\e}]$,
$$
\Sk{\big(\cH_3(Q)-\tan\phi\cdot\cI_3(Q)\big)(\xi)\omega_2}{\omega_2}_{\R^2}\geqslant \frac{81}{2}\delta|\eta|^{q-2}|\omega_2|^2\,.
$$
\end{lemma}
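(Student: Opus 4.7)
My plan is to compute the bottom-right block $\cH_3(Q)-\tan\phi\,\cI_3(Q)$ in the region $\Omega_b$ via the same block-reduction to the two-dimensional matrix functions $\cD_{r,-\phi}$ that was exploited in the proof of Proposition~\ref{p: 1}, and then combine the two resulting pieces using \eqref{eq: D3}. In $\Omega_b$ one has $Q=G+\delta H$, where $G$ and $H$ depend on $\eta$ only through the pure powers $F_q(\eta)=|\eta|^q$ and $F_{2-q}(\eta)=|\eta|^{2-q}$ respectively (the latter multiplied by the $\eta$-independent factor $|\zeta|^2$). The same derivation which, in the proof of Proposition~\ref{p: 1}, identifies $S_{r,-\phi}(\eta)/\cos\phi=r|\eta|^{r-2}\cD_{r,-\phi}(\eta)$ with the bottom-right block of $\cR_\phi/\cos\phi$ applied to $F_r$, combined with linearity, will give
$$
\cH_3(Q)-\tan\phi\,\cI_3(Q)=q|\eta|^{q-2}\cD_{q,-\phi}(\eta)+\delta(2-q)|\zeta|^2|\eta|^{-q}\cD_{2-q,-\phi}(\eta).
$$

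Next, I will eliminate $\cD_{2-q,-\phi}$ using \eqref{eq: D3} (with $\phi$ replaced by $-\phi$), rewriting $(2-q)\cD_{2-q,-\phi}(\eta)=-2(q-1)I_2+q\cD_{q,-\phi}(\eta)$. Substituting and regrouping gives
$$
\cH_3(Q)-\tan\phi\,\cI_3(Q)=q\bigl(|\eta|^{q-2}+\delta|\zeta|^2|\eta|^{-q}\bigr)\cD_{q,-\phi}(\eta)-2\delta(q-1)|\zeta|^2|\eta|^{-q}I_2.
$$
Since $|\phi|\leqslant\phi_{p_\epsilon}=\phi_{q_\epsilon}$, Lemma~\ref{l: bakry} yields $\cD_{q_\epsilon,-\phi}(\eta)\geqslant 0$, and then \eqref{eq: D2} together with $1-\epsilon(q-1)>0$ (which holds since $\epsilon<1/2$ and $p>2$) gives $\cD_{q,-\phi}(\eta)\geqslant \epsilon(q-1)I_2$. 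I will then invoke the pointwise bound $|\zeta|^2|\eta|^{-q}\leqslant|\eta|^{q-2}$ valid in $\Omega_b$ (where $|\zeta|^p\leqslant|\eta|^q$ and $q/p=q-1$) to control the term with the negative coefficient $q\epsilon-2<0$, concluding
$$
\cH_3(Q)-\tan\phi\,\cI_3(Q)\geqslant (q-1)\bigl[q\epsilon(1+\delta)-2\delta\bigr]|\eta|^{q-2}I_2\geqslant (q-1)(q\epsilon-2\delta)|\eta|^{q-2}I_2.
$$

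To close the argument, I will invoke the defining relation $\delta=2q(q-1)\epsilon/85$, which reads $(q-1)q\epsilon=85\delta/2$, and thereby rewrite the right-hand side as $\delta[85/2-2(q-1)]|\eta|^{q-2}I_2$. Since $p>2$ forces $q\leqslant 2$, we have $q-1\leqslant 1$, so $85/2-2(q-1)\geqslant 81/2$, yielding the stated estimate. The main obstacle is really bookkeeping: one must identify correctly the sign ``$-\phi$'' (rather than ``$\phi$'') in the bottom-right block of $\cR_\phi$ and track signs through the substitution \eqref{eq: D3}. The very point of the specific normalisation of $\delta$ in \eqref{eq: def delta} is precisely to ensure that the negative cross contribution $-2(q-1)|\zeta|^2|\eta|^{-q}I_2$ generated by \eqref{eq: D3} is dominated by the positive pieces with the quantitative margin $81/2$.
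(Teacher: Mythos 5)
Your proposal is correct and uses essentially the same ingredients as the paper: the block identities $\cH_3(G)-\tan\phi\,\cI_3(G)=q|\eta|^{q-2}\cD_{q,-\phi}(\eta)$ and $\cH_3(H)-\tan\phi\,\cI_3(H)=(2-q)|\zeta|^2|\eta|^{-q}\cD_{2-q,-\phi}(\eta)$, the identities \eqref{eq: D2}--\eqref{eq: D3}, Lemma~\ref{l: bakry}, the bound $|\zeta|^2|\eta|^{-q}\leq|\eta|^{q-2}$ in $\Omega_b$, and the choice of $\delta$. The only difference is organizational: the paper splits into two cases according to the sign of $\sk{\cD_{2-q,-\phi}(\eta)\omega_2}{\omega_2}_{\R^2}$, whereas you apply \eqref{eq: D3} as an exact identity up front and then bound $\cD_{q,-\phi}(\eta)\geq\epsilon(q-1)I_2$, which lands on the same quantity $(q-1)[q\epsilon(1+\delta)-2\delta]$ as the paper's second case and makes the case distinction unnecessary.
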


\begin{proof}
We start by writing
\begin{align}
(\cH_3(G)-\tan\phi\cdot\cI_3(G))(\zeta,\eta)&=q|\eta|^{q-2}\cD_{q,-\phi}(\eta)\,,\label{eq: 5}\\
\left(\cH_3(H)-\tan\phi\cdot\cI_3(H)\right)(\zeta,\eta)&=(2-q)|\zeta|^2|\eta|^{-q} \cD_{2-q,-\phi}(\eta)\,.\nonumber
\end{align}
Since the determinant of the matrix $\cD_{2-q,-\phi}(\eta)$ is negative, we consider two cases.

\medskip
{\bf Case 1:} Suppose that $\sk{\cD_{2-q,-\phi}(\eta)\omega_2}{\omega_2}_{\R^2}\geqslant 0$. Then from \eqref{eq: 18}, \eqref{eq: 5} and \eqref{eq: D2} we get
\begin{align*}
\sk{(\cH_3(Q)&-\tan\phi\cdot\cI_3(Q))(\zeta,\eta)\omega_2}{\omega_2}_{\R^2}\geqslant \sk{(\cH_3(G)-\tan\phi\cdot\cI_3(G))(\zeta,\eta)\omega_2}{\omega_2}_{\R^2}\\
&= q(q-1)\epsilon|\eta|^{q-2}|\omega_2|^2+q[1-\epsilon(q-1)]|\eta|^{q-2}\Sk{\cD_{q_\e,-\phi}(\eta)\omega_2}{\omega_2}_{\R^2}\,.
\end{align*}
The required estimate now follows since the last term in the right-hand side is nonnegative by Lemma~\ref{l: bakry},
because $|\phi|\leqslant\phi_{p_\e}=\phi_{q_\e}$.

\medskip
{\bf Case 2:} Suppose that $\Sk{\cD_{2-q,-\phi}(\eta)\omega_2}{\omega_2}_{\R^2}\leqslant 0$.
Since in $\Omega_b$ we have $|\zeta|^2<|\eta|^{2q-2}$, identities \eqref{eq: 18}, \eqref{eq: 5} imply
$$
\aligned
\Sk{(\cH_3(Q)-\tan\phi\cdot\cI_3 (Q))(\zeta,\eta)\omega_2}{\omega_2}_{\R^2} &\\
& \hskip -100pt \geq|\eta|^{q-2}\left(q\Sk{\cD_{q,-\phi}(\eta)\omega_2}{\omega_2}_{\R^2}+\delta(2-q)\Sk{\cD_{2-q,-\phi}(\eta)\omega_2}{\omega_2}_{\R^2}\right) \,.
\endaligned
$$
From \eqref{eq: D3} we get
$$
q\cD_{q,-\phi}(\eta)+\delta(2-q)\cD_{2-q,-\phi}(\eta)=q(\delta+1)\cD_{q,-\phi}(\eta)-2\delta(q-1)I_2\,.
$$
By Lemma~\ref{l: bakry} the quadratic form $\cD_{q_\epsilon,\-\phi}(\eta)$ is nonnegative, because $|\phi|\leqslant\phi_{p_\e}=\phi_{q_\e}$. Therefore, it follows from \eqref{eq: D2} and from the definition of $\delta$ \eqref{eq: def delta} that
\begin{align*}
\Sk{(\cH_3(Q)-\tan\phi\cdot\cI_3 (Q))(\zeta,\eta)\omega_2}{\omega_2}_{\R^2}
 &\geq \left(q(q-1)\epsilon(\delta+1)-2\delta(q-1)\right)|\eta|^{q-2}|\omega_2|^2\\
 &\geq \frac{81}{2}\delta|\eta|^{q-2}|\omega_2|^2\,,
\end{align*}
which concludes the proof of the lemma.
\end{proof}

\begin{proposition}
\label{p: 2}
For all $\xi\in\Omega_b$, $\omega=(\omega_1,\omega_2)\in\R^2\times\R^2$ and $\phi\in[-\phi_{p_\e},\phi_{p_\e}]$ we have
$$
\cR_{\phi}(Q)[\xi;\omega]
\geqslant\, 2\delta\cos\phi\,|\omega_1||\omega_2|
\,.
$$
\end{proposition}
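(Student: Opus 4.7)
The plan is to simply assemble the three block-wise lower bounds supplied by Lemmas~\ref{l: 3}, \ref{l: 2} and \ref{l: 4}, plugging them into the decomposition \eqref{eq: split in blocks} of $\cR_\phi(Q)[\xi;\omega]$, and then close the loop by a one-variable arithmetic inequality (completing the square / AM--GM). Since the three lemmas already require $|\phi|\leq\phi_{p_\e}$, the hypothesis on $\phi$ is exactly what is needed. No further convexity or geometric input will be invoked at this stage; the whole work went into the preceding three lemmas.

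First I would fix $\xi=(\zeta,\eta)\in\Omega_b$ and $\omega=(\omega_1,\omega_2)\in\R^2\times\R^2$ and divide through \eqref{eq: split in blocks} by $\cos\phi>0$. Applying the three block estimates to the three summands yields
\begin{equation*}
(\cos\phi)^{-1}\cR_\phi(Q)[\xi;\omega]\;\geq\;2\delta\,|\eta|^{2-q}|\omega_1|^2\;-\;16\delta\,|\omega_1||\omega_2|\;+\;\tfrac{81}{2}\delta\,|\eta|^{q-2}|\omega_2|^2.
\end{equation*}
The factor $16$ in the middle term comes from the factor $2$ in \eqref{eq: split in blocks} multiplying the bound $-8\delta|\omega_1||\omega_2|$ of Lemma~\ref{l: 2}, and the lemmas contribute the $\eta$-weights so that the extremes are dual to each other.

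Second, I would absorb the $2\delta|\omega_1||\omega_2|$ required on the right-hand side of the proposition into the cross-term and check that what remains is nonnegative. Introducing the abbreviations $u=|\eta|^{(2-q)/2}|\omega_1|$ and $v=|\eta|^{(q-2)/2}|\omega_2|$, so that $uv=|\omega_1||\omega_2|$, the desired inequality $(\cos\phi)^{-1}\cR_\phi(Q)[\xi;\omega]\geq 2\delta\,|\omega_1||\omega_2|$ reduces to
\begin{equation*}
2u^2-18uv+\tfrac{81}{2}v^2\;\geq\;0,
\end{equation*}
and the left-hand side equals $2\bigl(u-\tfrac{9}{2}v\bigr)^2$, which is manifestly nonnegative. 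Multiplying back by $\cos\phi$ gives the claim.

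The main (and only) obstacle here is bookkeeping: one must check that the constants $2$, $8$ and $81/2$ produced by Lemmas~\ref{l: 3}--\ref{l: 4} fit together, that is, that the discriminant $18^2-4\cdot 2\cdot(81/2)=324-324=0$ is nonpositive. This is precisely the reason for the particular value $\delta=2q(q-1)\e/85$ chosen in \eqref{eq: def delta}: in the proof of Lemma~\ref{l: 4} the factor $81/2$ was produced by the estimate $q(q-1)\e(1+\delta)-2\delta(q-1)\geq (81/2)\delta$, and here one sees how $81/2$ conspires with $2$ and $18$ to make the quadratic form a perfect square. No further work is required.
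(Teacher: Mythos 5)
Your proposal is correct and follows exactly the paper's route: the paper's proof of Proposition~\ref{p: 2} is precisely to combine \eqref{eq: split in blocks} with Lemmas~\ref{l: 3}, \ref{l: 2} and \ref{l: 4}, and your explicit completion-of-the-square $2u^2-18uv+\tfrac{81}{2}v^2=2\bigl(u-\tfrac92 v\bigr)^2\geq 0$ is just the arithmetic the paper leaves implicit (and which matches the discussion in Remark~\ref{r: reasons for delta} on the choice of $\delta$). Nothing is missing.
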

\begin{proof}
Combine \eqref{eq: split in blocks} with Lemmas~\ref{l: 3}, \ref{l: 2} and \ref{l: 4}.
\end{proof}

\begin{remark}
\label{r: reasons for delta}
Let us explain why $\delta$ was chosen in \eqref{eq: def delta} the way it was.

By taking for $\delta$ a number of the form $Dq(q-1)\e$ for some $\e>0$ to be determined, the proofs of Lemmas
\ref{l: 3} and \ref{l: 2} would not change at all, while in Lemma \ref{l: 4} we would get
$
\langle\,\cdot\,\rangle\geq(1/D-2)\delta|\eta|^{q-2}|\omega_2|^2\,,
$
and consequently in Proposition \ref{p: 2} we would get
$$
\cR_{\phi}(Q)[\xi;\omega]
\geqslant\, (\underbrace{2\sqrt{2/D-4}-16}_{(*)})
\delta\cos\phi\,|\omega_1||\omega_2|
\,.
$$
If we want $(*)$ to equal 2, we must choose $D=2/85$.
\end{remark}

\section{Proof of Theorem~\ref{t: sycn bil embedding}}\label{s: proof bilinear}
Fix $p\in(2,\infty)$, $\epsilon\in (0,1/2)$ and define $\delta$ and $Q$ as in \eqref{eq: def delta} and \eqref{eq: def bellman}. In order to prove the bilinear embedding we apply the heat-flow method that we described in Section~\ref{s: heat-flow}.

The pointwise estimates of $Q$ and its partial derivatives (see Proposition~\ref{p: 3}) in particular imply that $Q$ satisfies \eqref{es: 1} and \eqref{es: 2}, so that the associated flow $\cE$ is regular on $L^p(\Omega,\nu)\times L^q(\Omega,\nu)$. Moreover, $\cE$ satisfies \eqref{eq: heatflow p2} with $A_0=1+\delta$, for all $\phi\in[-\phi_{p_\e},\phi_{p_\e}]$. Therefore, what is left to prove is that $\cE$ satisfies \eqref{eq: heatflow p1} or, equivalently, that $\cA$ satisfies \eqref{eq: heatflow p1 bis} for an appropriate $B_0>0$. In this section (see Corollary~\ref{c: sycn 1}) we shall prove that $\eqref{eq: heatflow p1 bis}$ holds with $B_0=2\delta\cos\phi$, for any $\phi\in[-\phi_{p_\e},\phi_{p_\e}]$; Theorem~\ref{t: sycn bil embedding} will follow immediately.\\

We first consider the special case when $\cA$ is the heat generator on the two-point space, i.e.,
$$
\cA=\cG=\left[ {\begin{array}{rr}
 1& -1\\
-1& 1
 \end{array} } \right]
$$
acting on $\C^2=L^\infty(\{a,b\}, \nu_{a,b})$, where  $\nu_{a,b}=(\delta_{a}+\delta_{b})/2$. It is easy to see that $\cG$ generates a symmetric contraction semigroup on $(\{a,b\},\nu_{a,b})$, e.g. \cite[Example 1]{KR}.
\begin{proposition}
\label{l: two-point}
For all $\phi\in [-\phi_{p_\epsilon},\phi_{p_\epsilon}]$ and every $f,g:\{a,b\}\rightarrow\C$,
\begin{equation*}
2\delta\cos\phi \mod{\int_{\{a,b\}}\cG f\cdot \overline{g}\wrt\nu_{a,b}}
\leq
2\Re\int_{\{a,b\}} \left(e^{i\phi}(\partial_\zeta Q)(f,g)\cG f+ e^{-i\phi}(\partial_\eta Q)(f,g)\cG g\right)\wrt\nu_{a,b}.
\end{equation*}
\end{proposition}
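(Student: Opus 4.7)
The key observation is that on the two-point space everything reduces to a finite-difference identity, and the desired inequality will follow directly from Corollary~\ref{l: sycn final stima} applied at a single pair of points. Write $f_a = f(a)$, $f_b = f(b)$, $g_a = g(a)$, $g_b = g(b)$. Since
$$
(\cG f)(a) = f_a - f_b, \qquad (\cG f)(b) = -(f_a - f_b),
$$
a direct computation gives
$$
\int_{\{a,b\}} \cG f \cdot \overline{g}\wrt\nu_{a,b} = \tfrac{1}{2}(f_a - f_b)(\overline{g_a - g_b}),
$$
so, with $\alpha := f_a - f_b$ and $\beta := g_a - g_b$, the left-hand side becomes $\delta\cos\phi\,|\alpha||\beta|$.

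\textbf{Main step.} Carrying out the analogous computation on the right-hand side, one finds after pairing the $a$- and $b$-contributions that
$$
2\Re\int_{\{a,b\}}\Bigl(e^{i\phi}(\partial_\zeta Q)(f,g)\cG f + e^{-i\phi}(\partial_\eta Q)(f,g)\cG g\Bigr)\wrt\nu_{a,b}
= \tfrac{1}{2}\bigl[F_{\alpha,\beta}(f_a,g_a) - F_{\alpha,\beta}(f_b,g_b)\bigr],
$$
where $F_{\alpha,\beta}$ is the function introduced in \eqref{eq: final stima}. This is the crucial recognition: the discrete ``integration by parts'' on the two-point space simply produces a finite difference of $F_{\alpha,\beta}$ evaluated at the two points $(f_a,g_a)$ and $(f_b,g_b)=(\alpha+f_b - \alpha, \beta + g_b - \beta)$, which is exactly the configuration covered by Corollary~\ref{l: sycn final stima} (take $\zeta = f_b$, $\eta = g_b$, so that $\alpha+\zeta = f_a$ and $\beta+\eta = g_a$).

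\textbf{Conclusion.} Applying Corollary~\ref{l: sycn final stima} to these choices yields
$$
F_{\alpha,\beta}(f_a,g_a) - F_{\alpha,\beta}(f_b,g_b) \geq 2\delta\cos\phi\,|\alpha||\beta|,
$$
and dividing by $2$ gives precisely the desired lower bound $\delta\cos\phi\,|\alpha||\beta|$ on the right-hand side of the proposition, matching the closed form of the left-hand side computed above. The only non-routine ingredient is the convexity-type inequality of Corollary~\ref{l: sycn final stima}, which in turn rests on Theorem~\ref{t: convex}; everything else is bookkeeping on a two-element space. I expect no real obstacle beyond carefully tracking signs and the factor of $\tfrac{1}{2}$ coming from the measure $\nu_{a,b} = (\delta_a + \delta_b)/2$.
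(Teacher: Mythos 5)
Your proof is correct and follows essentially the same route as the paper: both reduce the statement to the two-point identity $\int_{\{a,b\}}\cG u\cdot v\wrt\nu_{a,b}=\tfrac12(u(a)-u(b))(v(a)-v(b))$ and then apply Corollary~\ref{l: sycn final stima} with $\alpha=f(a)-f(b)$, $\beta=g(a)-g(b)$, $\zeta=f(b)$, $\eta=g(b)$. The only difference is that you spell out explicitly the bookkeeping identifying the right-hand side with $\tfrac12[F_{\alpha,\beta}(f(a),g(a))-F_{\alpha,\beta}(f(b),g(b))]$, which the paper leaves implicit.
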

\begin{proof}
Since for every $u,v:\{a,b\}\rightarrow \C$ one has
$$
\int_{\{a,b\}}\cG u\cdot v\wrt\nu_{a,b}=\frac{1}{2}\left(u(a)-u(b)\right)\left(v(a)-v(b)\right),
$$
the proposition follows from Corollary~\ref{l: sycn final stima} applied with $\alpha=f(a)-f(b)$, $\beta=g(a)-g(b)$, $\zeta=f(b)$ and $\eta=g(b)$.
\end{proof}

\subsection*{The general case}
The idea of proving \eqref{eq: heatflow p1 bis} in the general case is to verify first a version of \eqref{eq: heatflow p1 bis} with $\cA$ replaced by $(I-T_t)/t$, for all $t>0$. This is the content of Proposition \ref{p: 4} below. In order to take advantage of the Bellman function's properties, we suitably decompose $I-T_t$ into a sum of two operators which we then tame separately.
At the end we pass to the limit as $t\rightarrow0$.

Before proceeding to Proposition \ref{p: 4} we need a ``representation formula'' for symmetric contractions (see Lemma~\ref{l: kernel rep.} below), which is a refined version of that used in \cite[Theorem 3.9]{O} for proving Proposition~\ref{t: 1} in the case of sub-Markovian semigroups. For the purpose of stating the representation formula we introduce a bit of notation and recall a few known results concerning the linear modulus of an operator and the Gelfand transform.

\begin{definition}
A bounded self-adjoint operator $T$ on $L^2(\Omega,\nu)$ is said to be a {\it symmetric contraction} on $(\Omega,\nu)$ if
for all $p\in [1,\infty]$ and $f\in L^2(\Omega,\nu)\cap L^p(\Omega,\nu)$,
\begin{equation*}
\|Tf\|_p\leq \norm{f}{p}\,.
\end{equation*}
We say $T$ is {\it sub-Markovian} if, in addition, $Tf\geq 0$ whenever $f\in L^2(\Omega,\nu)$ and $f\geq 0$. A sub-Markovian operator $T$ such that $T1=1$ is called {\it Markovian}.
\end{definition}

Following \cite{CK} and \cite{KRE}, denote by $\cP$ the family of all finite measurable partitions of $\Omega$, partially ordered in the usual way, i.e., $\pi\leq \pi'$ if and only if $\pi'$ is a refinement of $\pi$. If $T$ is a symmetric contraction on $(\Omega,\nu)$, $\pi=\{B_1,\dots,B_n\}$, and $f\in L^1(\Omega,\nu)$, set
$$
\mathbf{T}_\pi f=\sum^n_{j=1}|T(f\chi_{B_j})|.
$$
Next result was proven in \cite{CK} and \cite[Lemma 3.4]{Tag}, see also \cite[Theorems 4.1.2, 4.1.3]{KRE}.
\begin{lemma}
\label{l: sycn cont}
Suppose that $T$ is a symmetric contraction on $(\Omega,\nu)$. Then there exists a unique sub-Markovian operator $\mathbf{T}$ on $(\Omega,\nu)$ with the following properties:
\begin{enumerate}[{\rm(a)}]
\item
the operator norms of $T$ and $\mathbf{T}$ on $L^1(\Omega,\nu)$ are equal;
\item
\label{Streng}
$|Tf|\leq \mathbf{T}|f|$ whenever $f\in L^r(\Omega,\nu)$ and $1\leq r\leq \infty$.
\end{enumerate}
When $f\in L^1(\Omega,\nu)$ and $f\geq 0$, one has that
\begin{enumerate}[{\rm(a)}]
\addtocounter{enumi}{2}
\item
$\mathbf{T}f = \sup\Mn{|Tg|}{g \in L^1(\Omega,\nu), |g|\leq f}$;
\item
\label{eq: ggg}
$
\mathbf{T}f=\lim_{\pi\in\cP}\mathbf{T}_\pi f
$
in $L^1(\Omega,\nu)$.
\end{enumerate}
\end{lemma}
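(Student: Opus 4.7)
My plan is to build $\mathbf{T}$ as the Chacon--Krengel linear modulus of $T$: first realize it as the monotone $L^1$-limit of the partition operators $\mathbf{T}_\pi$ on $L^1_+$, then extend by linearity. For $f\in L^1(\Omega,\nu)$ nonnegative and $\pi'$ refining $\pi\in\cP$ by splitting each $B_j$ into $B_j=\bigsqcup_k C_{j,k}$, the triangle inequality $|T(f\chi_{B_j})|=|\sum_k T(f\chi_{C_{j,k}})|\leq\sum_k|T(f\chi_{C_{j,k}})|$ shows the net $(\mathbf{T}_\pi f)_\pi$ is monotone under refinement. Since $T$ is an $L^1$-contraction,
$$
\|\mathbf{T}_\pi f\|_1\leq\sum_j\|T(f\chi_{B_j})\|_1\leq\|f\|_1,
$$
so by monotone convergence $\mathbf{T}_\pi f\uparrow \mathbf{T}f$ in $L^1$ and a.e., for some $\mathbf{T}f\in L^1_+$; this establishes~(d).

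Next I would derive the supremum formula (c). For $|g|\leq f$ in $L^1$, I would approximate $g$ in $L^1$ by step functions $g_\pi=\sum_j c_j\chi_{B_j}$ with $|c_j|$ bounded by the $\nu$-average of $f$ on $B_j$; the triangle inequality then gives $|Tg_\pi|\leq\sum_j|c_j||T\chi_{B_j}|\leq\mathbf{T}_\pi f$ after a rescaling, and passing to the limit yields $|Tg|\leq\mathbf{T}f$. Conversely, for a.e.\ $x$ and a fixed $\pi$, the sign choice $\epsilon_j=\operatorname{sgn}T(f\chi_{B_j})(x)\in\{\pm 1\}$ together with $g_x=\sum_j\epsilon_j f\chi_{B_j}$ gives $|g_x|=f$ and $Tg_x(x)=\mathbf{T}_\pi f(x)$; realising the essential supremum over a countable $L^1$-dense family of sign assignments yields $\mathbf{T}_\pi f\leq\esss\{|Tg|:|g|\leq f\}$, and sending $\pi\to\infty$ yields (c). From (c), $\mathbf{T}|_{L^1_+}$ is positively homogeneous, monotone, and additive (superadditivity is by independent sign-selection on disjoint supports), so it extends to a complex-linear operator on $L^1$ through $\mathbf{T}f=\mathbf{T}f_+-\mathbf{T}f_-$ followed by decomposition into real and imaginary parts. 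Property (b) is immediate from (c); positivity preservation is built in; $\|\mathbf{T}f\|_\infty\leq\|f\|_\infty$ for $f\geq 0$ follows from (c) with $g=f$; $\|\mathbf{T}\|_1\leq 1$ is the initial bound; and Riesz--Thorin interpolation covers $1<p<\infty$, yielding sub-Markovianity. Finally, (b) combined with $|\mathbf{T}g|\leq\mathbf{T}|g|$ gives $\|T\|_1\leq\|\mathbf{T}\|_1$, while the reverse inequality follows from $\|\mathbf{T}f\|_1=\lim_\pi\sum_j\|T(f\chi_{B_j})\|_1\leq\|T\|_1\|f\|_1$ for $f\geq 0$, yielding (a).

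Uniqueness within sub-Markovian operators satisfying (a)--(b) follows from the minimality encoded in (c): any competitor $\mathbf{T}'$ satisfies $|Tg|\leq\mathbf{T}'|g|\leq\mathbf{T}'f$ for every $|g|\leq f$ by (b) and the monotonicity of $\mathbf{T}'$, hence $\mathbf{T}'\geq\mathbf{T}$ on $L^1_+$; the operator-norm equality in (a), combined with the additivity of both operators on the positive cone and a density argument, then propagates to $\mathbf{T}'=\mathbf{T}$. The principal technical obstacle is the sign-selection half of (c), together with the related superadditivity step in the linear extension: the phases $\epsilon_j(x)$ depend measurably on the evaluation point, so one must formulate the relevant suprema as essential suprema over countable $L^1$-dense families of sign functions and invoke a measurable-selection argument in order to bridge between the pointwise identities and the $L^1$-level inequalities.
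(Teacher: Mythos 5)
You should first note that the paper does not prove this lemma at all: it is quoted from Chacon--Krengel \cite{CK}, Taggart \cite[Lemma 3.4]{Tag} and Krengel \cite[Theorems 4.1.2, 4.1.3]{KRE}, so your proposal is being measured against the standard construction in those references rather than against an argument in the text. Your general plan (partition operators, monotone limit, supremum formula, linear extension, interpolation) is the right one, but the two central steps do not work as written. The upper-bound half of (c) rests on the chain $|Tg_\pi|\leq\sum_j|c_j|\,|T\chi_{B_j}|\leq\mathbf{T}_\pi f$, whose second inequality amounts to the positivity-type domination $|T(c_j\chi_{B_j})|\leq|T(f\chi_{B_j})|$ for $|c_j|\chi_{B_j}\leq f\chi_{B_j}$. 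A general symmetric contraction is not positivity preserving, and this comparison is simply false: there are symmetric contractions whose kernel contains a nonnegative, nonconstant function $f\chi_{B_j}$ while $T\chi_{B_j}\neq0$, and then the right-hand side vanishes while the left does not; no rescaling repairs this, and assuming such a domination is circular, since it is precisely what the linear modulus is designed to supply. In Chacon--Krengel the inequality $|Tg|\leq\mathbf{T}f$ for $|g|\leq f$ is obtained only \emph{after} one knows that $\mathbf{T}$ is additive, hence monotone, on the positive cone, via $|Tg|\leq\mathbf{T}|g|\leq\mathbf{T}f$; and additivity is proved by defining $\mathbf{T}f$ as the supremum of $\sum_i|Tg_i|$ over finite decompositions with $\sum_i|g_i|\leq f$ (an upward-directed family, by the Riesz decomposition property), not over single test functions. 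This is exactly where your second gap sits: the parenthetical ``superadditivity is by independent sign-selection on disjoint supports'' assumes disjointness that $f_1$ and $f_2$ do not have, and a pointwise-varying sign pattern does not correspond to a single admissible $g$; passing from pointwise phase choices back to test functions requires the finite sign/phase-pattern partition argument (phases, not just $\pm1$, since the functions here are complex valued). You flag this as the ``principal technical obstacle'', but it is the heart of the proof and is left undone, and the route you sketch around it is the part that fails.

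There are further genuine defects. The $L^\infty$ bound does not follow ``from (c) with $g=f$'' --- that choice only gives the lower bound $\mathbf{T}f\geq|Tf|$; what is needed is that \emph{every} competitor satisfies $\|Tg\|_\infty\leq\|g\|_\infty\leq\|f\|_\infty$, so the lattice supremum does too. The uniqueness argument is invalid: domination $\mathbf{T}'\geq\mathbf{T}$ on $L^1_+$ together with equality of $L^1$ operator norms does not force $\mathbf{T}'=\mathbf{T}$ (on the two-point space take $T=\mathrm{diag}(1,0)$, whose modulus is $T$ itself, while $\mathbf{T}'=I$ is sub-Markovian, dominates it and has the same $L^1$ norm and satisfies (b)); uniqueness has to be read as uniqueness of the operator with all the listed properties, in which case formula (c) settles it immediately and the norm-propagation step should be dropped. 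Finally, with the paper's definition a sub-Markovian operator is in particular a \emph{symmetric} contraction, i.e.\ self-adjoint on $L^2$, so you would also have to check that the linear modulus of a self-adjoint $T$ is self-adjoint; your outline never addresses this, nor the (standard, but worth a sentence) passage from an increasing net indexed by partitions to an a.e.\ and $L^1$ limit.
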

\noindent
The operator $\mathbf{T}$ is called {\it linear modulus} of $T$.

\subsection*{Gelfand theory}
Let us, for the reader's convenience, briefly summarize a few basic facts about the Gelfand theory we will need in the continuation. They are taken from \cite[Chapter 1]{TAKE} and \cite[Chapter 11]{RU2}.

Suppose that $\nu(\Omega)<\infty$. Denote by $\wh{\Omega}$ the maximal ideal space of the commutative unital $C^*$-algebra $L^\infty(\Omega,\nu)$. The Gelfand isomorphism $\cF: L^\infty(\Omega,\nu)\rightarrow C(\wh{\Omega})$ is given by $(\cF f)(x)=x(f)$. We have the identities
$\cF(fg)=\cF f\cdot \cF g$, $\overline{\cF f}=\cF\bar{f}$
and $\cF(|f|^r)=|\cF f|^r$ for all $r\in [1,\infty)$.
We will often write $\cF f=\hat f$ and $\cF^{-1}\f =\check{\f}$.
Since $\wh{\Omega}$ is a compact Hausdorff space, by the Riesz representation theorem, the measure $\nu$ is transported to a positive Radon measure $\hat{\nu}$ on $\wh{\Omega}$ such that
\begin{equation}
\label{eq: gelfand}
\int_{\Omega}f\wrt \nu=\int_{\wh{\Omega}}\hat f\wrt \hat{\nu}\,
\end{equation}
for all $f\in L^\infty(\Omega,\nu)$.
Moreover, every $f\in L^\infty(\wh{\Omega},\hat{\nu})$ has a representative in $C(\wh{\Omega})$, so that $L^\infty(\wh{\Omega},\hat{\nu})$ and $C(\wh{\Omega})$ coincide as Banach spaces. It follows that, if $T$ is a symmetric contraction on $(\Omega,\nu)$, with $\nu(\Omega)<\infty$, then $\cF T\cF^{-1}$ extends to a symmetric contraction on $(\wh{\Omega},\hat{\nu})$ which leaves $C(\wh{\Omega})$ invariant.

If $f$ and $g$ belong to $L^\infty(\Omega,\nu)$, then we denote by $\wh{f}\otimes \wh{g}$ the continuous function on $\wh{\Omega}\times\wh{\Omega}$ mapping  $(x,y)\mapsto \wh{f}(x)\wh{g}(y)$. We denote by $E$ the subspace of $C(\wh{\Omega}\times\wh{\Omega})$ comprising all real functions of the form $h=\sum_{j=1}^n\wh{f}_j\otimes {\wh{g}_j^{}}$. If $h\in E $ and $y\in\wh\Omega$, then we set $h_y=h(\cdot,y)$. We say that a complex measure $\mu$ on $\widehat\Omega\times\widehat\Omega$ is {\it symmetric} if $\wrt\mu(y,x)=\wrt\overline\mu(x,y)$.

\begin{lemma}
\label{l: kernel rep.}
Suppose that $\nu(\Omega)<\infty$. Then for every symmetric contraction $T$ on $(\Omega,\nu)$ there exists a complex symmetric Radon measure $m_T$ on $\wh{\Omega}\times\wh{\Omega}$ with the following properties:
\begin{itemize}
\item[{\rm (i)}]
for all $f,g\in L^\infty(\Omega,\nu)$,
$$
\sk{Tf}{g}
=\int_{\wh{\Omega}\times\wh{\Omega}}
\hat f\otimes\overline{\hat g}\wrt m_T
\,;
$$
\item[{\rm (ii)}]
if $\mathbf{T}$ denotes the linear modulus of $T$, then $m_{\mathbf{T}}$ coincides with the total variation of $m_T$. In particular, for all $f,g\in L^\infty(\Omega,\nu)$,
$$
\sk{\mathbf{T}f}{g}
=\int_{\wh{\Omega}\times\wh{\Omega}}
\hat f\otimes\overline{\hat g}\wrt |m_T|\,.
$$
\end{itemize}
\end{lemma}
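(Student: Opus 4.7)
My plan is to build $m_T$ as a weak-$*$ limit of finite-rank measures attached to finite measurable partitions of $\Omega$. First I would transfer the problem to $\wh\Omega$ via Gelfand. Since $\nu(\Omega)<\infty$, we have $L^\infty(\Omega,\nu)\hookrightarrow L^1(\Omega,\nu)$, so the sesquilinear form $(f,g)\mapsto\sk{Tf}{g}$ is jointly continuous on $L^\infty\times L^\infty$. For every $\pi=\{B_1,\ldots,B_n\}\in\cP$ the clopen sets $\wh{B}_j$ partition $\wh\Omega$, and I would put
$$
m_T^\pi=\sum_{i,j}\frac{\sk{T\chi_{B_i}}{\chi_{B_j}}}{\nu(B_i)\nu(B_j)}\,\wh\nu|_{\wh{B}_i}\otimes\wh\nu|_{\wh{B}_j}.
$$
Using \eqref{eq: gelfand} one checks immediately that, for all $f,g\in L^\infty(\Omega,\nu)$,
$$
\int_{\wh\Omega\times\wh\Omega}\hat f\otimes\overline{\hat g}\wrt m_T^\pi=\sk{TE_\pi f}{E_\pi g},
$$
where $E_\pi$ is the conditional expectation onto the $\sigma$-algebra generated by $\pi$.

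Next I would pass to the limit along the directed set $\cP$. Property (b) of Lemma~\ref{l: sycn cont} yields $\mod{\sk{T\chi_{B_i}}{\chi_{B_j}}}\leqslant\sk{\mathbf{T}\chi_{B_i}}{\chi_{B_j}}$, so the total variation of $m_T^\pi$ is bounded by $\sk{\mathbf{T}{\mathbf 1}}{{\mathbf 1}}\leqslant\nu(\Omega)$, uniformly in $\pi$. Approximation by simple functions shows $E_\pi f\to f$ in $L^2$, so the functional $\Phi\mapsto\int\Phi\wrt m_T^\pi$ converges on the algebraic tensor product $C(\wh\Omega)\otimes C(\wh\Omega)$; combined with Stone--Weierstrass density in $C(\wh\Omega\times\wh\Omega)$ and the uniform total-variation bound, it converges on all of $C(\wh\Omega\times\wh\Omega)$. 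Riesz representation produces $m_T$, and (i) holds by construction. The symmetry of $m_T$ is read off $\sk{Tf}{g}=\overline{\sk{Tg}{f}}$ applied to tensor products of step functions.

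For part (ii) I would repeat the construction with $\mathbf T$ in place of $T$, producing a positive Radon measure $m_{\mathbf T}$ that satisfies the analogue of (i). The pointwise bound $|m_T^\pi|\leqslant m_{\mathbf T}^\pi$ (rectangle by rectangle) combines with lower semicontinuity of total variation under weak-$*$ limits to yield $|m_T|\leqslant m_{\mathbf T}$. For the reverse inequality, given nonnegative $f,g\in L^\infty(\Omega,\nu)$, I would choose measurable unimodular $\epsilon_{j,\pi}$ with $\epsilon_{j,\pi}\cdot T(f\chi_{B_j})=|T(f\chi_{B_j})|$, whence
$$
\sk{\mathbf{T}_\pi f}{g}=\sum_j\sk{T(f\chi_{B_j})}{\overline{\epsilon_{j,\pi}}\,g}=\int_{\wh\Omega\times\wh\Omega}\hat f(x)\hat g(y)\,\Phi_\pi(x,y)\wrt m_T(x,y),
$$
where $\Phi_\pi(x,y)=\sum_j\chi_{\wh{B}_j}(x)\overline{\wh{\epsilon_{j,\pi}}(y)}$ lies in $C(\wh\Omega\times\wh\Omega)$ with modulus $1$. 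Each such right-hand side is at most $\int\hat f\otimes\hat g\wrt|m_T|$, while the left-hand side converges to $\sk{\mathbf{T}f}{g}=\int\hat f\otimes\hat g\wrt m_{\mathbf T}$ by property (d) of Lemma~\ref{l: sycn cont}. This delivers $m_{\mathbf T}\leqslant|m_T|$ on tensors of nonnegative functions, hence on all nonnegative continuous functions by density. The delicate point, and the expected main obstacle, is exactly this last step: the phase functions $\Phi_\pi$ enjoy no useful compactness on $\wh\Omega\times\wh\Omega$, and the argument sidesteps this by taking the limit only on the left-hand side of the identity above.
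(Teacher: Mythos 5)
Your proof is correct, but it is organized differently from the paper's in two places. For part (i) the paper does not construct $m_T$ by hand: it invokes (a modification of) Fukushima's kernel-representation lemma \cite[Lemma 1.4.1]{F} (see also \cite[pp. 90--91]{O}) and omits the details, whereas you build $m_T$ explicitly as a weak-$*$ limit of the partition measures $m_T^\pi$, using the uniform total-variation bound $\|m_T^\pi\|\leq\sk{\mathbf T1}{1}\leq\nu(\Omega)$ and convergence of $\sk{TE_\pi f}{E_\pi g}$ on elementary tensors; this is a legitimate, self-contained alternative (just discard blocks with $\nu(B_i)=0$, and note that the "lower semicontinuity" step is cleanest phrased as: for every $\Psi\in C(\wh\Omega\times\wh\Omega)$, $\mod{\int\Psi\wrt m_T}=\lim_\pi\mod{\int\Psi\wrt m_T^\pi}\leq\limsup_\pi\int\mod{\Psi}\wrt m_{\mathbf T}^\pi=\int\mod{\Psi}\wrt m_{\mathbf T}$, which gives $|m_T|\leq m_{\mathbf T}$). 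For that same inequality the paper argues at the level of the limit objects instead: it uses the pointwise domination $\mod{\cF T\cF^{-1}\varphi}\leq\cF\mathbf T\cF^{-1}\mod{\varphi}$ coming from Lemma~\ref{l: sycn cont}~\eqref{Streng}, applied to the sections $h_x$ of nonnegative $h\in E$, and then removes the sign restriction by approximating $|t|$ with polynomials; your discrete rectangle-by-rectangle domination $|m_T^\pi|\leq m_{\mathbf T}^\pi$ buys the same conclusion with less machinery, at the price of having set up the partition construction first. The reverse inequality $m_{\mathbf T}\leq|m_T|$ is proved in essentially the same way in both arguments (partition, unimodular phases $\overline{T(f\chi_{B_j})}/|T(f\chi_{B_j})|$, part (i), and a limit/approximation in $\pi$ only on the left-hand side, which is indeed all that is needed since the right-hand bound $\int\hat f\otimes\hat g\wrt|m_T|$ is independent of $\pi$). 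One small tightening at the very end: from your two inequalities you get equality of $\int\hat f\otimes\hat g\wrt m_{\mathbf T}$ and $\int\hat f\otimes\hat g\wrt|m_T|$ for $f,g\geq0$, and you should conclude $m_{\mathbf T}=|m_T|$ by extending this equality linearly to the span of such tensors (which is dense in $C(\wh\Omega\times\wh\Omega;\R)$), rather than by approximating general nonnegative continuous functions from within the cone, which is not directly available.
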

\begin{proof}
Part {\rm (i)} can be proven by appropriately modifying the proof of \cite[Lemma 1.4.1]{F} (see also \cite[pp. 90--91]{O}). We omit the details.

Now let us turn towards (ii). By Lemma~\ref{l: sycn cont} \eqref{Streng} and the identity $|\cF \varphi|=\cF|\varphi|$, for all $\varphi\in C(\wh{\Omega})$ and every $x\in \wh{\Omega}$ one has that
\begin{equation}
\label{eq: 17}
\mod{(\cF T\cF^{-1}\varphi)(x)}
\leq
(\cF\mathbf{T}\cF^{-1} |\varphi|)(x)\,.
\end{equation}
Let $h=\sum_{j=1}^n\wh{f}_j\otimes {\wh{g}_j^{}}\in E$. Suppose first that $h\geq 0$. Then it follows from item (i), \eqref{eq: gelfand} and \eqref{eq: 17} that

\begin{align}
\label{eq: sycn 5}
\mod{\int_{\wh\Omega\times\wh\Omega}h\,\wrt m_T}
&=\bigg|\int_{\wh\Omega}\sum^n_{j=1}(\cF T\cF^{-1}\wh{f}_j)(x){\wh{g}_j}(x)\wrt\hat{\nu}(x)\bigg| \nonumber \\
&=\mod{\int_{\wh\Omega}(\cF T\cF^{-1}h_x)(x)\wrt\hat{\nu}(x)} \nonumber \\
&\leq \int_{\wh\Omega}\sum^n_{j=1}(\cF \mathbf{T}\cF^{-1}h_x)(x)\wrt\hat{\nu}(x)\\
&= \int_{\wh\Omega\times\wh\Omega}h\, \wrt m_{\mathbf{T}}\nonumber\,.
\end{align}

Suppose now that $h$ is real-valued, but not necessarily of constant sign.
Following \cite[p. 91]{O}, consider a sequence of polynomials $P_n$ such that $P_n(t)\geq 0$ for all $t\in [-1,1]$ and $P_n(t)\rightarrow |t|$ uniformly for $t\in[-1,1]$. In particular, for all $n\in\N$ we have $P_n(h/\|h\|_\infty)\geq 0$. Therefore, by \eqref{eq: sycn 5},
\begin{align*}
\frac{1}{\|h\|_\infty}\mod{\int_{\wh\Omega\times\wh\Omega}|h|\wrt m_T}&=\lim_{n\rightarrow\infty}\mod{\int_{\wh\Omega\times\wh\Omega}P_n(h/\|h\|_\infty)\wrt m_T}\\
&\leq \lim_{n\rightarrow\infty}\int_{\wh\Omega\times\wh\Omega}P_n(h/\|h\|_\infty)\wrt m_{\mathbf{T}}\\
&=\frac{1}{\|h\|_\infty}\int_{\wh\Omega\times\wh\Omega}|h|\wrt m_{\mathbf{T}}.
\end{align*}
Since $E$ is dense in $C(\wh\Omega\times\wh\Omega;\R)$, we obtain
\begin{align}\label{eq: sycn 4}
|m_T|\leq m_{\mathbf{T}}.
\end{align}

Fix $f,g\in L^\infty(\Omega,\nu)$, $f,g\geq 0$, and $\epsilon>0$. According to Lemma~\ref{l: sycn cont} \eqref{eq: ggg}, there exists a partition $\pi=\{B_1,\dots,B_n\}$ of $\Omega$ such that
\begin{equation*}
\langle\mathbf{T}f,g\rangle\leq \sum^n_{j=1}\int_\Omega|T(\chi_{B_j} f)|g\wrt\nu+\epsilon\,.
\end{equation*}
From item ${\rm (i)}$ and the fact that $\cF$ is an isomorphism of algebras we get
\begin{equation*}
\aligned
\int_\Omega\sum^n_{j=1}|T(\chi_{B_j}f)|g\wrt\nu
&=\int_\Omega \sum^n_{j=1}T(\chi_{B_j}f)\frac{\overline{T(\chi_{B_j}f)}}{|T(\chi_{B_j}f)|}g\wrt\nu\\
&=\int_{\wh\Omega\times\wh\Omega}\sum^n_{j=1}\cF(\chi_{B_j})(x)\cF(f)(x)\cF\left(
\frac{\overline{T(\chi_{B_j}f)}}{|T(\chi_{B_j}f)|}g\right)(y)\wrt m_T(x,y)\\
&\leq \int_{\wh\Omega\times\wh\Omega}\cF\left(\sum^n_{j=1}\chi_{B_j}\right)(x)\cF(f)(x)\cF(g)(y)\wrt|m_T|(x,y)\\
&=\int_{\wh\Omega\times\wh\Omega}
\hat f\otimes\hat g\wrt|m_T|.
\endaligned
\end{equation*}
It follows that for all nonnegative $f,g\in L^\infty(\Omega,\nu)$,
\begin{equation}\label{eq: 14}
\int_{\wh\Omega\times\wh\Omega}\hat f\otimes\hat g\wrt m_{\mathbf{T}}=\langle\mathbf{T}f, g\rangle  \leq \int_{\wh\Omega\times\wh\Omega}\hat f \otimes\hat g\, \wrt|m_T|\,.
\end{equation}
By combining \eqref{eq: sycn 4} with \eqref{eq: 14}, we obtain
$$
\int_{\wh\Omega\times\wh\Omega}h \wrt m_{\mathbf{T}} =\int_{\wh\Omega\times\wh\Omega}h \wrt|m_T|
$$
for all $h\in E$, and the density of $E$ in $C(\wh\Omega\times\wh\Omega;\R)$ implies item ${\rm (ii)}$.

The symmetry of $m_T$ follows from the operator $T$ being self-adjoint.
\end{proof}

\begin{proposition}
\label{p: 4}
Let $p\in (2,\infty)$ and $\epsilon\in (0,1/2)$. Define $\delta$ and $Q$ as in \eqref{eq: def delta} and \eqref{eq: def bellman} respectively.
Suppose that $T$ is a symmetric contraction on a $\sigma$-finite measure space $(\Omega,\nu)$. Then, for every $\phi\in [-\phi_{p_\epsilon},\phi_{p_\epsilon}]$, $f\in L^p(\Omega,\nu)$ and $g\in L^q(\Omega,\nu)$,
\begin{align*}
2\delta\cos\phi
&  \mod{\int_{\Omega}(I-T)(f) \overline{g}\wrt\nu} \\
& \leqslant 2\Re\int_\Omega \left[e^{i\phi}(\partial_\zeta Q)(f,g)(I-T)(f)+ e^{-i\phi}(\partial_\eta Q)(f,g)(I-T)(g)\right]\wrt\nu\,.
\end{align*}
\end{proposition}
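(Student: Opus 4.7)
The plan is to combine the Gelfand kernel representation of Lemma~\ref{l: kernel rep.} with a pointwise application of Corollary~\ref{l: sycn final stima}, integrated against the positive measure $|m_T|$. First I shall reduce to the case $\nu(\Omega)<\infty$: by $\sigma$-finiteness I write $\Omega=\bigcup_n\Omega_n$ with $\nu(\Omega_n)<\infty$ increasing, restrict $f$ and $g$ to bounded functions supported in some $\Omega_n$, and recover the general statement by density in $L^p\times L^q$ and dominated convergence. Assuming now $\nu(\Omega)<\infty$, Lemma~\ref{l: kernel rep.} provides the symmetric complex Radon measure $m_T$ on $\widehat\Omega\times\widehat\Omega$, together with the companion identity $\langle \mathbf{T}u,v\rangle=\int\hat u(x)\overline{\hat v(y)}\,d|m_T|$ for the sub-Markovian linear modulus $\mathbf{T}$.

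Next I will apply Corollary~\ref{l: sycn final stima} pointwise on $\widehat\Omega\times\widehat\Omega$, with the choices $\alpha=\hat f(x)-\hat f(y)$, $\beta=\hat g(x)-\hat g(y)$, $\zeta=\hat f(y)$, $\eta=\hat g(y)$. This produces
$$F_{\alpha,\beta}\bigl(\hat f(x),\hat g(x)\bigr)-F_{\alpha,\beta}\bigl(\hat f(y),\hat g(y)\bigr)\;\geq\;2\delta\cos\phi\,|\hat f(x)-\hat f(y)||\hat g(x)-\hat g(y)|$$
at every point. Because $F_{-\alpha,-\beta}=-F_{\alpha,\beta}$, the left-hand side is invariant under the swap $(x,y)\leftrightarrow(y,x)$, and $|m_T|$ itself is swap-symmetric (from the symmetry of $m_T$). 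Integrating against $|m_T|$ and unfolding each of the four bilinear pairings via Lemma~\ref{l: kernel rep.}(ii), the Gelfand calculus $\widehat{uv}=\hat u\hat v$, and the self-adjointness of $\mathbf{T}$ (which follows from swap-symmetry plus reality of $|m_T|$), the left-hand side reduces to
$$4\,\Re\int_\Omega\Bigl[e^{i\phi}(\partial_\zeta Q)(f,g)\bigl(f\,\mathbf{T}1-\mathbf{T}f\bigr)+e^{-i\phi}(\partial_\eta Q)(f,g)\bigl(g\,\mathbf{T}1-\mathbf{T}g\bigr)\Bigr]d\nu.$$
For the right-hand side, the triangle inequality together with $dm_T=\sigma\,d|m_T|$ (unimodular $\sigma$) gives
$$\int|\hat f(x)-\hat f(y)||\hat g(x)-\hat g(y)|\,d|m_T|\;\geq\;\Bigl|\int(\hat f(x)-\hat f(y))\bigl(\overline{\hat g(x)}-\overline{\hat g(y)}\bigr)\,dm_T\Bigr|,$$
and a direct expansion of the four cross-terms (using symmetry of $m_T$, self-adjointness of $T$, and the fact that the marginals of $m_T$ equal $\widehat{T1}\,d\hat\nu$) identifies the right-hand side with $2|\int_\Omega(fT1-Tf)\bar g\,d\nu|$.

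To finish, I will write $(I-T)f=(fT1-Tf)+f(1-T1)$, and analogously $(I-\mathbf{T})f=(f\mathbf{T}1-\mathbf{T}f)+f(1-\mathbf{T}1)$. The estimate just obtained differs from the target inequality by residual terms involving the pointwise multiplication operators $h\mapsto h(1-T1)$ and $h\mapsto h(1-\mathbf{T}1)$; for a multiplication operator, the desired Bellman inequality reduces to a one-point evaluation of Corollary~\ref{l: sycn final stima} in the spirit of Proposition~\ref{l: two-point}, and this finishes the argument. The main obstacle is precisely this last reconciliation: integration against the positive measure $|m_T|$ naturally produces a bilinear estimate for the linear modulus $\mathbf{T}$ with a ``Markovian defect'' $1-T1$, and one must use the symmetry of $m_T$ and the phase $\sigma$, together with a careful direct treatment of the residual multiplication terms, to close the inequality with $T$ and $(I-T)$ only.
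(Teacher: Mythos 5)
Your overall strategy---reduce to $\nu(\Omega)<\infty$, represent $T$ through Lemma~\ref{l: kernel rep.}, and integrate the convexity estimate of Corollary~\ref{l: sycn final stima} against $|m_T|$---is the same as the paper's, but there is a genuine gap precisely at the point you yourself call ``the main obstacle''. If you apply Corollary~\ref{l: sycn final stima} with $\alpha=\hat f(x)-\hat f(y)$, $\beta=\hat g(x)-\hat g(y)$, $\zeta=\hat f(y)$, $\eta=\hat g(y)$, i.e.\ with no phase, then unfolding the integrals via Lemma~\ref{l: kernel rep.}\,(ii) gives on the left exactly what you wrote: $4\Re\int_\Omega\left[e^{i\phi}(\partial_\zeta Q)(f,g)(f\,\mathbf{T}1-\mathbf{T}f)+e^{-i\phi}(\partial_\eta Q)(f,g)(g\,\mathbf{T}1-\mathbf{T}g)\right]d\nu$, in which the \emph{linear modulus} $\mathbf{T}$, not $T$, acts on $f$ and $g$. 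The discrepancy between this and the quantity required in Proposition~\ref{p: 4}, namely $2\Re\int_\Omega\left[e^{i\phi}(\partial_\zeta Q)(f,g)(I-T)f+e^{-i\phi}(\partial_\eta Q)(f,g)(I-T)g\right]d\nu$, is \emph{not} made of multiplication operators only: besides the harmless terms $f(1-\mathbf{T}1)$ and $g(1-\mathbf{T}1)$ it contains $2\Re\int_\Omega e^{i\phi}(\partial_\zeta Q)(f,g)(\mathbf{T}f-Tf)\,d\nu$ and its analogue in $g$. The operator $\mathbf{T}-T$ is genuinely nonlocal (its ``kernel'' is $(1-k)\,d|m_T|$, where $dm_T=k\,d|m_T|$ is the polar decomposition), it has no sign, and nothing in your scheme controls it; a one-point evaluation in the spirit of Proposition~\ref{l: two-point} only disposes of the multiplication pieces, so the final reconciliation as described cannot be carried out. (Your identification of the lower-bound side with $2\,|\int_\Omega(f\,T1-Tf)\bar g\,d\nu|$ is fine; the problem is entirely on the gradient side.)

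The paper closes exactly this hole by inserting the phase where it matters: inside the arguments of the Bellman derivatives. One splits $I-T=(1-\mathbf{T}(1))I+(\mathbf{T}(1)I-T)$; the first summand is a nonnegative multiplication operator, handled by the pointwise bounds $2(\partial_\zeta Q)(f,g)f\geq p|f|^p$ and $2(\partial_\eta Q)(f,g)g\geq q|g|^q$. For the second summand one symmetrizes in $(x,y)$ and applies Corollary~\ref{l: sycn final stima} with the rotated data $\alpha=\hat f(y)-\hat f(x)k(x,y)$, $\beta=\hat g(y)-\hat g(x)k(x,y)$, $\zeta=\hat f(x)k(x,y)$, $\eta=\hat g(x)k(x,y)$, exploiting the invariance $\overline{w}\,(\partial_\zeta Q)(\zeta,\eta)=(\partial_\zeta Q)(w\zeta,w\eta)$ for $|w|=1$. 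With this choice the unfolded gradient terms come out with $\mathbf{T}(1)I-T$ (the modulus entering only through the multiplication $\mathbf{T}(1)$, while the operator hitting $f$ and $g$ is $T$ itself), yielding the bound by $2\delta\cos\phi\,|\langle(\mathbf{T}(1)I-T)f,g\rangle|$, which combines with the first piece by the triangle inequality. This phase trick is the missing idea in your argument; without it the integrated estimate you obtain is a correct inequality, but for the wrong operator.
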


\begin{proof}
First we show that it suffices to prove the proposition in the case when $f,g$ are bounded and supported in sets of finite measure. Indeed, suppose that $f\in L^p(\Omega,\nu)$ and $g\in L^q(\Omega,\nu)$ and consider sequences $(s_n)$ and $(t_n)$ of simple functions supported in sets of finite measure, converging a.e. to $f$ and $g$, respectively, and such that $|s_n|\leq |f|$, $|t_n|\leq |g|$.
By Proposition~\ref{p: 3}, $|(\partial_\zeta Q)(s_n,t_n)| \leqsim \max\{|f|^{p-1},|g|\}\in L^q(\Omega,\nu)$
and $|(\partial_\eta Q)(s_n,t_n)|\leqsim|g|^{q-1} \in L^p(\Omega,\nu)$.
Consequently, assuming that the proposition holds for all pairs $s_n,t_n$, the dominated convergence theorem and Lemma~\ref{l: sycn cont} \eqref{Streng} show that it then also holds for $f,g$.

As in \cite[proof of Theorem 3.9]{O}, we may further assume
that $\nu(\Omega)<\infty$. Indeed, if this is not the case, then we can replace $\Omega$ with $\Omega_0=({\rm supp}\,f)\cup({\rm supp}\,g)$
and $T$ with $\chi_{\Omega_0}T\chi_{\Omega_0}$, which is a symmetric contraction on $(\Omega_0,\nu)$.

Therefore, from now on we assume that $\nu(\Omega)<\infty$ and $f,g\in L^\infty(\Omega,\nu)$.\\

Recall that $\mathbf{T}$ denotes the linear modulus of $T$. We begin by splitting
\begin{align*}
2\Re\int_\Omega \left[e^{i\phi}(\partial_\zeta Q)(f,g)(I-T)(f)+ e^{-i\phi}(\partial_\eta Q)(f,g)(I-T)(g)\right]\wrt\nu
=I_1+I_2\,,
\end{align*}
where
\begin{align*}
&I_1=\int_\Omega(1-\mathbf{T}(1))2\Re\left[e^{i\phi}(\partial_\zeta Q)(f,g)f+ e^{-i\phi}(\partial_\eta Q)(f,g)g\right]\wrt\nu,  \\
&I_2=\int_\Omega  2\Re\left[e^{i\phi}(\partial_\zeta Q)(f,g)(\mathbf{T}(1)I-T)f+ e^{-i\phi}(\partial_\eta Q)(f,g)(\mathbf{T}(1)I-T)g\right]\wrt\nu\,.
\end{align*}
We first estimate $I_1$. An easy computation shows that
$$
2(\partial_\zeta Q)(f,g)f\geqslant p|f|^p\hskip 30pt  {\rm and}\hskip 30pt   2(\partial_\eta Q)(f,g)g\geqslant q|g|^q.
$$
Since $1-\mathbf{T}(1)\geqslant 0$
and $p|f|^p+q|g|^q\geqslant p^{1/p}q^{1/q}|f||g|\geqslant 2\delta |f||g|$, it follows that
\begin{align}
\label{eq: sycn final 1}
I_1\geqslant \int_\Omega(1-\mathbf{T}(1))\cos\phi(p|f|^p+q|g|^q)\wrt\nu \geq 2\delta\cos\phi \mod{\int_\Omega(1-\mathbf{T}(1))f\overline{g}\wrt\nu}.
\end{align}
We now estimate $I_2$. Write
\begin{equation}
\label{eq: sycn final 3}
I_2=\Re\left(e^{i\phi}K_1+e^{-i\phi}K_2\right)\,,
\end{equation}
where
\begin{equation*}
K_1=2\int_\Omega  \left[\mathbf{T}(1)f-Tf\right](\partial_\zeta Q)(f,g)\wrt\nu
\hskip 10pt
{\rm and}
\hskip 10pt
K_2=2\int_\Omega  \left[\mathbf{T}(1)g-Tg\right](\partial_\eta Q)(f,g)\wrt\nu\,.
\end{equation*}
Let $m_T$ and $m_\mathbf{T}$ be the two measures given by  Lemma~\ref{l: kernel rep.}. By the polar decomposition, there exists a measurable function $k:\wh\Omega\times \wh\Omega\rightarrow \C$ such that $|k(x,y)|=1$ for all $x,y\in\wh\Omega$ and
$\wrt m_{T} =k \wrt |m_T |=k \wrt m_{\mathbf{T}} \,.$
From the definition of $Q$ and the properties of the Gelfand isomorphism $\cF$, it follows that
$\cF[(\partial_\zeta Q)(f,g)]=(\partial_\zeta Q)(\hat f,\hat g)$. Hence Lemma~\ref{l: kernel rep.} implies that
$$
K_1=
2\int_{\wh\Omega\times\wh\Omega}
\left(\hat f(y)-\hat f(x)k(x,y)\right)\,
(\partial_\zeta Q)\left(\hat f(y),\hat g(y)\right)
\wrt |m_T|(x,y)\,.
$$
Since $m_T$ is symmetric, $|m_T|$ is symmetric and $k(y,x)=\overline{k(x,y)}$. Consequently,
$$
\aligned
K_1=&\hskip12,3pt\int_{\wh\Omega\times\wh\Omega}
\left(\hat f(y)-\hat f(x)k(x,y)\right)\,
(\partial_\zeta Q)\left(\hat f(y),\hat g(y)\right)
\wrt |m_T|(x,y)\\
&+\int_{\wh\Omega\times\wh\Omega}
\left(\hat f(x)-\hat f(y)\overline{k(x,y)}\right)\,
(\partial_\zeta Q)\left(\hat f(x),\hat g(x)\right)
\wrt |m_T|(x,y)\,.
\endaligned
$$
It follows from the very definition of $Q$ that for every $w\in \C$ with $|w|=1$,
$$
\aligned
\overline{w}\,(\partial_\zeta Q)(\zeta,\eta)&=(\partial_\zeta Q)(w\zeta,\eta)=(\partial_\zeta Q)(w \zeta,w \eta)\,.
\endaligned
$$
Since $|k(x,y)|=1$, we can continue with
\begin{align*}
K_1=\int_{\wh\Omega\times\wh\Omega} &\left[(\partial_\zeta Q)\left(\hat f(y),\hat g(y)\right)-(\partial_\zeta Q)\left(k(x,y)\hat f(x),k(x,y)\hat g(x)\right)\right]\\
&\times\left[\hat f(y)-\hat f(x)k(x,y)\right]\,\wrt |m_T|(x,y)\,.
\end{align*}
In the same way we derive the analogue of the identity above for $K_2$:
\begin{align*}
K_2=\int_{\wh\Omega\times\wh\Omega} &\left[(\partial_\eta Q)\left(\hat f(y),\hat g(y)\right)-(\partial_\eta Q)\left(k(x,y)\hat f(x),k(x,y)\hat g(x)\right)\right]\\
&\times\left[\hat g(y)-\hat g(x)k(x,y)\right]\,\wrt |m_T|(x,y)\,.
\end{align*}
At this point we use \eqref{eq: sycn final 3} and apply
Corollary~\ref{l: sycn final stima} with
$$
\aligned
\alpha& =\hat f(y)-\hat f(x)k(x,y) &\hskip 30pt
\zeta & =\hat f(x)k(x,y)\\
\beta & =\hat g(y)-\hat g(x)k(x,y) &
\eta & =\hat g(x)k(x,y)\,.
\endaligned
$$
It follows that
\begin{align*}
\frac{2 I_2}{2\delta\cos\phi}&\geq\int_{\wh\Omega\times\wh\Omega}\mod{\hat f(y)-\hat f(x)k(x,y)}\mod{\hat g(y)-\hat g(x)k(x,y)}\wrt|m_T|(x,y)\\
&\geq \mod{\int_{\wh\Omega\times\wh\Omega}\left(\hat f(y)-\hat f(x)k(x,y)\right)\left(\overline{\hat g(y)}-\overline{\hat g(x)}\overline{k(x,y)}\right)\wrt|m_T|(x,y) }\\
&=2\mod{\int_{\wh\Omega\times\wh\Omega}\hat f(y)\overline{\hat g(y)}\wrt|m_T|(x,y)-\int_{\wh\Omega\times\wh\Omega}\hat f (x)\overline{\hat g(y)}\wrt m_T(x,y) }\\
&=2\mod{\int_\Omega \mathbf{T}(1)f\overline{g}\wrt\nu-\int_\Omega T(f)\overline{g}\wrt\nu}.
\end{align*}
We proved that
\begin{align}
\label{eq: sycn final 2}
I_2\geq 2\delta\cos\phi
\mod{\int_\Omega \left(\mathbf{T}(1)I-T\right)(f)\overline{g}\wrt\nu}.
\end{align}
The proposition now follows by combining \eqref{eq: sycn final 1} with \eqref{eq: sycn final 2}.
\end{proof}

\begin{remark}
Observe that in the preceding proof the crucial property of $Q$, summarized in Theorem~\ref{t: convex}, was applied through Corollary~\ref{l: sycn final stima}, and was needed in order to estimate the term $I_2$. That term was also where the Gelfand transform and Lemma \ref{l: kernel rep.} were applied.
\end{remark}

\begin{remark}
Suppose that $\nu(\Omega)<\infty$ and that $T$ is Markovian. Then $\mathbf{T}=T$, the measure $m_T$ is positive and symmetric and Lemma~\ref{l: kernel rep.} gives
$$
\int_\Omega(I-T)(u) v\wrt\nu=\int_{\wh\Omega\times\wh\Omega}\left(\int_{\{x,y\}}\cG\hat u\cdot\hat v\wrt\nu_{x,y}\right)\wrt m_T(x,y),
$$
for all $u,v\in L^\infty(\Omega,\nu)$. Therefore, in this case, Proposition~\ref{p: 4} immediately follows from Proposition~\ref{l: two-point}.
\end{remark}

As discussed at the beginning of this section, Theorem \ref{t: sycn bil embedding} (bilinear embedding) will follow once we confirm the next corollary.

\begin{corollary}
\label{c: sycn 1}
Let $p\in (2,\infty)$ and $\epsilon\in (0,1/2)$. Define $\delta$ and $Q$ as in \eqref{eq: def delta} and \eqref{eq: def bellman}, respectively. For all $f\in{\rm D}(\cA_p)$, $g\in {\rm D}(\cA_{q})$ and every $\phi\in [-\phi_{p_\epsilon},\phi_{p_\epsilon}]$, inequality \eqref{eq: heatflow p1 bis} holds with $B_0=2\delta\cos\phi$.
\end{corollary}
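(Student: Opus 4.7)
The plan is to apply Proposition~\ref{p: 4} to the symmetric contractions $T_t=\exp(-t\cA)$ for each $t>0$, divide the resulting inequality by $t$, and let $t\to 0^+$. Since $(T_t)_{t>0}$ is a symmetric contraction semigroup, each $T_t$ satisfies the hypotheses of Proposition~\ref{p: 4}, so for every $f\in L^p(\Omega,\nu)$ and $g\in L^q(\Omega,\nu)$,
$$
2\delta\cos\phi\,\left|\Sk{\tfrac{I-T_t}{t}f}{g}\right|\le 2\Re\int_\Omega\!\left[e^{i\phi}(\partial_\zeta Q)(f,g)\,\tfrac{(I-T_t)f}{t}+e^{-i\phi}(\partial_\eta Q)(f,g)\,\tfrac{(I-T_t)g}{t}\right]\wrt\nu.
$$

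For $f\in D(\cA_p)$ and $g\in D(\cA_q)$, the definition of the generator gives
$$
\lim_{t\to 0^+}\frac{(I-T_t)f}{t}=\cA f\quad\text{in }L^p(\Omega,\nu),\qquad \lim_{t\to 0^+}\frac{(I-T_t)g}{t}=\cA g\quad\text{in }L^q(\Omega,\nu).
$$
The left-hand side therefore converges to $2\delta\cos\phi\,|\langle\cA f,g\rangle|$ by continuity of the $L^p$--$L^q$ pairing. For the right-hand side we invoke the pointwise bounds of Proposition~\ref{p: 3}: there exist absolute constants $C_1,C_2$ such that
$$
|(\partial_\zeta Q)(f,g)|\le C_1\max\{|f|^{p-1},|g|\}\in L^q(\Omega,\nu),\qquad |(\partial_\eta Q)(f,g)|\le C_2|g|^{q-1}\in L^p(\Omega,\nu).
$$
Hence H\"older's inequality applied to each of the two integrals shows that passing to the limit $t\to 0^+$ is legitimate, yielding precisely the inequality \eqref{eq: heatflow p1 bis} with $B_0=2\delta\cos\phi$.

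There is no genuine obstacle beyond this limiting argument; the substantive content of the corollary has already been packaged into Proposition~\ref{p: 4} through the Bellman-function convexity (Theorem~\ref{t: convex}) and the Gelfand-theoretic representation (Lemma~\ref{l: kernel rep.}). Once Corollary~\ref{c: sycn 1} is established, Theorem~\ref{t: sycn bil embedding} follows immediately: the flow $\cE(t)=\int_\Omega Q(T_{te^{i\phi}}f,T_{te^{-i\phi}}g)\wrt\nu$ is regular on $L^p\times L^q$ by the growth estimates of Proposition~\ref{p: 3} together with Proposition~\ref{t: 1}, its initial value satisfies $\cE(0)\le(1+\delta)(\|f\|_p^p+\|g\|_q^q)$, and the quantitative monotonicity \eqref{eq: heatflow p1} with $B_0=2\delta\cos\phi$ is exactly the content of Corollary~\ref{c: sycn 1} applied to $T_{te^{i\phi}}f$ and $T_{te^{-i\phi}}g$; integrating in $t$, rescaling $(f,g)\mapsto(\lambda f,g/\lambda)$ and optimising in $\lambda$ produces the stated constant $30(p-1)/(\epsilon\cos\phi)$ after inserting $\delta=2q(q-1)\epsilon/85$.
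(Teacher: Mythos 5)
Your proposal is correct and follows exactly the route of the paper's own (very terse) proof: apply Proposition~\ref{p: 4} with $T=T_t$, divide by $t$, and pass to the limit using that $(I-T_t)f/t\to\cA f$ in $L^p$, $(I-T_t)g/t\to\cA g$ in $L^q$, and that $(\partial_\zeta Q)(f,g)\in L^q$, $(\partial_\eta Q)(f,g)\in L^p$ by Proposition~\ref{p: 3}. The only cosmetic slip is calling the constants in the gradient bounds ``absolute'' (they depend on $p$ and $\delta$), which is immaterial since they are independent of $t$.
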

\begin{proof}
Since $(T_t)_{t>0}$ is a symmetric contraction semigroup and, by Proposition~\ref{p: 3}, $(\partial_\zeta Q)(f,g)\in L^q(\Omega,\nu)$ and $(\partial_\eta Q)(f,g)\in L^p(\Omega,\nu)$, the corollary follows from applying Proposition~\ref{p: 4} with $T_t$ in place of $T$,
dividing by $t$ both sides of the ensuing inequality and passing to the limit as $t\rightarrow0$.
\end{proof}

\begin{remark}
In order to apply the heat-flow argument of Section~\ref{s: heat-flow} we used Proposition~\ref{t: 1}.
However, Corollary~\ref{c: sycn 1} immediately gives a different proof of this result. Indeed, by Corollary~\ref{c: sycn 1} applied with $g=0$,
$$
 \Re \int_\Omega e^{i\phi}(\partial_\zeta Q)(f,0)\cA f\wrt\nu\geq 0
$$
for all $f\in {\rm D}(\cA_p)$ and $\phi\in[-\phi_{p_\epsilon},\phi_{p_\epsilon}]$. Since $(\partial_\zeta Q)(\zeta,0)=(p+2\delta)\overline{\zeta}|\zeta|^{p-2}$ for any $\zeta\in\C$, and $\phi_{p_\epsilon}\rightarrow \phi_p$ as $\epsilon\rightarrow0$, the inequality above can be rewritten as the sectorial inequality
$$
\mod{\Im\int_\Omega \cA_pf\cdot\overline{f}|f|^{p-2}\wrt\nu}\leq \cot\phi_p\cdot\Re\int_\Omega \cA_pf\cdot\overline{f}|f|^{p-2}\wrt\nu\,,
$$
which is well known to be equivalent to Proposition~\ref{t: 1}.\hfill\qed
\end{remark}

\subsection*{Acknowledgements}
The first author was partially supported by the Gruppo Nazionale per l'Analisi Matematica, la Probabilit\`a e le loro Applicazioni (GNAMPA) of the Istituto Nazionale di Alta Matematica (INdAM). The second author was partially supported by the Ministry of Higher Education, Science and Technology of Slovenia (research program Analysis and Geometry, contract no. P1-0291).
A part of this paper was written during the first author's visit at Faculty of Mathematics and Physics, University of Ljubljana, and during the second author's visit at Dipartimento di matematica, University of Genova. The authors wish to thank the said institutions for their kind hospitality.

\end{document}